\newtheorem{theo}{Theorem}[section]
\newtheorem{lemm}[theo]{Lemma}
\newtheorem{prop}[theo]{Proposition}
\newtheorem{cor}[theo]{Corollary}
\theoremstyle{definition}
\newtheorem{defi}[theo]{Definition}
\newtheorem{rem}[theo]{Remark}
\newtheorem{exam}[theo]{Example}
\newcommand{\bP}{\mathbb{P}}
\newcommand{\bE}{\mathbb{E}}
\newcommand{\bF}{\mathbb{F}}
\newcommand{\bR}{\mathbb{R}}
\newcommand{\bN}{\mathbb{N}}
\newcommand{\cF}{\mathcal{F}}
\newcommand{\cB}{\mathcal{B}}
\newcommand{\cM}{\mathcal{M}}
\newcommand{\cK}{\mathcal{K}}
\newcommand{\cJ}{\mathcal{J}}
\newcommand{\cV}{\mathcal{V}}
\newcommand{\fF}{\mathfrak{F}}
\newcommand{\fW}{\mathfrak{W}}
\newcommand{\bfF}{\overline{\mathfrak{F}}}
\newcommand{\bfW}{\overline{\mathfrak{W}}}
\newcommand{\rd}{\,\mathrm{d}}
\newcommand{\op}{\mathrm{op}}
\newcommand{\ep}{\varepsilon}
\newcommand{\1}{\mbox{\rm{1}}\hspace{-0.25em}\mbox{\rm{l}}}
\newcommand{\CD}{^\mathrm{C}\!D^\alpha_{0+}}
\newcommand{\knorm}{|\hspace{-0.2mm}|\hspace{-0.2mm}|}
\newcommand{\Bknorm}{\Big|\hspace{-0.2mm}\Big|\hspace{-0.2mm}\Big|}
\newcommand{\bstar}{\overset{\leftarrow}{\star}}
\newcommand{\bast}{\overset{\leftarrow}{\ast}}
\providecommand{\keywords}[1]{\textbf{Keywords:} #1}
\def\widebar{\accentset{{\cc@style\underline{\mskip10mu}}}}
\numberwithin{equation}{section}
\def\rnum#1{\expandafter{\romannumeral #1}} 
\def\Rnum#1{\uppercase\expandafter{\romannumeral #1}}
\title{Variation of constants formulae for forward and backward\\stochastic Volterra integral equations}
\author{
	Yushi Hamaguchi\thanks{
	Graduate School of Engineering Science, Department of Systems Innovation,
	Osaka University.
	1-3, Machikaneyama, Toyonaka, Osaka, Japan.
	\href{mailto:hmgch2950@gmail.com}{hmgch2950@gmail.com}}
}
\begin{document}
\maketitle

\begin{abstract}
In this paper, we provide variation of constants formulae for linear (forward) stochastic Volterra integral equations (SVIEs, for short) and linear Type-II backward stochastic Volterra integral equations (BSVIEs, for short) in the usual It\^{o}'s framework. For these purposes, we define suitable classes of stochastic Volterra kernels and introduce new notions of the products of adapted $L^2$-processes. Observing the algebraic properties of the products, we obtain the variation of constants formulae by means of the corresponding resolvent. Our framework includes SVIEs with singular kernels such as fractional stochastic differential equations. Also, our results can be applied to general classes of SVIEs and BSVIEs with infinitely many iterated stochastic integrals. The duality principle between generalized SVIEs and generalized BSVIEs is also proved.
\end{abstract}
\keywords
Stochastic Volterra integral equations; backward stochastic Volterra integral equations; variation of constants formulae.
\\
\textbf{2020 Mathematics Subject Classification}: 60H20; 45D05; 45A05; 26A33.



\tableofcontents


\section{Introduction}\label{section_Intro}

In this paper, we provide variation of constants formulae for linear stochastic Volterra integral equations (SVIEs, for short) and linear backward stochastic Volterra integral equations (BSVIEs, for short). First of all, consider the following SVIE:
\begin{equation}\label{Intro_eq_SVIE}
	X(t)=\varphi(t)+\int^t_0j(t,s)X(s)\rd s+\int^t_0k(t,s)X(s)\rd W(s),\ t\in(0,T),
\end{equation}
where $W$ is a one-dimensional Brownian motion, $j$ and $k$ are given deterministic functions called the kernels, and $\varphi$ is a given adapted process called the free term. If the kernels $j(t,s)$ and $k(t,s)$ do not depend on the first time-parameter $t$, and if the free term is of the form $\varphi(t)=x_0+\int^t_0b(s)\rd s+\int^t_0\sigma(s)\rd W(s)$ for some constant $x_0$ and adapted processes $b$ and $\sigma$, then SVIE~\eqref{Intro_eq_SVIE} is reduced to a linear stochastic differential equation (SDE, for short)
\begin{equation*}
	\begin{cases}
	\mathrm{d}X(t)=\{j(t)X(t)+b(t)\}\rd t+\{k(t)X(t)+\sigma(t)\}\rd W(t),\ t\in(0,T),\\
	X(0)=x_0.
	\end{cases}
\end{equation*}
More importantly, SVIE~\eqref{Intro_eq_SVIE} includes a class of fractional SDEs of the following form:
\begin{equation*}
	\begin{cases}\displaystyle
	\CD X(t)=j(t)X(t)+b(t)+\{k(t)X(t)+\sigma(t)\}\frac{\mathrm{d}W(t)}{\mathrm{d}t},\ t\in(0,T),\\
	X(0)=x_0,
	\end{cases}
\end{equation*}
where $\CD$ denotes the Caputo fractional differential operator of order $\alpha\in(\frac{1}{2},1]$. Indeed, by the definition, an adapted process $X$ is called a solution to the above fractional SDE if it satisfies the integral equation
\begin{equation*}
	X(t)=x_0+\frac{1}{\Gamma(\alpha)}\int^t_0(t-s)^{\alpha-1}\{j(s)X(s)+b(s)\}\rd s+\frac{1}{\Gamma(\alpha)}\int^t_0(t-s)^{\alpha-1}\{k(s)X(s)+\sigma(s)\}\rd W(s),\ t\in(0,T),
\end{equation*}
where $\Gamma(\alpha)$ is the Gamma function (see \cite{WaXuKl16}). Thus, the above fractional SDE can be seen as SVIE~\eqref{Intro_eq_SVIE} with the singular kernels 
\begin{equation*}
	j(t,s)=\frac{1}{\Gamma(\alpha)}(t-s)^{\alpha-1}j(s),\ k(t,s)=\frac{1}{\Gamma(\alpha)}(t-s)^{\alpha-1}k(s),
\end{equation*}
and the free term
\begin{equation*}
	\varphi(t)=x_0+\frac{1}{\Gamma(\alpha)}\int^t_0(t-s)^{\alpha-1}b(s)\rd s+\frac{1}{\Gamma(\alpha)}\int^t_0(t-s)^{\alpha-1}\sigma(s)\rd W(s).
\end{equation*}
Fractional differential systems are suitable tools to describe the dynamics of systems with memory effects and hereditary properties. There are many applications of fractional calculus in a variety of research fields including mathematical finance, physics, chemistry, biology, and other applied sciences. For detailed accounts of theory and applications of fractional calculus, see for example \cite{DaBa10,Di07,Ra10,SaKiMa87} and the references cited therein.

It is well-known that a variation of constants formula for deterministic Volterra integral equations is an important tool in many qualitative theory such as the linear perturbation theory (see Chapter 11 of \cite{GrLoSt90}), the invariant manifold theory (see \cite{DiGi84}), and the linear-quadratic optimal control theory (see \cite{HaLiYo22}). In the first half of this paper, we extend the variation of constants formula to the stochastic case \eqref{Intro_eq_SVIE}.

A difficulty to solve SVIE~\eqref{Intro_eq_SVIE} comes from the term $\int^t_0k(t,s)X(s)\rd W(s)$. In order to treat the stochastic convolution term, Berger and Mizel~\cite{BeMi82} introduced an extended stochastic integral for anticipated integrands, and gave a variation of constants formula in terms of the anticipated stochastic integral. Also, {\O}ksendal and Zhang~\cite{OkZh93,OkZh96} studied SVIE~\eqref{Intro_eq_SVIE}, where the free term $\varphi$ is not necessarily adapted, and the stochastic integral is taken in the Skorohod sense. They showed variation of constants formulae for \eqref{Intro_eq_SVIE} in extended frameworks of the functional process approach and the generalized white noise functional (Hida distribution) approach. Unfortunately, the frameworks of \cite{BeMi82,OkZh93,OkZh96} cannot be applied to fractional SDEs due to some strong regularity and boundedness assumptions on the kernels $j$ and $k$. For other studies on (nonlinear) SVIEs in the framework of anticipated calculus, see Berger and Mizel~\cite{BeMi80,BeMi80+}, Pardoux and Protter~\cite{PaPr90} and Al\`{o}s and Nualart~\cite{AlNu97}.

In this paper, we provide another viewpoint to solve SVIE~\eqref{Intro_eq_SVIE} in the classical It\^{o}'s framework without the use of anticipated stochastic integrals. Our idea is to regard SVIE~\eqref{Intro_eq_SVIE} as an algebraic equation in the space of adapted $L^2$-processes of the following form:
\begin{equation}\label{Intro_eq_SVIEalg}
	X=\varphi+J\ast X+K\star X.
\end{equation}
Here, the $\ast$-product $J\ast X$ with $J=j$ corresponds to the convolution $\int^t_0j(t,s)X(s)\rd s$ in terms of the Lebesgue integral, and the $\star$-product $K\star X$ with $K(t)=\int^t_0k(t,s)\rd W(s)$ corresponds to the stochastic convolution $\int^t_0k(t,s)X(s)\rd W(s)$ in terms of the stochastic integral. The $\star$-product for general adapted processes is defined by means of the Wiener--It\^{o} chaos expansion (see \cref{star_defi_starprod}), and it has a representation by infinitely many iterated stochastic integrals in the classical It\^{o}'s sense (see \cref{star_prop_integral}). We introduce suitable classes of stochastic Volterra kernels $J$ and $K$ and show that these spaces become Banach algebras with respect to the $\ast$- and the $\star$-products, respectively (see \cref{star_prop_Banachalg} and \cref{ast_prop_Banachalg}). Then, we show that the solution to the linear equation \eqref{Intro_eq_SVIEalg} is given by the variation of constants formula
\begin{equation*}
	X=\varphi+Q\ast\varphi+R\star\varphi,
\end{equation*}
where $Q$ and $R$ are resolvents of $J$ and $K$ in terms of the $\ast$- and the $\star$-products (see \cref{fVCF_theo_VCF}). The main advantages of our frameworks are three fold. First, we do not need any anticipated calculus in the above expressions, and all stochastic integrals are understood in the usual It\^{o}'s sense. Second, our framework includes linear SVIEs with singular kernels such as fractional SDEs; in \cref{fVCF_exam_FBS}, we apply our general results to a time-fractional version of the Black--Scholes SDE and give an explicit expression of the solution. Third, we can apply our analysis to a more general class of SVIEs with (infinitely many) iterated stochastic integrals (see equation \eqref{SVIE_eq_GSVIE}), which is beyond the framework of \cite{BeMi82,OkZh93,OkZh96}. This class of SVIEs includes fractional SDEs with delay and ``noisy memory'' (see Example~\ref{SVIE_exam_FSDEnoisy}).

Next, consider the following equations:
\begin{equation}\label{Intro_eq_BSVIE1}
	Y(t)=\tilde{\psi}(t)+\int^T_t\{j(s,t)^\top Y(s)+k(s,t)^\top Z(t,s)\}\rd s-\int^T_tZ(t,s)\rd W(s),\ t\in(0,T),
\end{equation}
and
\begin{equation}\label{Intro_eq_BSVIE2}
	Y(t)=\tilde{\psi}(t)+\int^T_t\{j(s,t)^\top Y(s)+k(s,t)^\top Z(s,t)\}\rd s-\int^T_tZ(t,s)\rd W(s),\ t\in(0,T),
\end{equation}
where $j$ and $k$ are given deterministic kernels, and $\tilde{\psi}$ is a given (not necessarily adapted) stochastic process. Equations \eqref{Intro_eq_BSVIE1} and \eqref{Intro_eq_BSVIE2} are called linear Type-I and Type-II backward stochastic Volterra integral equations (BSVIEs, for short), respectively. The difference between Type-I and Type-II BSVIEs is the role of the time-parameters of $Z(t,s)$ and $Z(s,t)$ in the drivers. On the one hand, Type-I BSVIE \eqref{Intro_eq_BSVIE1} is an equation for a pair $(Y(\cdot),Z(\cdot,\cdot))$ with $Z(t,s)$ defined on $0<t<s<T$, and we call it an adapted solution to \eqref{Intro_eq_BSVIE1} if $Y$ is adapted, $Z(t,\cdot)=(Z(t,s))_{s\in(t,T)}$ is adapted for each $t\in(0,T)$, and the equality \eqref{Intro_eq_BSVIE1} holds for a.e.\ $t\in(0,T)$, a.s. Nonlinear Type-I BSVIEs were introduced by Lin~\cite{Li02} and Yong~\cite{Yo06,Yo08}. Recently, many authors have applied Type-I BSVIEs to stochastic control and mathematical finance where the time-inconsistency is taken into account. For example, time-inconsistent dynamic risk measures and time-inconsistent recursive utilities can be modeled by the solutions to Type-I BSVIEs (see \cite{Yo07,KrOv17,Ag19,WaSuYo21}). Time-inconsistent stochastic control problems related to Type-I BSVIEs were studied by Wang and Yong~\cite{WaYo21} and the author~\cite{Ha21}. On the other hand, Type-II BSVIE~\eqref{Intro_eq_BSVIE2} is an equation for $(Y(\cdot),Z(\cdot,\cdot))$ with $Z(t,s)$ defined on $(t,s)\in(0,T)^2$, and we call it an adapted M-solution to \eqref{Intro_eq_BSVIE2} if $Y$ is adapted, $Z(t,\cdot)=(Z(t,s))_{s\in(0,T)}$ is adapted for each $t\in(0,T)$, and the equality \eqref{Intro_eq_BSVIE2}, together with the relation $Y(t)=\bE[Y(t)]+\int^t_0Z(t,s)\rd W(s)$ hold for a.e.\ $t\in(0,T)$, a.s. Type-II BSVIEs and the concept of adapted M-solutions were first introduced by Yong~\cite{Yo08} and applied to the duality principle appearing in stochastic control problems for (forward) SVIEs. Since then, Type-II BSVIEs have been found important to study stochastic control problems of SVIEs (see \cite{ChYo07,ShWaYo13,ShWaYo15,WaTZh17,WaT18,WaT20,ShWeXi20,Ha21+,Ha21++}). Specifically, Chen and Yong~\cite{ChYo07} and Wang~\cite{WaT18} showed that the optimal controls for linear-quadratic control problems of SVIEs are characterized by using the adapted M-solutions of linear Type-II BSVIEs. See also our previous papers \cite{Ha21+,Ha21++} for the counterparts in the infinite horizon setting.

For linear Type-I BSVIEs, Hu and {\O}ksendal~\cite{HuOk19}, Wang, Yong and Zhang~\cite{WaYoZh20}, Ren, Coulibaly and Aman~\cite{ReCoAm21} and the author \cite{Ha21+} obtained explicit formulae of the solutions in different settings. However, due to the difficulty of the appearance of the term $Z(s,t)$ in the driver, explicit solutions of linear Type-II BSVIEs have not been obtained in the literature, except for some concrete examples. In the latter half of this paper, we focus on linear Type-II BSVIEs. We provide a variation of constants formula for a general class of linear Type-II BSVIEs and give explicit expressions of the adapted M-solutions. For this purpose, we introduce new notions of the products which we call the backward $\star$-product and the backward $\ast$-product for adapted processes (see \cref{bstar_defi_bstarprod} and \cref{bast_defi_bastprod}). Then, regarding a generalized class of linear Type-II BSVIEs as a simple algebraic equation including (iterated) martingale representation operations, we obtain the variation of constants formula (see \cref{bVCF_theo_VCF}). Furthermore, we provide a duality principle between linear SVIEs and linear Type-II BSVIEs with infinitely many iterated stochastic integrals, which extends the result of Yong~\cite{Yo08} to a more general framework (see \cref{bVCF_theo_duality}).

The established variation of constants formulae and the duality principle in this paper are natural extensions of the counterparts of deterministic Volterra integral equations (see \cite{GrLoSt90}) to the stochastic one. Our results have potential applications to many qualitative theories such as the linear perturbation theory, the invariant manifold theory, and the linear-quadratic optimal control theory for SVIEs. We leave these applications as the future research.

We summarize the contributions of this paper.
\begin{itemize}
\item
We introduce novel kinds of products for adapted $L^2$-processes and show their fundamental properties.
\item
We introduce and study the corresponding resolvents, which play crucial roles for solving SVIEs and BSVIEs.
\item
We prove variation of constants formulae for SVIEs and BSVIEs, which provide explicit expressions of the Wiener--It\^{o} chaos expansions of the solutions.
\item
We provide useful sufficient conditions for the existence of the resolvent with respect to the $\star$-product.
\end{itemize}
This paper is organized as follows. In \cref{section_Pre}, we recall the Wiener--It\^{o} chaos expansion and introduce the notion of deterministic Volterra kernels, which play fundamental roles in our study. \cref{section_SVIE} is concerned with linear SVIEs. In \cref{subsection_star} and \cref{subsection_ast}, we introduce the notion of the $\star$- and the $\ast$-products, respectively. Resolvents with respect to these products are studied in \cref{subsection_res}. Then, in \cref{subsection_fVCF}, we show the variation of constants formula for (generalized) SVIEs, together with an application to the fractional Black--Scholes SDE. \cref{section_BSVIE} is concerned with linear BSVIEs. Introducing the notions of the backward $\star$- and the backward $\ast$-products in \cref{subsection_bstar} and \cref{subsection_bast}, respectively, we show in \cref{subsection_bVCF} the variation of constants formula for (generalized) BSVIEs. Also, we show the duality principle between SVIEs and BSVIEs in the general framework. Lastly, in \cref{section_exist}, we study the existence of the resolvent with respect to the $\star$-product.

\subsection*{Notations}

$(\Omega,\cF,\bP)$ is a complete probability space, and $W$ is a one-dimensional Brownian motion on $(\Omega,\cF,\bP)$. For each $s\geq0$, $\bF^s=(\cF^s_t)_{t\geq s}$ denotes the $\bP$-augmentation of the filtration generated by $(W(t)-W(s))_{t\geq s}$. When $s=0$, we denote $\bF^0=(\cF^0_t)_{t\geq0}$ by $\bF=(\cF_t)_{t\geq0}$. $\bE[\cdot]$ denotes the expectation. For each $t\geq0$, $\bE_t[\cdot]:=\bE[\cdot|\cF_t]$ is the conditional expectation given by $\cF_t$.

For each $d_1,d_2\in\bN$, we denote the space of $(d_1\times d_2)$-matrices by $\bR^{d_1\times d_2}$. We define $\bR^{d_1}:=\bR^{d_1 \times 1}$, that is, each element of $\bR^{d_1}$ is understood as a column vector. For each $A\in\bR^{d_1\times d_2}$, $A^\top$ denotes the transpose of $A$. $|A|:=\sqrt{\mathrm{tr}(A A^\top)}$ denotes the Frobenius norm, and $|A|_\op$ denotes the operator norm of $A$ as an linear operator from $\bR^{d_2}$ to $\bR^{d_1}$ with respect to the Euclidean norms. For each $d\in\bN$, $I_d\in\bR^{d\times d}$ is the identity matrix. For each $n\in\bN$ and $0\leq S<T<\infty$, we denote by $\Delta_n(S,T)$ the set of $n$-tuples $(t_1,t_2,\mathalpha{\dots},t_n)$ such that $T>t_1>t_2>\mathalpha{\cdots}>t_n>S$. For each $t=(t_1,\mathalpha{\dots},t_n)\in\Delta_n(S,T)$, we sometimes use the notations $\max t:=t_1$ and $\min t:=t_n$. We note that $\Delta_1(S,T)=(S,T)$. Also, we define $\bN_0:=\bN\cup\{0\}$.

For each $0\leq S<T<\infty$, $d_1,d_2\in\bN$ and $n\in\bN$, we define the following spaces:
\begin{itemize}
\item
$L^2(\Delta_n(S,T);\bR^{d_1\times d_2})$: the set of $\bR^{d_1\times d_2}$-valued measurable (deterministic) functions $\xi$ on $\Delta_n(S,T)$ such that $\|\xi\|_{L^2(\Delta_n(S,T))}<\infty$, where
\begin{equation*}
	\|\xi\|_{L^2(\Delta_n(S,T))}:=\Bigl(\int^T_S\int^{t_1}_S\mathalpha{\cdots}\int^{t_{n-1}}_S|\xi(t_1,t_2,\mathalpha{\dots},t_n)|^2\rd t_n\mathalpha{\cdots}\rd t_1\Bigr)^{1/2}.
\end{equation*}
\item
$L^2_\cF(\Delta_n(S,T);\bR^{d_1\times d_2})$: the set of $\bR^{d_1\times d_2}$-valued $\cF\otimes\cB(\Delta_n(S,T))$-measurable random fields $\xi$ on $\Omega\times\Delta_n(S,T)$ such that $\|\xi\|_{L^2_\cF(\Delta_n(S,T))}<\infty$, where
\begin{equation*}
	\|\xi\|_{L^2_\cF(\Delta_n(S,T))}:=\bE\Bigl[\int^T_S\int^{t_1}_S\mathalpha{\cdots}\int^{t_{n-1}}_S|\xi(t_1,t_2,\mathalpha{\dots},t_n)|^2\rd t_n\mathalpha{\cdots}\rd t_1\Bigr]^{1/2}.
\end{equation*}
\item
$L^2_\bF(\Delta_n(S,T);\bR^{d_1\times d_2})$: the set of $\xi\in L^2_\cF(\Delta_n[S,T];\bR^{d_1\times d_2})$ such that $\xi(t_1,t_2,\mathalpha{\dots},t_n)$ is $\cF^S_{t_n}$-measurable for any $(t_1,t_2,\mathalpha{\dots},t_n)\in\Delta_n(S,T)$. We denote $\|\xi\|_{L^2_\bF(\Delta_n(S,T))}:=\|\xi\|_{L^2_\cF(\Delta_n(S,T))}$.
\item
$L^2_{\bF,*}(\Delta_n(S,T);\bR^{d_1\times d_2})$: the set of $\xi\in L^2_\cF(\Delta_n[S,T];\bR^{d_1\times d_2})$ such that $\xi(t_1,t_2,\mathalpha{\dots},t_n)$ is $\cF^{t_2}_{t_1}$-measurable for any $(t_1,t_2,\mathalpha{\dots},t_n)\in\Delta_n(S,T)$. We denote $\|\xi\|_{L^2_{\bF,*}(\Delta_n(S,T))}:=\|\xi\|_{L^2_\cF(\Delta_n(S,T))}$.
\end{itemize}

When $n=1$, we denote the spaces $L^2(\Delta_1(S,T);\bR^{d_1\times d_2})$, $L^2_\cF(\Delta_1(S,T);\bR^{d_1\times d_2})$ and $L^2_\bF(\Delta_1(S,T);\bR^{d_1\times d_2})$ by $L^2(S,T;\bR^{d_1\times d_2})$, $L^2_\cF(S,T;\bR^{d_1\times d_2})$ and $L^2_\bF(S,T;\bR^{d_1\times d_2})$, respectively.

Here we summarize the notations which we will introduce in the following sections. Let $0\leq S<T<\infty$ and $d,d_1,d_2,d_3\in\bN$ be fixed.
\begin{itemize}
\item
For each $n\in\bN_0$, $\fW_n:L^2(\Delta_{n+1}(S,T);\bR^{d_1\times d_2})\to L^2_\bF(S,T;\bR^{d_1\times d_2})$ and $\bfW_n:L^2(\Delta_{n+2}(S,T);\bR^{d_1\times d_2})\to L^2_\bF(\Delta_2(S,T);\bR^{d_1\times d_2})$ denote the $n$-th iterated stochastic integral operators. See \cref{chaos_defi_iterated}.
\item
For each $\xi\in L^2_\bF(S,T;\bR^{d_1\times d_2})$ and $\Xi\in L^2_\bF(\Delta_2(S,T);\bR^{d_1\times d_2})$, $\fF_n[\xi]$ and $\bfF_n[\Xi]$ denote the integrands appearing in the $n$-th Wiener--It\^{o} chaos expansions of $\xi$ and $\Xi$, and $\fW_n[\xi]:=\fW_n[\fF_n[\xi]]$, $\bfW_n[\Xi]:=\bfW_n[\bfF_n[\Xi]]$. See \cref{chaos_prop_chaos}.
\item
For each $n\in\bN_0$, $\cV_{n+1}(S,T;\bR^{d\times d})$ denotes the space of $(n+1)$-parameters deterministic Volterra kernels. See \cref{detkernel_defi_kernel}.
\item
For each $f:\Delta_{m+1}(S,T)\to\bR^{d_1\times d_2}$ and $g:\Delta_{n+1}(S,T)\to\bR^{d_2\times d_3}$ with $m,n\in\bN_0$, $f\triangleright g:\Delta_{m+n+1}(S,T)\to\bR^{d_1\times d_3}$ denotes the $\triangleright$-product. See \cref{detkernel_defi_triangleright}.
\item
$\cK_\bF(S,T;\bR^{d\times d})$ denotes the space of $\star$-Volterra kernels. See \cref{star_defi_starkernel}.
\item
For each $K\in\cK_\bF(S,T;\bR^{d\times d})$ and $\xi\in L^2_\bF(S,T;\bR^{d\times d_1})$, $K\star\xi\in L^2_\bF(S,T;\bR^{d\times d_1})$ denotes the $\star$-product. See \cref{star_defi_starprod}.
\item
For each $f\in L^2_\cF(\Delta_{n+2}(S,T);\bR^{d_1\times d_2})$ and $g\in L^2_\cF(\Delta_{m+2}(S,T);\bR^{d_2\times d_3})$ with $n,m\in\bN_0$, $f\ast g:\Omega\times\Delta_{n+m+2}(S,T)\to\bR^{d_1\times d_3}$ denote the $\ast$-products. See \cref{ast_defi_astprod}.
\item
$\cJ_\bF(S,T;\bR^{d\times d})$ denotes the space of $\ast$-Volterra kernels. See \cref{ast_defi_astkernel}.
\item
For each $n\in\bN_0$, $\cM_n:L^2_\bF(S,T;\bR^{d_1\times d_2})\to L^2_\bF(\Delta_{n+1}(S,T);\bR^{d_1\times d_2})$ denotes the $n$-th martingale representation operator. See \cref{BSVIE_defi_martop}.
\item
For each $K\in\cK_\bF(S,T;\bR^{d\times d})$ and $\xi\in L^2_\bF(S,T;\bR^{d\times d_1})$, $K\bstar\xi\in L^2_\bF(S,T;\bR^{d\times d_1})$ denotes the backward $\star$-product. See \cref{bstar_defi_bstarprod}.
\item
For each $J\in\cJ_\bF(S,T;\bR^{d\times d})$ and $\xi\in L^2_\bF(S,T;\bR^{d\times d_1})$, $J\bast\xi\in L^2_\bF(S,T;\bR^{d\times d_1})$ denotes the backward $\ast$-product. See \cref{bast_defi_bastprod}.
\end{itemize}

In the following sections, we fix $0\leq S<T<\infty$ and $d\in\bN$.


\section{Preliminaries}\label{section_Pre}

\subsection{The Wiener--It\^{o} chaos expansion}

For each $\xi\in L^2_\bF(\Delta_{n+1}(S,T);\bR^{d_1\times d_2})$ with $d_1,d_2\in\bN$ and $n\in\bN$, the iterated stochastic integral
\begin{equation*}
	\int^t_S\int^{t_1}_S\mathalpha{\cdots}\int^{t_{n-1}}_S\xi(t,t_1,\mathalpha{\dots},t_n)\rd W(t_n)\mathalpha{\cdots}\rd W(t_1),\ t\in(S,T),
\end{equation*}
is well-defined as an element of $L^2_\bF(S,T;\bR^{d_1\times d_2})$. Furthermore, the following isometry holds:
\begin{equation*}
	\bE\Bigl[\int^T_S\Bigl|\int^t_S\int^{t_1}_S\mathalpha{\cdots}\int^{t_{n-1}}_S\xi(t,t_1,\mathalpha{\dots},t_n)\rd W(t_n)\mathalpha{\cdots}\rd W(t_1)\Bigr|^2\rd t\Bigr]^{1/2}=\|\xi\|_{L^2_\bF(\Delta_{n+1}(S,T))}.
\end{equation*}
From the Winer--It\^{o} chaos expansion in It\^{o}~\cite{It51}, we see that each adapted process $\xi\in L^2_\bF(S,T;\bR^{d_1\times d_2})$ has an orthogonal expansion in terms of iterated stochastic integrals with deterministic integrands. We introduce the following notation.


\begin{defi}\label{chaos_defi_iterated}
Let $d_1,d_2\in\bN$.
\begin{itemize}
\item[(i)]
We define operators $\fW_n:L^2(\Delta_{n+1}(S,T);\bR^{d_1\times d_2})\to L^2_\bF(S,T;\bR^{d_1\times d_2})$, $n\in\bN_0$, by
\begin{equation*}
	\fW_0[f_0](t):=f_0(t),\ t\in(S,T),
\end{equation*}
for each $f_0\in L^2(S,T;\bR^{d_1\times d_2})$, and
\begin{equation*}
	\fW_n[f_n](t):=\int^t_S\int^{t_1}_S\mathalpha{\cdots}\int^{t_{n-1}}_Sf_n(t,t_1,\mathalpha{\dots},t_n)\rd W(t_n)\mathalpha{\cdots}\rd W(t_1),\ t\in(S,T),
\end{equation*}
for each $f_n\in L^2(\Delta_{n+1}(S,T);\bR^{d_1\times d_2})$ and $n\in\bN$.
\item[(ii)]
We define operators $\bfW_n:L^2(\Delta_{n+2}(S,T);\bR^{d_1\times d_2})\to L^2_{\bF,\ast}(\Delta_2(S,T);\bR^{d_1\times d_2})$, $n\in\bN_0$, by
\begin{equation*}
	\bfW_0[g_0](t,s):=g_0(t,s),\ (t,s)\in\Delta_2(S,T),
\end{equation*}
for each $g_0\in L^2(\Delta_2(S,T);\bR^{d_1\times d_2})$, and
\begin{equation*}
	\bfW_n[g_n](t,s):=\int^t_s\int^{t_1}_s\mathalpha{\cdots}\int^{t_{n-1}}_sg_n(t,t_1,\mathalpha{\dots},t_n,s)\rd W(t_n)\mathalpha{\cdots}\rd W(t_1),\ (t,s)\in\Delta_2(S,T),
\end{equation*}
for each $g_n\in L^2(\Delta_{n+2}(S,T);\bR^{d_1\times d_2})$ and $n\in\bN$.
\end{itemize}
\end{defi}

In the present framework, the Wiener--It\^{o} chaos expansion theorem in It\^{o}~\cite{It51} can be stated as follows.


\begin{prop}[The Wiener--It\^{o} chaos expansion for stochastic processes]\label{chaos_prop_chaos}
Let $d_1,d_2\in\bN$.
\begin{itemize}
\item[(i)]
The space $L^2_\bF(S,T;\bR^{d_1\times d_2})$ is decomposed into the infinite orthogonal sum:
\begin{equation*}
	L^2_\bF(S,T;\bR^{d_1\times d_2})=\bigoplus^\infty_{n=0}\fW_n[L^2(\Delta_{n+1}(S,T);\bR^{d_1\times d_2})].
\end{equation*}
More precisely, for any $\xi\in L^2_\bF(S,T;\bR^{d_1\times d_2})$, there exists a unique sequence $\{\fF_n[\xi]\}_{n\in\bN_0}$ with $\fF_n[\xi]\in L^2(\Delta_{n+1}[S,T];\bR^{d_1\times d_2})$ for each $n\in\bN_0$ such that
\begin{equation*}
	\sum^\infty_{n=0}\|\fF_n[\xi]\|^2_{L^2(\Delta_{n+1}(S,T))}<\infty
\end{equation*}
and
\begin{align*}
	\xi=\sum^\infty_{n=0}\fW_n[\fF_n[\xi]],
\end{align*}
where the infinite sum in the right-hand side converges in $L^2_\bF(S,T;\bR^{d_1\times d_2})$. Furthermore, the following isometry holds:
\begin{equation*}
	\|\xi\|^2_{L^2_\bF(S,T)}=\sum^\infty_{n=0}\|\fF_n[\xi]\|^2_{L^2(\Delta_{n+1}(S,T))}.
\end{equation*}
\item[(ii)]
The space $L^2_{\bF,\ast}(\Delta_2(S,T);\bR^{d_1\times d_2})$ is decomposed into the infinite orthogonal sum:
\begin{equation*}
	L^2_{\bF,\ast}(\Delta_2(S,T);\bR^{d_1\times d_2})=\bigoplus^\infty_{n=0}\bfW_n[L^2(\Delta_{n+2}(S,T);\bR^{d_1\times d_2})].
\end{equation*}
More precisely, for any $\Xi\in L^2_{\bF,\ast}(\Delta_2(S,T);\bR^{d_1\times d_2})$, there exists a unique sequence $\{\bfF_n[\Xi]\}_{n\in\bN_0}$ with $\bfF_n[\Xi]\in L^2(\Delta_{n+2}[S,T];\bR^{d_1\times d_2})$ for each $n\in\bN_0$ such that
\begin{equation*}
	\sum^\infty_{n=0}\|\bfF_n[\Xi]\|^2_{L^2(\Delta_{n+2}(S,T))}<\infty
\end{equation*}
and
\begin{align*}
	\Xi=\sum^\infty_{n=0}\bfW_n[\bfF_n[\Xi]],
\end{align*}
where the infinite sum in the right-hand side converges in $L^2_{\bF,\ast}(\Delta_2(S,T);\bR^{d_1\times d_2})$. Furthermore, the following isometry holds:
\begin{equation*}
	\|\Xi\|^2_{L^2_{\bF,\ast}(\Delta_2(S,T))}=\sum^\infty_{n=0}\|\bfF_n[\Xi]\|^2_{L^2(\Delta_{n+2}(S,T))}.
\end{equation*}
\end{itemize}
\end{prop}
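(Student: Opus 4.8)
The plan is to reduce the statement to the classical Wiener--It\^{o} chaos expansion of It\^{o}~\cite{It51}, applied separately for each frozen value of the first time-parameter, and then to promote that pointwise construction to a jointly measurable one. Since $\fW_n$ and $\bfW_n$ act entrywise on matrices, I would first observe that it suffices to treat the scalar case $d_1=d_2=1$, prove (i) in full, and deduce (ii) from it by also freezing the last time-parameter.

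For (i), fix $\xi\in L^2_\bF(S,T;\bR)$. For a.e.\ $t\in(S,T)$ the random variable $\xi(t)$ lies in $L^2(\Omega,\cF^S_t,\bP)\subset L^2(\Omega,\cF^S_T,\bP)$, so the classical chaos expansion for the Brownian motion $(W(r)-W(S))_{r\geq S}$ furnishes, for each $n\in\bN_0$, a unique kernel $g^t_n\in L^2(\Delta_n(S,T))$ (with $g^t_0=\bE[\xi(t)]\in\bR$) such that $\xi(t)=\sum_{n\geq 0}\mathcal I_n(g^t_n)$ in $L^2(\Omega)$ and $\|\xi(t)\|_{L^2(\Omega)}^2=\sum_{n\geq 0}\|g^t_n\|_{L^2(\Delta_n(S,T))}^2$, where $\mathcal I_n$ denotes the order-$n$ iterated It\^{o} integral. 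Since $\xi(t)$ is $\cF^S_t$-measurable and $\bE[\,\cdot\mid\cF^S_t]$ restricts the kernel of $\mathcal I_n$ to the subsimplex $\Delta_n(S,t)$, the kernel $g^t_n$ is in fact supported on $\Delta_n(S,t)$. I would then set $\fF_n[\xi](t,t_1,\dots,t_n):=g^t_n(t_1,\dots,t_n)$; by construction this vanishes off $\Delta_{n+1}(S,T)$ and $\fW_n[\fF_n[\xi]](t)=\mathcal I_n(g^t_n)$.

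The main obstacle is that $\fF_n[\xi]$ has so far been built only for a.e.\ fixed $t$, and one must exhibit a version that is jointly measurable on $\Delta_{n+1}(S,T)$ and lies in $L^2(\Delta_{n+1}(S,T))$. I would argue as follows. By It\^{o}'s theorem $\mathcal I_n$ is an isometric isomorphism of $L^2(\Delta_n(S,T))$ onto the $n$-th Wiener chaos $\cH_n\subset L^2(\Omega,\cF^S_T,\bP)$, hence $\eta\mapsto\mathcal I_n^{-1}\Pi_n\eta$ is a bounded linear operator from $L^2(\Omega,\cF^S_T,\bP)$ into $L^2(\Delta_n(S,T))$, with $\Pi_n$ the orthogonal projection onto $\cH_n$. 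By Fubini, $\xi$ corresponds to a Bochner-measurable map $t\mapsto\xi(t)\in L^2(\Omega,\cF^S_T,\bP)$; composing with the bounded operator above shows $t\mapsto g^t_n=\mathcal I_n^{-1}\Pi_n\xi(t)$ is Bochner measurable into $L^2(\Delta_n(S,T))$. Identifying $L^2((S,T);L^2(\Delta_n(S,T)))$ with $L^2$ of the product space and invoking the support property just established yields $\fF_n[\xi]\in L^2(\Delta_{n+1}(S,T))$ with $\|\fF_n[\xi]\|_{L^2(\Delta_{n+1}(S,T))}^2=\int_S^T\|g^t_n\|_{L^2(\Delta_n(S,T))}^2\rd t$.

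What remains is routine. Integrating the pointwise Parseval identity in $t$ and applying Tonelli gives $\|\xi\|_{L^2_\bF(S,T)}^2=\int_S^T\|\xi(t)\|_{L^2(\Omega)}^2\rd t=\sum_{n\geq 0}\|\fF_n[\xi]\|_{L^2(\Delta_{n+1}(S,T))}^2<\infty$; the partial sums converge because $\|\xi-\sum_{n\leq N}\fW_n[\fF_n[\xi]]\|_{L^2_\bF(S,T)}^2=\int_S^T\|\xi(t)-\sum_{n\leq N}\mathcal I_n(g^t_n)\|_{L^2(\Omega)}^2\rd t\to 0$ by dominated convergence (the integrand is bounded by $\|\xi(t)\|_{L^2(\Omega)}^2\in L^1(S,T)$ and tends to $0$ for a.e.\ $t$); mutual orthogonality of the images $\fW_n[L^2(\Delta_{n+1}(S,T))]$ comes from orthogonality of distinct Wiener chaoses, applied for each fixed $t$ and integrated; and uniqueness of $\{\fF_n[\xi]\}_{n\in\bN_0}$ follows from the pointwise-in-$t$ uniqueness in It\^{o}'s theorem. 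For (ii), I would fix $s\in(S,T)$, extend $\Xi(\cdot,s)$ by zero on $(S,s]$ to an element of $L^2_\bF(S,T;\bR)$, apply (i) to get kernels $\fF_n[\Xi(\cdot,s)]\in L^2(\Delta_{n+1}(S,T))$ --- supported on $\{t_n>s\}$ because $\Xi(t,s)$ is in fact $\cF^s_t$-measurable --- and set $\bfF_n[\Xi](t,t_1,\dots,t_n,s):=\fF_n[\Xi(\cdot,s)](t,t_1,\dots,t_n)$, which then lives on $\Delta_{n+2}(S,T)$ and satisfies $\bfW_n[\bfF_n[\Xi]](t,s)=\fW_n[\fF_n[\Xi(\cdot,s)]](t)$. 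Joint measurability in $(t,t_1,\dots,t_n,s)$ follows by composing the Bochner-measurable map $s\mapsto\Xi(\cdot,s)\in L^2_\bF(S,T;\bR)$ (again a Fubini argument) with the bounded linear operator $\zeta\mapsto\fF_n[\zeta]$ from (i), and the identity $\|\Xi\|_{L^2_{\bF,*}(\Delta_2(S,T))}^2=\int_S^T\|\Xi(\cdot,s)\|_{L^2_\bF(S,T)}^2\rd s$, combined with (i), gives the stated isometry and the convergence of the expansion of $\Xi$.
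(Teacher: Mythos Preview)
The paper does not prove this proposition: it is stated as a reformulation of It\^{o}'s classical chaos expansion~\cite{It51} in the present notation, with no argument given. Your proposal therefore supplies the details the paper omits rather than competing with an existing proof, and the argument you outline is correct. The reduction to the scalar case, the pointwise-in-$t$ application of It\^{o}'s theorem, the Bochner-measurability route to joint measurability of $\fF_n[\xi]$ (via the bounded operator $\mathcal I_n^{-1}\Pi_n$), and the dominated-convergence step for $L^2_\bF$-convergence of the partial sums are all sound and standard.

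One point deserves a little more care. In part~(ii), the support claim that $\fF_n[\Xi(\cdot,s)]$ vanishes on $\{t_n\leq s\}$ does not follow from independence of $\cF^S_s$ alone (that only forces vanishing on $\{t_1\leq s\}$). The clean justification is the one you hint at: since $\Xi(t,s)$ is $\cF^s_t$-measurable, it admits a chaos expansion with respect to the shifted Brownian motion $(W(r)-W(s))_{r\geq s}$, whose kernels live on $\Delta_n(s,t)$; extending those kernels by zero gives an $\bF^S$-chaos expansion of $\Xi(t,s)$, and uniqueness in~(i) then forces $\fF_n[\Xi(\cdot,s)]$ to be supported on $\Delta_n(s,t)$. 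With this in hand, the lower limits $s$ in the definition of $\bfW_n$ appear naturally and the rest of your argument for~(ii) goes through.
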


We sometimes write
\begin{equation*}
	\fW_n[\xi]:=\fW_n[\fF_n[\xi]],\ \bfW_n[\Xi]:=\bfW_n[\bfF_n[\Xi]],
\end{equation*}
which are the projections of $\xi\in L^2_\bF(S,T;\bR^{d_1\times d_2})$ and $\Xi\in L^2_{\bF,\ast}(\Delta_2(S,T);\bR^{d_1\times d_2})$ on the $n$-th chaos for each $n\in\bN_0$, respectively.

\subsection{Deterministic Volterra kernels}

We introduce the spaces of deterministic Volterra kernels and investigate their fundamental properties.


\begin{defi}\label{detkernel_defi_kernel}
Define the spaces $\cV_{n+1}(S,T;\bR^{d\times d})$, $n\in\bN_0$, of \emph{$(n+1)$-parameters deterministic Volterra kernels} by
\begin{equation*}
	\cV_{n+1}(S,T;\bR^{d\times d}):=\{k:\Delta_{n+1}(S,T)\to\bR^{d\times d}\,|\,\text{$k$ is measurable and}\ \knorm k\knorm_{\cV_{n+1}(S,T)}<\infty\},
\end{equation*}
where $\knorm k\knorm_{\cV_1(S,T)}:=\underset{t\in(S,T)}{\mathrm{ess\,sup}}|k(t)|_\op$ for $n=0$, and
\begin{equation*}
	\knorm k\knorm_{\cV_{n+1}(S,T)}:=\underset{t\in(S,T)}{\mathrm{ess\,sup}}\Bigl(\int_{\Delta_n(t,T)}|k(s,t)|^2_\op\rd s\Bigr)^{1/2}
\end{equation*}
for $n\in\bN$.
Here and elsewhere, for each matrix $A\in\bR^{d\times d}$, $|A|_\op$ denotes the operator norm of $A$ as an linear operator on $\bR^d$. (Recall that $|A|:=\sqrt{\mathrm{tr}(AA^\top)}$ denotes the Frobenius norm of $A$.)
\end{defi}

The following lemma can be easily proved by the definition. We omit the proof.
	

\begin{lemm}\label{detkernel_lemm_kernel}
\begin{itemize}
\item[(i)]
For each $n\in\bN_0$, $(\cV_{n+1}(S,T;\bR^{d\times d}),\|\cdot\|_{\cV_{n+1}(S,T)})$ is a Banach space. Furthermore, for each $k\in\cV_{n+1}(S,T;\bR^{d\times d})$, it holds that $\|k\|_{L^2(\Delta_{n+1}(S,T))}\leq\sqrt{d(T-S)}\knorm k\knorm_{\cV_{n+1}(S,T)}$. In particular, $\cV_{n+1}(S,T;\bR^{d\times d})\subset L^2(\Delta_{n+1}(S,T);\bR^{d\times d})$.
\item[(ii)]
Let $k:\Delta_{n+1}(S,T)\to\bR^{d\times d}$ be a measurable map with $n\in\bN$. Assume that there exists a function $a\in L^2((0,T-S)^n;\bR)$ such that
\begin{equation*}
	|k(t_0,t_1,t_2,\mathalpha{\dots},t_n)|_\op\leq a(t_0-t_1,t_1-t_2,\mathalpha{\dots},t_{n-1}-t_n)
\end{equation*}
for a.e.\ $(t_0,t_1,t_2,\mathalpha{\dots},t_n)\in\Delta_{n+1}(S,T)$. Then $k$ is in $\cV_{n+1}(S,T;\bR^{d\times d})$, and it holds that $\knorm k\knorm_{\cV_{n+1}(S,T)}\leq\|a\|_{L^2((0,T-S)^n)}$.
\end{itemize}
\end{lemm}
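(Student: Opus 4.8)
The plan is to treat the two parts separately; both are routine measure theory, with part (i) a mixed-norm completeness argument and part (ii) a change of variables.

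\emph{Completeness in (i).} For $n=0$ the space $\cV_1(S,T;\bR^{d\times d})$ is just $L^\infty((S,T);\bR^{d\times d})$ equipped with the matrix operator norm, and its completeness is standard. For $n\in\bN$ I would introduce, for $k\in\cV_{n+1}(S,T;\bR^{d\times d})$, the function $\phi_k(t):=\bigl(\int_{\Delta_n(t,T)}|k(s,t)|^2_{\op}\rd s\bigr)^{1/2}$, which is measurable by Tonelli's theorem (using that $A\mapsto|A|_{\op}$ is continuous on $\bR^{d\times d}$), and record that $\knorm k\knorm_{\cV_{n+1}(S,T)}=\|\phi_k\|_{L^\infty(S,T)}$ and that, for each fixed $t$, $\phi_{k-k'}(t)$ is a norm of $s\mapsto k(s,t)-k'(s,t)$ equivalent to its $L^2(\Delta_n(t,T);\bR^{d\times d})$-norm. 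Given a Cauchy sequence $\{k^{(m)}\}_m$, for each pair $(m,l)$ the set $\{t\in(S,T):\phi_{k^{(m)}-k^{(l)}}(t)>\knorm k^{(m)}-k^{(l)}\knorm_{\cV_{n+1}(S,T)}\}$ is null, so the countable union $N$ of these sets is null and off $N$ the sequence $\{k^{(m)}(\cdot,t)\}_m$ is Cauchy in $L^2(\Delta_n(t,T);\bR^{d\times d})$ for every $t\in(S,T)\setminus N$. Passing to a subsequence with $\knorm k^{(m_{j+1})}-k^{(m_j)}\knorm_{\cV_{n+1}(S,T)}\le 2^{-j}$ and running the telescoping argument from the proof of the Riesz--Fischer theorem, $k^{(m_j)}(s,t)$ converges for a.e.\ $(s,t)\in\Delta_{n+1}(S,T)$; defining $k(s,t)$ componentwise as the $\limsup$ of $k^{(m_j)}(s,t)$ where finite and $0$ otherwise yields a jointly measurable map with $k^{(m_j)}(\cdot,t)\to k(\cdot,t)$ in $L^2(\Delta_n(t,T);\bR^{d\times d})$ for each $t\notin N$. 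Then $\phi_{k^{(m)}-k}(t)=\lim_j\phi_{k^{(m)}-k^{(m_j)}}(t)\le\limsup_j\knorm k^{(m)}-k^{(m_j)}\knorm_{\cV_{n+1}(S,T)}$ for $t\notin N$, so $\knorm k^{(m)}-k\knorm_{\cV_{n+1}(S,T)}\to 0$ and, by the triangle inequality, $k\in\cV_{n+1}(S,T;\bR^{d\times d})$.

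\emph{The $L^2$-estimate in (i).} Here I would use the elementary matrix-norm inequality $|A|\le\sqrt d\,|A|_{\op}$ for $A\in\bR^{d\times d}$ together with Tonelli's theorem, integrating the smallest time variable last, so that for $n\in\bN$ one gets $\|k\|^2_{L^2(\Delta_{n+1}(S,T))}=\int^T_S\bigl(\int_{\Delta_n(t,T)}|k(s,t)|^2\rd s\bigr)\rd t\le d\int^T_S\bigl(\int_{\Delta_n(t,T)}|k(s,t)|^2_{\op}\rd s\bigr)\rd t\le d(T-S)\knorm k\knorm^2_{\cV_{n+1}(S,T)}$, and likewise $\|k\|^2_{L^2(S,T)}\le d(T-S)\knorm k\knorm^2_{\cV_1(S,T)}$ for $n=0$. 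Taking square roots gives the stated bound, whence $\cV_{n+1}(S,T;\bR^{d\times d})\subset L^2(\Delta_{n+1}(S,T);\bR^{d\times d})$.

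\emph{Part (ii) and the main obstacle.} Fix $t\in(S,T)$ for which the pointwise bound $|k(s,t)|_{\op}\le a(s_1-s_2,\dots,s_{n-1}-s_n,s_n-t)$ holds for a.e.\ $s=(s_1,\dots,s_n)\in\Delta_n(t,T)$, which is the case for a.e.\ $t$ by Tonelli's theorem. Then $\int_{\Delta_n(t,T)}|k(s,t)|^2_{\op}\rd s\le\int_{\{T>s_1>\cdots>s_n>t\}}a(s_1-s_2,\dots,s_{n-1}-s_n,s_n-t)^2\rd s$, and I would apply the linear change of variables $u_i:=s_i-s_{i+1}$ $(i=1,\dots,n-1)$, $u_n:=s_n-t$, which has Jacobian $1$ and maps the simplex $\{T>s_1>\cdots>s_n>t\}$ into $\{u\in(0,\infty)^n:u_1+\cdots+u_n<T-t\}\subset(0,T-S)^n$; hence the right-hand side is $\le\|a\|^2_{L^2((0,T-S)^n)}$ uniformly in $t$, giving $\knorm k\knorm_{\cV_{n+1}(S,T)}\le\|a\|_{L^2((0,T-S)^n)}<\infty$, so $k\in\cV_{n+1}(S,T;\bR^{d\times d})$ (recall $k$ is assumed measurable). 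None of this is genuinely hard; the only step requiring care is the joint measurability of the limit kernel in the completeness proof, handled by the subsequence-and-$\limsup$ device above, the rest being bookkeeping with Tonelli's theorem and the equivalence of $|\cdot|$ and $|\cdot|_{\op}$ on $\bR^{d\times d}$.
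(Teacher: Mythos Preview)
Your proof is correct; the paper itself omits the proof entirely, stating only that the lemma ``can be easily proved by the definition,'' so your argument supplies exactly the details the author leaves to the reader. Your handling of completeness via the mixed-norm/subsequence device, the $|A|\le\sqrt d\,|A|_{\op}$ bound for the $L^2$-estimate, and the unit-Jacobian change of variables in part~(ii) are all the natural direct verifications one would expect from the definition.
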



\begin{rem}
From the above lemma, we see that $\cV_{n+1}(S,T;\bR^{d\times d})$ includes ($(n+1)$-parameters) singular kernels. For example, the fractional kernel $k(t,s)=\frac{1}{\Gamma(\alpha)}(t-s)^{\alpha-1}$ with $\alpha\in(\frac{1}{2},1]$ is in $\cV_2(S,T;\bR)$.
\end{rem}

We use the following notation frequently in this paper.

\begin{defi}\label{detkernel_defi_triangleright}
For each $f:\Delta_{m+1}(S,T)\to\bR^{d_1\times d_2}$ and $g:\Delta_{n+1}(S,T)\to\bR^{d_2\times d_3}$ with $m,n\in\bN$ and $d_1,d_2,d_3\in\bN$, we define the \emph{$\triangleright$-product} $f\triangleright g:\Delta_{m+n+1}(S,T)\to\bR^{d_1\times d_3}$ by
\begin{equation*}
	(f\triangleright g)(t_0,t_1,\mathalpha{\dots},t_{m+n}):=f(t_0,t_1,\mathalpha{\dots},t_m)g(t_m,t_{m+1},\mathalpha{\dots},t_{m+n})
\end{equation*}
for $(t_0,t_1,\mathalpha{\dots},t_{m+n})\in\Delta_{m+n+1}(S,T)$. When $m=0$, $f\triangleright g:\Delta_{n+1}(S,T)\to\bR^{d_1\times d_3}$ is defined by
\begin{equation*}
	(f\triangleright g)(t_0,t_1,\mathalpha{\dots},t_n):=f(t_0)g(t_0,t_1,\mathalpha{\dots},t_n)
\end{equation*}
for $(t_0,t_1,\mathalpha{\dots},t_n)\in\Delta_{n+1}(S,T)$. When $n=0$, $f\triangleright g:\Delta_{m+1}(S,T)\to\bR^{d_1\times d_3}$ is defined by
\begin{equation*}
	(f\triangleright g)(t_0,t_1,\mathalpha{\dots},t_m):=f(t_0,t_1,\mathalpha{\dots},t_m)g(t_m)
\end{equation*}
for $(t_0,t_1,\mathalpha{\dots},t_m)\in\Delta_{m+1}(S,T)$.
\end{defi}
	

\begin{lemm}\label{detkernel_lemm_kernel2}
\begin{itemize}
\item[(i)]
For each $k_1\in\cV_{m+1}(S,T;\bR^{d\times d})$ and $k_2\in\cV_{n+1}(S,T;\bR^{d\times d})$ with $m,n\in\bN_0$, the $\triangleright$-product $k_1\triangleright k_2$ is in $\cV_{m+n+1}(S,T;\bR^{d\times d})$ and satisfies
\begin{equation*}
	\knorm k_1\triangleright k_2\knorm_{\cV_{m+n+1}(S,T)}\leq\knorm k_1\knorm_{\cV_{m+1}(S,T)}\knorm k_2\knorm_{\cV_{n+1}(S,T)}.
\end{equation*}
\item[(ii)]
For each $k\in\cV_{m+1}(S,T;\bR^{d\times d})$ and $f\in L^2(\Delta_{n+1}(S,T);\bR^{d\times d_1})$ with $m,n\in\bN_0$ and $d_1\in\bN$, the $\triangleright$-product $k\triangleright f$ is in $L^2(\Delta_{m+n+1}(S,T);\bR^{d\times d_1})$ and satisfies
\begin{equation*}
	\|k\triangleright f\|_{L^2(\Delta_{m+n+1}(S,T))}\leq\knorm k\knorm_{\cV_{m+1}(S,T)}\|f\|_{L^2(\Delta_{n+1}(S,T))}.
\end{equation*}
\end{itemize}
\end{lemm}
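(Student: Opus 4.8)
The plan is to derive both inequalities directly from the definitions; the only tools needed are the submultiplicativity of the matrix norms (namely $|AB|_\op\le|A|_\op|B|_\op$, and $|AB|\le|A|_\op|B|$ when the right-hand factor is measured in the Frobenius norm) together with Tonelli's theorem applied to nonnegative integrands. Measurability of $k_1\triangleright k_2$ and of $k\triangleright f$ is immediate, since each is a product of compositions of the given measurable maps with coordinate projections on the simplex $\Delta_{m+n+1}(S,T)$.

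For part (i), assume first that $m,n\in\bN$ and fix the last (``anchor'') time-variable $\tau:=t_{m+n}\in(S,T)$. Submultiplicativity of $|\cdot|_\op$ gives
\begin{align*}
	&\int_{\Delta_{m+n}(\tau,T)}|(k_1\triangleright k_2)(s,\tau)|_\op^2\rd s\\
	&\qquad\le\int_{\Delta_{m+n}(\tau,T)}|k_1(t_0,\dots,t_m)|_\op^2\,|k_2(t_m,\dots,t_{m+n-1},\tau)|_\op^2\rd t_0\cdots\rd t_{m+n-1}.
\end{align*}
Since the ordering $T>t_0>\dots>t_{m+n-1}>\tau$ splits at the shared coordinate $t_m$, Tonelli's theorem lets me integrate out $(t_0,\dots,t_{m-1})$ first; the resulting inner integral equals $\int_{\Delta_m(t_m,T)}|k_1(s,t_m)|_\op^2\rd s$, which is at most $\knorm k_1\knorm_{\cV_{m+1}(S,T)}^2$ for a.e.\ $t_m$. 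Pulling out this constant leaves $\int_{\Delta_n(\tau,T)}|k_2(s,\tau)|_\op^2\rd s\le\knorm k_2\knorm_{\cV_{n+1}(S,T)}^2$ for a.e.\ $\tau$, and taking the essential supremum over $\tau$ yields the claimed bound. The cases $m=0$ or $n=0$ are handled the same way, replacing the corresponding inner integral by the pointwise bound $|k_1(t_0)|_\op\le\knorm k_1\knorm_{\cV_1(S,T)}$ (resp.\ $|k_2(t_m)|_\op\le\knorm k_2\knorm_{\cV_1(S,T)}$).

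Part (ii) follows the same pattern. Using $|AB|\le|A|_\op|B|$, I bound $\|k\triangleright f\|_{L^2(\Delta_{m+n+1}(S,T))}^2$ by $\int_{\Delta_{m+n+1}(S,T)}|k(t_0,\dots,t_m)|_\op^2\,|f(t_m,\dots,t_{m+n})|^2\rd t_0\cdots\rd t_{m+n}$, factor the simplex at $t_m$, estimate the inner block over $(t_0,\dots,t_{m-1})$ by $\knorm k\knorm_{\cV_{m+1}(S,T)}^2$ for a.e.\ $t_m$, and recognize what remains as the integral of $|f(t_m,\dots,t_{m+n})|^2$ over $(t_m,\dots,t_{m+n})\in\Delta_{n+1}(S,T)$, i.e.\ $\|f\|_{L^2(\Delta_{n+1}(S,T))}^2$; the cases $m=0$ and $n=0$ are treated as before. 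There is no genuine analytic difficulty here. The only point requiring attention is keeping track of which variable plays the anchor role in each $\cV$-norm: for $k_1\triangleright k_2$ the anchor $t_{m+n}$ is the anchor of $k_2$, while the shared coordinate $t_m$ is the anchor of $k_1$, so the simplex must be peeled ``from the inside out,'' integrating the $k_1$-variables before the $k_2$-variables (resp.\ the $f$-variables).
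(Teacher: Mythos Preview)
Your proof is correct and follows essentially the same approach as the paper: both use the submultiplicativity inequalities $|AB|_\op\le|A|_\op|B|_\op$ and $|AB|\le|A|_\op|B|$, split the simplex at the shared coordinate $t_m$, integrate out the $k_1$-block first via Tonelli/Fubini to extract $\knorm k_1\knorm_{\cV_{m+1}(S,T)}^2$, and then bound the remaining $k_2$- (resp.\ $f$-) integral. Your explicit treatment of the degenerate cases $m=0$ or $n=0$ and the remark on measurability are minor additions the paper omits, but the argument is otherwise identical.
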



\begin{proof}
\begin{itemize}
\item[(i)]
Noting the inequality $|AB|_\op\leq|A|_\op|B|_\op$ for $A,B\in\bR^{d\times d}$ and using the Fubini theorem, we see that, for a.e.\ $t\in(S,T)$,
\begin{align*}
	\int_{\Delta_{m+n}(t,T)}|(k_1\triangleright k_2)(s,t)|^2_\op\rd s&=\int_{\Delta_n(t,T)}\int_{\Delta_m(\max s_2,T)}|k_1(s_1,\max s_2)k_2(s_2,t)|^2_\op\rd s_1\rd s_2\\
	&\leq\int_{\Delta_n(t,T)}\int_{\Delta_m(\max s_2,T)}|k_1(s_1,\max s_2)|^2_\op|k_2(s_2,t)|^2_\op\rd s_1\rd s_2\\
	&=\int_{\Delta_n(t,T)}\Bigl\{\int_{\Delta_m(\max s_2,T)}|k_1(s_1,\max s_2)|^2_\op\rd s_1\Bigr\}|k_2(s_2,t)|^2_\op\rd s_2\\
	&\leq\knorm k_1\knorm^2_{\cV_{m+1}(S,T)}\int_{\Delta_n(t,T)}|k_2(s_2,t)|^2_\op\rd s_2\\
	&\leq\knorm k_1\knorm^2_{\cV_{m+1}(S,T)}\knorm k_2\knorm^2_{\cV_{n+1}(S,T)}<\infty.
\end{align*}
This implies that $k_1\triangleright k_2\in\cV_{m+n+1}(S,T;\bR^{d\times d})$ and $\knorm k_1\triangleright k_2\knorm_{\cV_{m+n+1}(S,T)}\leq\knorm k_1\knorm_{\cV_{m+1}(S,T)}\knorm k_2\knorm_{\cV_{n+1}(S,T)}$.
\item[(ii)]
Noting the inequality $|AB|\leq|A|_\op|B|$ for $A\in\bR^{d\times d}$ and $B\in\bR^{d\times d_1}$, we can prove the assertion (ii) by the same way as in (i).
\end{itemize}
\end{proof}


\begin{lemm}\label{detkernel_lemm_kernel3}
Let $k_n\in\cV_{n+1}(S,T;\bR^{d\times d})$ for each $n\in\bN_0$. Suppose that $\sum^\infty_{n=0}\knorm k_n\knorm_{\cV_{n+1}(S,T)}<\infty$. Then $\sum^\infty_{n=0}\|k_n\|^2_{L^2(\Delta_{n+1}(S,T))}<\infty$. Consequently, the infinite sum of the iterated stochastic integrals $\sum^\infty_{n=0}\fW_n[k_n]$ converges in $L^2_\bF(S,T;\bR^{d\times d})$.
\end{lemm}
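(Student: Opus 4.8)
The plan is to reduce the statement to the elementary norm comparison already recorded in \cref{detkernel_lemm_kernel}(i), together with the orthogonality of the chaos components and completeness of $L^2_\bF$.

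First I would observe that by \cref{detkernel_lemm_kernel}(i) each $k_n$ belongs to $L^2(\Delta_{n+1}(S,T);\bR^{d\times d})$ and satisfies $\|k_n\|_{L^2(\Delta_{n+1}(S,T))}\le\sqrt{d(T-S)}\,\knorm k_n\knorm_{\cV_{n+1}(S,T)}$. Squaring and summing over $n\in\bN_0$ gives
\[
\sum_{n=0}^\infty\|k_n\|_{L^2(\Delta_{n+1}(S,T))}^2\le d(T-S)\sum_{n=0}^\infty\knorm k_n\knorm_{\cV_{n+1}(S,T)}^2 .
\]
Since the numbers $\knorm k_n\knorm_{\cV_{n+1}(S,T)}$ are nonnegative and summable by hypothesis, we have the crude bound $\sum_n\knorm k_n\knorm_{\cV_{n+1}(S,T)}^2\le\bigl(\sum_n\knorm k_n\knorm_{\cV_{n+1}(S,T)}\bigr)^2<\infty$, which establishes the first assertion $\sum_{n=0}^\infty\|k_n\|_{L^2(\Delta_{n+1}(S,T))}^2<\infty$.

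For the convergence of $\sum_{n=0}^\infty\fW_n[k_n]$ in $L^2_\bF(S,T;\bR^{d\times d})$, I would use the isometry $\|\fW_n[k_n]\|_{L^2_\bF(S,T)}=\|k_n\|_{L^2(\Delta_{n+1}(S,T))}$ recalled at the beginning of \cref{section_Pre}, and the fact from \cref{chaos_prop_chaos}(i) that the subspaces $\fW_n[L^2(\Delta_{n+1}(S,T);\bR^{d\times d})]$, $n\in\bN_0$, are mutually orthogonal in $L^2_\bF(S,T;\bR^{d\times d})$. Consequently, for $N<M$ the Pythagorean identity gives $\bigl\|\sum_{n=N+1}^M\fW_n[k_n]\bigr\|_{L^2_\bF(S,T)}^2=\sum_{n=N+1}^M\|k_n\|_{L^2(\Delta_{n+1}(S,T))}^2$, which tends to $0$ as $N\to\infty$ by the first part. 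Hence the partial sums $\sum_{n=0}^N\fW_n[k_n]$ form a Cauchy sequence, and by completeness of $L^2_\bF(S,T;\bR^{d\times d})$ the series converges.

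There is no genuinely hard step here: the only points needing (minor) care are the matrix-norm constant $\sqrt{d(T-S)}$, which comes from $|A|\le\sqrt d\,|A|_\op$ combined with the ess-sup-to-$L^2$ estimate and is already packaged into \cref{detkernel_lemm_kernel}(i), and the bookkeeping that $\ell^1$-summability of $(\knorm k_n\knorm_{\cV_{n+1}(S,T)})_n$ entails $\ell^1$-summability of the squares. The lemma is essentially a well-posedness check guaranteeing that the infinite chaos series appearing later (in the definition of $\star$-Volterra kernels) is meaningful.
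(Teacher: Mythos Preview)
Your proof is correct and follows essentially the same approach as the paper: both invoke \cref{detkernel_lemm_kernel}(i) for the norm comparison and then bound $\sum_n\knorm k_n\knorm_{\cV_{n+1}(S,T)}^2$ using the $\ell^1$-summability hypothesis. The only cosmetic difference is that the paper uses $\sum_n a_n^2\le(\sup_n a_n)\sum_n a_n$ whereas you use $\sum_n a_n^2\le(\sum_n a_n)^2$; both are immediate, and your explicit Cauchy argument for the second assertion spells out what the paper leaves implicit via \cref{chaos_prop_chaos}.
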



\begin{proof}
By \cref{detkernel_lemm_kernel}, for each $n\in\bN$, $k_n$ is in $L^2(\Delta_{n+1}(S,T);\bR^{d\times d})$, and it holds that $\|k_n\|_{L^2(\Delta_{n+1}(S,T))}\leq\sqrt{d(T-S)}\knorm k_n\knorm_{\cV_{n+1}(S,T)}$. Furthermore, by the assumption, we have $\sup_{n\in\bN_0}\knorm k_n\knorm_{\cV_{n+1}(S,T)}<\infty$. Therefore, we have
\begin{equation*}
	\sum^\infty_{n=0}\|k_n\|^2_{L^2(\Delta_{n+1}(S,T))}\leq d(T-S)\sum^\infty_{n=0}\knorm k_n\knorm^2_{\cV_{n+1}(S,T)}\leq d(T-S)\sup_{n\in\bN_0}\knorm k_n\knorm_{\cV_{n+1}(S,T)}\sum^\infty_{n=0}\knorm k_n\knorm_{\cV_{n+1}(S,T)}<\infty.
\end{equation*}
This completes the proof.
\end{proof}


\section{Stochastic Volterra integral equations}\label{section_SVIE}

In this section, we investigate linear SVIE \eqref{Intro_eq_SVIE}. More generally, we consider SVIEs with infinitely many iterated stochastic integrals of the following form:
\begin{equation}\label{SVIE_eq_GSVIE}
	X(t)=\varphi(t)+\int^t_SJ(t,s)X(s)\rd s+\sum^\infty_{n=1}\int^t_S\int^{t_1}_S\mathalpha{\cdots}\int^{t_{n-1}}_Sk_n(t,t_1,\mathalpha{\dots},t_n)X(t_n)\rd W(t_n)\mathalpha{\cdots}\rd W(t_1),\ t\in(S,T),
\end{equation}
with suitable choices of kernels $J\in L^2_{\bF,\ast}(\Delta_2(S,T);\bR^{d\times d})$ and $k_n\in\cV_{n+1}(S,T;\bR^{d\times d})$, $n\in\bN$. In this paper, we call the above equation a \emph{generalized SVIE}. This class of SVIEs includes fractional SDEs with ``noisy memory'', as demonstrated in the following example.


\begin{exam}\label{SVIE_exam_FSDEnoisy}
Consider the following fractional equation of order $\alpha\in(\frac{1}{2},1]$:
\begin{equation}\label{SVIE_eq_FSDEnoisy}
	\begin{cases}\displaystyle
	\CD X(t)=j(t)X(t)+X_1(t)+X_2(t)+b(t)+\{k(t)X(t)+X_1(t)+X_2(t)+\sigma(t)\}\frac{\mathrm{d}W(t)}{\mathrm{d}t},\ t\in(0,T),\\\displaystyle
	X_1(t)=\int^t_0\ell_1(t,s)X(s)\rd s,\ X_2(t)=\int^t_0\ell_2(t,s)X(s)\rd W(s),\ t\in(0,T),\\\displaystyle
	X(0)=x_0,
	\end{cases}
\end{equation}
where $j(t),k(t),\ell_1(t,s),\ell_2(t,s)$ are $\bR^{d\times d}$-valued, deterministic and bounded coefficients, $b,\sigma\in L^2_\bF(0,T;\bR^d)$, and $x_0\in\bR^d$. This is a fractional SDE which has a delay $X_1(t)=\int^t_0\ell_1(t,s)X(s)\rd s$ and a ``noisy memory'' $X_2(t)=\int^t_0\ell_2(t,s)X(s)\rd W(s)$. The term noisy memory was first introduced by Dahl et al~\cite{DaMoOkRo16} in the classical SDEs framework. By the definition, an adapted process $X$ is said to be a solution to \eqref{SVIE_eq_FSDEnoisy} if it satisfies the integral equation
\begin{align*}
	&X(t)=x_0+\frac{1}{\Gamma(\alpha)}\int^t_0(t-s)^{\alpha-1}\Bigl\{j(s)X(s)+\int^s_0\ell_1(s,r)X(r)\rd r+\int^s_0\ell_2(s,r)X(r)\rd W(r)+b(s)\}\rd s\\
	&\hspace{1cm}+\frac{1}{\Gamma(\alpha)}\int^t_0(t-s)^{\alpha-1}\Bigl\{k(s)X(s)+\int^s_0\ell_1(s,r)X(r)\rd r+\int^s_0\ell_2(s,r)X(r)\rd W(r)+\sigma(s)\Bigr\}\rd W(s)
\end{align*}
for $t\in(0,T)$. Applying the stochastic Fubini theorem, we see that the above integral equation becomes
\begin{align*}
	&X(t)=x_0+\frac{1}{\Gamma(\alpha)}\int^t_0(t-s)^{\alpha-1}b(s)\rd s+\frac{1}{\Gamma(\alpha)}\int^t_0(t-s)^{\alpha-1}\sigma(s)\rd W(s)\\
	&\hspace{1cm}+\frac{1}{\Gamma(\alpha)}\int^t_0\Bigl\{(t-s)^{\alpha-1}j(s)+\int^t_s(t-r)^{\alpha-1}\ell_1(r,s)\rd r+\int^t_s(t-r)^{\alpha-1}\ell_1(r,s)\rd W(r)\Bigr\}X(s)\rd s\\
	&\hspace{1cm}+\frac{1}{\Gamma(\alpha)}\int^t_0\Bigl\{(t-t_1)^{\alpha-1}k(t_1)+\int^t_{t_1}(t-s)^{\alpha-1}\ell_2(s,t_1)\rd s\Bigr\}X(t_1)\rd W(t_1)\\
	&\hspace{1cm}+\frac{1}{\Gamma(\alpha)}\int^t_0\int^{t_1}_0(t-t_1)^{\alpha-1}\ell_2(t_1,t_2)X(t_2)\rd W(t_2)\rd W(t_1),\ t\in(0,T).
\end{align*}
Thus, the fractional SDE with noisy memory \eqref{SVIE_eq_FSDEnoisy} can be seen as a generalized SVIE~\eqref{SVIE_eq_GSVIE} with the free term
\begin{equation*}
	\varphi(t)=x_0+\frac{1}{\Gamma(\alpha)}\int^t_0(t-s)^{\alpha-1}b(s)\rd s+\frac{1}{\Gamma(\alpha)}\int^t_0(t-s)^{\alpha-1}\sigma(s)\rd W(s)
\end{equation*}
and the kernels
\begin{gather*}
	J(t,s)=\frac{1}{\Gamma(\alpha)}\Bigl\{(t-s)^{\alpha-1}j(s)+\int^t_s(t-r)^{\alpha-1}\ell_1(r,s)\rd r+\int^t_s(t-r)^{\alpha-1}\ell_1(r,s)\rd W(r)\Bigr\},\\
	k_1(t,t_1)=\frac{1}{\Gamma(\alpha)}\Bigl\{(t-t_1)^{\alpha-1}k(t_1)+\int^t_{t_1}(t-s)^{\alpha-1}\ell_2(s,t_1)\rd s\Bigr\},\\
	k_2(t,t_1,t_2)=\frac{1}{\Gamma(\alpha)}(t-t_1)^{\alpha-1}\ell_2(t_1,t_2),\ k_n=0,\ n\geq3.
\end{gather*}
Note that since $J(t,s)$ is $\cF^s_t$-measurable, $J$ is in $L^2_{\bF,\ast}(\Delta_2(0,T);\bR^{d\times d})$.
\end{exam}

In order to investigate generalized SVIE~\eqref{SVIE_eq_GSVIE}, we write it as an algebraic equation on $L^2_\bF(S,T;\bR^d)$ as follows:
\begin{equation}\label{SVIE_eq_SVIEalg}
	X=\varphi+J\ast X+K\star X,
\end{equation}
where $J\ast X$ corresponds to the convolution in terms of the Lebesgue integral, and $K\star X$ with $K=\sum^\infty_{n=1}\fW_n[k_n]$ corresponds to the infinite sum of the stochastic convolutions with respect to the iterated stochastic integrals.


\subsection{$\star$-product}\label{subsection_star}

In this subsection, we introduce a suitable class of $K$ and define the $\star$-product $K\star X$ appearing in equation \eqref{SVIE_eq_SVIEalg}.


\begin{defi}\label{star_defi_starkernel}
We define the space $\cK_\bF(S,T;\bR^{d\times d})$ of \emph{$\star$-Volterra kernels} by
\begin{equation*}
	\cK_\bF(S,T;\bR^{d\times d}):=\{K\in L^2_\bF(S,T;\bR^{d\times d})\,|\,\fF_n[K]\in\cV_{n+1}(S,T;\bR^{d\times d}),\ \forall\,n\in\bN_0,\ \knorm K\knorm_{\cK_\bF(S,T)}<\infty\},
\end{equation*}
where
\begin{equation*}
	\knorm K\knorm_{\cK_\bF(S,T)}:=\sum^\infty_{n=0}\knorm\fF_n[K]\knorm_{\cV_{n+1}(S,T)}.
\end{equation*}
\end{defi}


\begin{rem}
We emphasize that each $\star$-Volterra kernel $K\in\cK_\bF(S,T;\bR^{d\times d})$ is an $\bF^S$-adapted and square-integrable stochastic process (with one time-parameter), see \cref{detkernel_lemm_kernel3}. Intuitively speaking, the Volterra structure corresponds to the adaptedness of the process $K$. Since each $(\cV_{n+1}(S,T;\bR^{d\times d}),\knorm\cdot\knorm_{\cV_{n+1}(S,T)})$ is a Banach space, we see that $(\cK_\bF(S,T;\bR^{d\times d}),\knorm\cdot\knorm_{\cK_\bF(S,T)})$ is a Banach space.
\end{rem}


\begin{defi}\label{star_defi_starprod}
For each $\star$-Volterra kernel $K\in\cK_\bF(S,T;\bR^{d\times d})$ and $\xi\in L^2_\bF(S,T;\bR^{d\times d_1})$ with $d_1\in\bN$, we define the \emph{$\star$-product} $K\star\xi\in L^2_\bF(S,T;\bR^{d\times d_1})$ by the following Wiener--It\^{o} chaos expansion:
\begin{equation*}
	\fF_n[K\star\xi]:=\sum^n_{k=0}\fF_{n-k}[K]\triangleright\fF_k[\xi],\ n\in\bN_0.
\end{equation*}
\end{defi}

The following lemma shows that the $\star$-product is well-defined and makes the space $\cK_\bF(S,T;\bR^{d\times d})$ a Banach algebra.


\begin{prop}\label{star_prop_Banachalg}
$(\cK_\bF(S,T;\bR^{d\times d}),\knorm\cdot\knorm_{\cK_\bF(S,T)},\star)$ is a (real) unitary Banach algebra with the unit $I_d$, where $I_d\in\bR^{d\times d}$ denotes the identity matrix which can be viewed as an element of $\cK_\bF(S,T;\bR^{d\times d})$. Furthermore, $(L^2_\bF(S,T;\bR^d),\|\cdot\|_{L^2_\bF(S,T)})$ is a left Banach module over $\cK_\bF(S,T;\bR^{d\times d})$ with respect to the $\star$-product. In other words, $(\cK_\bF(S,T;\bR^{d\times d}),\knorm\cdot\knorm_{\cK_\bF(S,T)})$ is a Banach space, and for each $K,K_1,K_2,K_3\in\cK_\bF(S,T;\bR^{d\times d})$, $\xi,\xi_1,\xi_2\in L^2_\bF(S,T;\bR^d)$ and $\alpha\in\bR$, the following hold:
\begin{gather*}
	K_1\star K_2\in\cK_\bF(S,T;\bR^{d\times d})\ \text{and}\ \knorm K_1\star K_2\knorm_{\cK_\bF(S,T)}\leq\knorm K_1\knorm_{\cK_\bF(S,T)}\knorm K_2\knorm_{\cK_\bF(S,T)},\\
	(K_1\star K_2)\star K_3=K_1\star (K_2\star K_3),\\
	K_1\star(K_2+K_3)=K_1\star K_2+K_1\star K_3,\\
	(K_1+K_2)\star K_3=K_1\star K_3+K_2\star K_3,\\
	\alpha(K_1\star K_2)=(\alpha K_1)\star K_2=K_1\star(\alpha K_2),\\
	K\star\xi\in L^2_\bF(S,T;\bR^d)\ \text{and}\ \|K\star\xi\|_{L^2_\bF(S,T)}\leq\knorm K\knorm_{\cK_\bF(S,T)}\|\xi\|_{L^2_\bF(S,T)},\\
	(K_1\star K_2)\star\xi=K_1\star(K_2\star\xi),\\
	K\star(\xi_1+\xi_2)=K\star\xi_1+K\star\xi_2,\\
	(K_1+K_2)\star\xi=K_1\star\xi+K_2\star\xi,\\
	\alpha(K\star\xi)=(\alpha K)\star\xi=K\star(\alpha\xi),\\
	I_d\in\cK_\bF(S,T;\bR^{d\times d}),\ \knorm I_d\knorm_{\cK_\bF(S,T)}=1,\ K\star I_d=I_d\star K=K,\ \text{and}\ I_d\star\xi=\xi.
\end{gather*}
\end{prop}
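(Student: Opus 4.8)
The plan is to reduce everything to the deterministic $\triangleright$-product and the chaos isometry of \cref{chaos_prop_chaos}. The key observation is that, via the unitary isomorphism $\xi \mapsto \{\fF_n[\xi]\}_{n\in\bN_0}$, both $L^2_\bF(S,T;\bR^{d\times d_1})$ and $\cK_\bF(S,T;\bR^{d\times d})$ become weighted $\ell^2$/$\ell^1$-type sequence spaces, and the $\star$-product is nothing but a Cauchy-type convolution of sequences in which scalar multiplication is replaced by the $\triangleright$-product. So the whole proposition is an instance of the following pattern: a convolution of a sequence that is summable in a submultiplicative norm with a sequence that is square-summable yields a square-summable sequence, with the expected norm bound. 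Concretely, for $K \in \cK_\bF$ and $\xi \in L^2_\bF(S,T;\bR^{d\times d_1})$, \cref{detkernel_lemm_kernel2}(ii) gives $\|\fF_{n-k}[K]\triangleright\fF_k[\xi]\|_{L^2(\Delta_{n+1}(S,T))} \le \knorm\fF_{n-k}[K]\knorm_{\cV_{n-k+1}(S,T)}\,\|\fF_k[\xi]\|_{L^2(\Delta_{k+1}(S,T))}$; summing over $k$, applying the triangle inequality in $L^2(\Delta_{n+1}(S,T))$, then summing over $n$ and using the discrete Young inequality $\|a \ast b\|_{\ell^2} \le \|a\|_{\ell^1}\|b\|_{\ell^2}$ with $a_n := \knorm\fF_n[K]\knorm_{\cV_{n+1}(S,T)}$ and $b_n := \|\fF_n[\xi]\|_{L^2(\Delta_{n+1}(S,T))}$, yields
\[
	\sum^\infty_{n=0}\|\fF_n[K\star\xi]\|^2_{L^2(\Delta_{n+1}(S,T))} \le \Bigl(\sum^\infty_{n=0}\knorm\fF_n[K]\knorm_{\cV_{n+1}(S,T)}\Bigr)^2\sum^\infty_{n=0}\|\fF_n[\xi]\|^2_{L^2(\Delta_{n+1}(S,T))} = \knorm K\knorm^2_{\cK_\bF(S,T)}\|\xi\|^2_{L^2_\bF(S,T)}.
\]
By \cref{chaos_prop_chaos}(i) this both shows $K\star\xi$ is a well-defined element of $L^2_\bF(S,T;\bR^{d\times d_1})$ and gives the module norm bound. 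The same computation with $\xi = K_2$, using $\|\cdot\|_{L^2(\Delta_{n+1}(S,T))} \le \sqrt{d(T-S)}\,\knorm\cdot\knorm_{\cV_{n+1}(S,T)}$ replaced by the sharper $\cV$-estimate \cref{detkernel_lemm_kernel2}(i), gives $K_1 \star K_2 \in \cK_\bF(S,T;\bR^{d\times d})$ with $\knorm K_1 \star K_2\knorm_{\cK_\bF(S,T)} \le \knorm K_1\knorm_{\cK_\bF(S,T)}\knorm K_2\knorm_{\cK_\bF(S,T)}$ — one checks $\fF_n[K_1\star K_2] = \sum_{k=0}^n \fF_{n-k}[K_1]\triangleright\fF_k[K_2] \in \cV_{n+1}(S,T;\bR^{d\times d})$ termwise and then sums the $\cV_{n+1}$-norms, which factor as a genuine Cauchy product of the two $\ell^1$-sequences.

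Next, associativity. I would prove $\fF_n[(K_1\star K_2)\star K_3] = \fF_n[K_1\star(K_2\star K_3)]$ for every $n$ by expanding both sides into a double sum $\sum_{i+j+l=n}(\fF_i[K_1]\triangleright\fF_j[K_2])\triangleright\fF_l[K_3]$ versus $\sum_{i+j+l=n}\fF_i[K_1]\triangleright(\fF_j[K_2]\triangleright\fF_l[K_3])$, so it suffices to show the $\triangleright$-product is associative on deterministic kernels, i.e. $(f\triangleright g)\triangleright h = f\triangleright(g\triangleright h)$. This is immediate from \cref{detkernel_defi_triangleright}: both equal $(t_0,\dots,t_{m+n+p}) \mapsto f(t_0,\dots,t_m)\,g(t_m,\dots,t_{m+n})\,h(t_{m+n},\dots,t_{m+n+p})$ by associativity of matrix multiplication, with the degenerate cases $m=0$, $n=0$ or $p=0$ checked separately (and noting $\fF_0[\cdot]$ acts by pointwise left-multiplication, consistent with the $m=0$ clause). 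The same termwise expansion, now with $K_3$ replaced by $\xi$, gives $(K_1\star K_2)\star\xi = K_1\star(K_2\star\xi)$. Bilinearity (distributivity over $+$ and $\bR$-homogeneity) is trivial: $\fF_n$ is $\bR$-linear and $\triangleright$ is $\bR$-bilinear, so every identity of the form $K_1\star(K_2+K_3)=K_1\star K_2+K_1\star K_3$, $\alpha(K_1\star K_2)=(\alpha K_1)\star K_2 = K_1\star(\alpha K_2)$, etc., holds chaos-component by chaos-component.

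For the unit: $I_d$ viewed as a constant process has chaos expansion $\fF_0[I_d] = I_d$ (the constant function) and $\fF_n[I_d] = 0$ for $n \ge 1$, so $\fF_0[I_d] \in \cV_1(S,T;\bR^{d\times d})$ with $\knorm\fF_0[I_d]\knorm_{\cV_1(S,T)} = |I_d|_\op = 1$, hence $I_d \in \cK_\bF(S,T;\bR^{d\times d})$ and $\knorm I_d\knorm_{\cK_\bF(S,T)} = 1$. Then $\fF_n[I_d\star K] = \sum_{k=0}^n \fF_{n-k}[I_d]\triangleright\fF_k[K] = \fF_0[I_d]\triangleright\fF_n[K] = I_d\cdot\fF_n[K] = \fF_n[K]$ using only the $n-k=0$ term, and similarly $\fF_n[K\star I_d] = \fF_n[K]\triangleright\fF_0[I_d] = \fF_n[K]$ and $\fF_n[I_d\star\xi] = \fF_n[\xi]$; by uniqueness of chaos coefficients (\cref{chaos_prop_chaos}(i)) this gives $K\star I_d = I_d\star K = K$ and $I_d\star\xi = \xi$. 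The main obstacle — really the only place needing care — is the interchange of the (finite, over $k$) sum with the various iterated integrals and the bookkeeping of index ranges in the associativity expansion; since the inner sum defining $\fF_n[K\star\xi]$ is finite, no limiting argument is needed there, and the only genuine convergence issue, namely that $\sum_n \|\fF_n[K\star\xi]\|^2_{L^2(\Delta_{n+1}(S,T))} < \infty$, is exactly what the discrete Young inequality above delivers. I would also remark that $\cK_\bF(S,T;\bR^{d\times d})$ is complete — a Cauchy sequence there has each $\cV_{n+1}$-component Cauchy, hence convergent since $\cV_{n+1}(S,T;\bR^{d\times d})$ is a Banach space (\cref{detkernel_lemm_kernel}(i)), and the $\ell^1$-summability of norms passes to the limit by Fatou — so that $(\cK_\bF(S,T;\bR^{d\times d}),\knorm\cdot\knorm_{\cK_\bF(S,T)},\star)$ is indeed a unitary Banach algebra and $L^2_\bF(S,T;\bR^d)$ a left Banach module over it.
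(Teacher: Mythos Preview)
Your proposal is correct and follows essentially the same approach as the paper: both reduce the norm bounds to \cref{detkernel_lemm_kernel2} plus the triangle inequality and Young's convolution inequality on the sequences of chaos norms, and both obtain associativity by expanding $\fF_n[(K_1\star K_2)\star K_3]$ into a double sum and reindexing (the paper does the reindexing explicitly, while you factor it through the observation that $\triangleright$ itself is associative, which is the same content). Your treatment is slightly more explicit about the unit $I_d$ and the completeness of $\cK_\bF$, which the paper leaves as ``clear'' or remarks elsewhere, but there is no substantive difference in strategy.
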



\begin{proof}
Let $K,K_1,K_2,K_3\in\cK_\bF(S,T;\bR^{d\times d})$, $\xi,\xi_1,\xi_2\in L^2_\bF(S,T;\bR^d)$ and $\alpha\in\bR$. By the triangle inequality, \cref{detkernel_lemm_kernel2} and Young's convolution inequality, we have
\begin{align*}
	\sum^\infty_{n=0}\Bknorm\sum^n_{k=0}\fF_{n-k}[K_1]\triangleright\fF_k[K_2]\Bknorm_{\cV_{n+1}(S,T)}&\leq\sum^\infty_{n=0}\sum^n_{k=0}\knorm\fF_{n-k}[K_1]\triangleright\fF_k[K_2]\knorm_{\cV_{n+1}(S,T)}\\
	&\leq\sum^\infty_{n=0}\sum^n_{k=0}\knorm\fF_{n-k}[K_1]\knorm_{\cV_{n-k+1}(S,T)}\knorm\fF_k[K_2]\knorm_{\cV_{k+1}(S,T)}\\
	&\leq\sum^\infty_{n=0}\knorm\fF_n[K_1]\knorm_{\cV_{n+1}(S,T)}\sum^\infty_{n=0}\knorm\fF_n[K_2]\knorm_{\cV_{n+1}(S,T)}\\
	&=\knorm K_1\knorm_{\cK_\bF(S,T)}\knorm K_2\knorm_{\cK_\bF(S,T)}<\infty.
\end{align*}
Therefore, $K_1\star K_2\in\cK_\bF(S,T;\bR^{d\times d})$ is well-defined and satisfies
\begin{equation*}
	\knorm K_1\star K_2\knorm_{\cK_\bF(S,T)}\leq\knorm K_1\knorm_{\cK_\bF(S,T)}\knorm K_2\knorm_{\cK_\bF(S,T)}.
\end{equation*}
Similarly, we have
\begin{align*}
	\sum^\infty_{n=0}\Bigl\|\sum^n_{k=0}\fF_{n-k}[K]\triangleright\fF_k[\xi]\Bigr\|^2_{L^2(\Delta_{n+1}(S,T))}&\leq\sum^\infty_{n=0}\Bigl(\sum^n_{k=0}\|\fF_{n-k}[K]\triangleright\fF_k[\xi]\|_{L^2(\Delta_{n+1}(S,T))}\Bigr)^2\\
	&\leq\sum^\infty_{n=0}\Bigl(\sum^n_{k=0}\knorm\fF_{n-k}[K]\knorm_{\cV_{n-k+1}(S,T)}\|\fF_k[\xi]\|_{L^2(\Delta_{k+1}(S,T))}\Bigr)^2\\
	&\leq\Bigl(\sum^\infty_{n=0}\knorm\fF_n[K]\knorm_{\cV_{n+1}(S,T)}\Bigr)^2\sum^\infty_{n=0}\|\fF_n[\xi]\|^2_{L^2(\Delta_{n+1}(S,T))}\\
	&=\knorm K\knorm^2_{\cK_\bF(S,T)}\|\xi\|^2_{L^2_\bF(S,T)}<\infty.
\end{align*}
Therefore, $K\star\xi\in L^2_\bF(S,T;\bR^d)$ is well-defined and satisfies
\begin{equation*}
	\|K\star\xi\|_{L^2_\bF(S,T)}\leq\knorm K\knorm_{\cK_\bF(S,T)}\|\xi\|_{L^2_\bF(S,T)}. 
\end{equation*}
Furthermore, for each $n\in\bN_0$,
\begin{align*}
	\fF_n[(K_1\star K_2)\star K_3]&=\sum^n_{k=0}\fF_{n-k}[K_1\star K_2]\triangleright\fF_k[K_3]\\
	&=\sum^n_{k=0}\Bigl(\sum^{n-k}_{\ell=0}\fF_{n-k-\ell}[K_1]\triangleright\fF_\ell[K_2]\Bigr)\triangleright\fF_k[K_3]\\
	&=\sum^n_{k=0}\sum^n_{\ell=k}\fF_{n-\ell}[K_1]\triangleright(\fF_{\ell-k}[K_2]\triangleright\fF_k[K_3])\\
	&=\sum^n_{\ell=0}\fF_{n-\ell}[K_1]\triangleright\Bigl(\sum^\ell_{k=0}\fF_{\ell-k}[K_2]\triangleright\fF_k[K_3]\Bigr)\\
	&=\sum^n_{\ell=0}\fF_{n-\ell}[K_1]\triangleright\fF_\ell[K_2\star K_3]\\
	&=\fF_n[K_1\star(K_2\star K_3)],
\end{align*}
and hence $(K_1\star K_2)\star K_3=K_1\star(K_2\star K_3)$. Similarly, we can show that $(K_1\star K_2)\star\xi=K_1\star(K_2\star\xi)$. The remaining assertions are clear.
\end{proof}

Although the $\star$-product is defined by means of the Wiener--It\^{o} chaos expansion, it can be represented in terms of iterated stochastic integrals.


\begin{prop}\label{star_prop_integral}
For each $\star$-Volterra kernel $K\in\cK_\bF(S,T;\bR^{d\times d})$ and $\xi\in L^2_\bF(S,T;\bR^{d\times d_1})$ with $d_1\in\bN$, it holds that
\begin{equation}\label{star_eq_integral}
	(K\star\xi)(t)=\fF_0[K](t)\xi(t)+\sum^\infty_{n=1}\int^t_S\int^{t_1}_S\mathalpha{\cdots}\int^{t_{n-1}}_S\fF_n[K](t,t_1,\mathalpha{\dots},t_n)\xi(t_n)\rd W(t_n)\mathalpha{\cdots}\rd W(t_1),\ t\in(S,T),
\end{equation}
where the infinite sum in the right-hand side converges in $L^2_\bF(S,T;\bR^{d\times d_1})$.
\end{prop}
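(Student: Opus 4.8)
The plan is to regard both sides of \eqref{star_eq_integral} as values of bounded linear operators on $L^2_\bF(S,T;\bR^{d\times d_1})$ and to check that these operators agree, first on each single--chaos process and then, by continuity and linearity, in general.

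First I would check that the right--hand side of \eqref{star_eq_integral} is well defined. For each $n\in\bN$, since $\fF_n[K]\in\cV_{n+1}(S,T;\bR^{d\times d})$ is deterministic and $\xi$ is $\bF^S$--adapted, the random field $\fF_n[K]\triangleright\xi\colon(t_0,\dots,t_n)\mapsto\fF_n[K](t_0,\dots,t_n)\xi(t_n)$ lies in $L^2_\bF(\Delta_{n+1}(S,T);\bR^{d\times d_1})$ and satisfies $\|\fF_n[K]\triangleright\xi\|_{L^2_\bF(\Delta_{n+1}(S,T))}\le\knorm\fF_n[K]\knorm_{\cV_{n+1}(S,T)}\|\xi\|_{L^2_\bF(S,T)}$ (the obvious random--field analogue of \cref{detkernel_lemm_kernel2}~(ii)); hence its $n$--th iterated stochastic integral $\fW_n[\fF_n[K]\triangleright\xi]\in L^2_\bF(S,T;\bR^{d\times d_1})$ is defined, with $L^2_\bF(S,T)$--norm bounded by the same quantity. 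Because $\sum_{n=0}^\infty\knorm\fF_n[K]\knorm_{\cV_{n+1}(S,T)}=\knorm K\knorm_{\cK_\bF(S,T)}<\infty$, the series $\Theta(\xi):=\fF_0[K]\xi+\sum_{n=1}^\infty\fW_n[\fF_n[K]\triangleright\xi]$ converges absolutely in $L^2_\bF(S,T;\bR^{d\times d_1})$, and $\xi\mapsto\Theta(\xi)$ is linear and bounded with operator norm at most $\knorm K\knorm_{\cK_\bF(S,T)}$. By \cref{star_prop_Banachalg}, $\xi\mapsto K\star\xi$ is likewise linear and bounded. So it suffices to prove $K\star\xi=\Theta(\xi)$ for the single--chaos processes $\xi=\fW_k[f]$, $k\in\bN_0$, $f\in L^2(\Delta_{k+1}(S,T);\bR^{d\times d_1})$: indeed, for a general $\xi$, using the $L^2_\bF(S,T)$--convergence $\xi=\lim_{N\to\infty}\sum_{k=0}^N\fW_k[\fF_k[\xi]]$ of \cref{chaos_prop_chaos}, continuity and (finite) linearity give $K\star\xi=\lim_{N\to\infty}\sum_{k=0}^N K\star\fW_k[\fF_k[\xi]]=\lim_{N\to\infty}\sum_{k=0}^N\Theta(\fW_k[\fF_k[\xi]])=\Theta(\xi)$.

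The one genuinely analytic ingredient is a nesting (associativity) identity for iterated stochastic integrals: for $m,k\in\bN_0$, $g\in\cV_{m+1}(S,T;\bR^{d\times d})$ and $f\in L^2(\Delta_{k+1}(S,T);\bR^{d\times d_1})$,
\[
	\fW_m\bigl[g\triangleright\fW_k[f]\bigr]=\fW_{m+k}[g\triangleright f],
\]
where on the left $\fW_m$ is the $m$--th iterated stochastic integral of the adapted random field $g\triangleright\fW_k[f]\in L^2_\bF(\Delta_{m+1}(S,T);\bR^{d\times d_1})$. This follows by unwinding \cref{chaos_defi_iterated}: in $\fW_{m+k}[g\triangleright f](t)=\int^t_S\cdots\int^{t_{m+k-1}}_S g(t,\dots,t_m)f(t_m,\dots,t_{m+k})\rd W(t_{m+k})\cdots\rd W(t_1)$ the innermost $k$ integrations are performed with $t,t_1,\dots,t_m$ frozen, and since $g(t,\dots,t_m)$ does not involve the inner variables it factors out, leaving $g(t,\dots,t_m)\,\fW_k[f](t_m)$; the remaining $m$ integrations then produce $\fW_m[g\triangleright\fW_k[f]](t)$. (For $m=0$ this is just $\fF$--free pulling of $g(t)$ through the integral.)

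Finally I would assemble the single--chaos case. Fix $\xi=\fW_k[f]$, so that $\fF_j[\xi]=f$ if $j=k$ and $\fF_j[\xi]=0$ otherwise. On one hand, \cref{star_defi_starprod} gives $\fF_N[K\star\xi]=\fF_{N-k}[K]\triangleright f$ for $N\ge k$ and $\fF_N[K\star\xi]=0$ for $N<k$, whence $K\star\xi=\sum_{N\ge k}\fW_N[\fF_{N-k}[K]\triangleright f]=\sum_{m=0}^\infty\fW_{m+k}[\fF_m[K]\triangleright f]$. On the other hand, applying the nesting identity with $g=\fF_m[K]$,
\[
	\Theta(\xi)=\sum_{m=0}^\infty\fW_m\bigl[\fF_m[K]\triangleright\fW_k[f]\bigr]=\sum_{m=0}^\infty\fW_{m+k}[\fF_m[K]\triangleright f]=K\star\xi.
\]
This, combined with the second paragraph, proves the proposition. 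I expect the nesting identity to be the only step requiring real care — it is essentially Fubini for iterated It\^o integrals, and the delicate part is stating it cleanly in the multi--parameter notation of \cref{chaos_defi_iterated}; everything else is bookkeeping with the chaos decomposition and bounded linear operators.
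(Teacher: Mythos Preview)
Your proposal is correct and follows essentially the same approach as the paper: both define the right-hand side as a bounded linear operator via the estimate $\|\fW_n[\fF_n[K]\triangleright\xi]\|_{L^2_\bF(S,T)}\le\knorm\fF_n[K]\knorm_{\cV_{n+1}(S,T)}\|\xi\|_{L^2_\bF(S,T)}$, reduce to the chaos components of $\xi$ by continuity, and use the nesting identity $\fW_m[\fF_m[K]\triangleright\fW_k[f]]=\fW_{m+k}[\fF_m[K]\triangleright f]$ to match the chaos expansion of $K\star\xi$. The only cosmetic difference is the order of the double sum: the paper expands $\xi$ inside each $\Phi^K_n$ and then rearranges $\sum_{n}\sum_{m}\fW_{n+m}[\fF_n[K]\triangleright\fF_m[\xi]]$, whereas you fix a single chaos $\xi=\fW_k[f]$ first and then sum over $m$.
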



\begin{proof}
We define $\Phi^K_0[\xi](t):=\fF_0[K](t)\xi(t)$, $t\in(S,T)$, and
\begin{equation*}
	\Phi^K_n[\xi](t):=\int^t_S\int^{t_1}_S\mathalpha{\cdots}\int^{t_{n-1}}_S\fF_n[K](t,t_1,\mathalpha{\dots},t_n)\xi(t_n)\rd W(t_n)\mathalpha{\cdots}\rd W(t_1),\ t\in(S,T),
\end{equation*}
for each $n\in\bN$. Note that the integrand in the above iterated stochastic integral is in $L^2_\bF(\Delta_{n+1}(S,T);\bR^{d\times d_1})$, and thus $\Phi^K_n[\xi]\in L^2_\bF(S,T;\bR^{d\times d_1})$ is well-defined. By using the isometry of the stochastic integral, similar calculations as in \cref{detkernel_lemm_kernel2} show that $\|\Phi^K_n[\xi]\|_{L^2_\bF(S,T)}\leq\knorm\fF_n[K]\knorm_{\cV_{n+1}(S,T)}\|\xi\|_{L^2_\bF(S,T)}$ for each $n\in\bN_0$. Thus, each $\Phi^K_n$ is a bounded linear operator on $L^2_\bF(S,T;\bR^{d\times d_1})$ with the operator norm $\|\Phi^K_n\|_{\mathrm{op}}\leq\knorm\fF_n[K]\knorm_{\cV_{n+1}(S,T)}$. Since $\sum^\infty_{n=0}\knorm\fF_n[K]\knorm_{\cV_{n+1}(S,T)}=\knorm K\knorm_{\cK_\bF(S,T)}<\infty$, the infinite sum in the right-hand side of \eqref{star_eq_integral} converges in $L^2_\bF(S,T;\bR^{d\times d_1})$. Noting the continuity of the linear operator $\Phi^K_n$ and considering the Wiener--It\^{o} chaos expansion of $\xi\in L^2_\bF(S,T;\bR^{d\times d_1})$, we have
\begin{equation*}
	\Phi^K_n[\xi]=\Phi^K_n\Bigl[\sum^\infty_{m=0}\fW_m[\xi]\Bigr]=\sum^\infty_{m=0}\Phi^K_n[\fW_m[\xi]],\ n\in\bN_0,
\end{equation*}
where $\fW_m[\xi]:=\fW_m[\fF_m[\xi]]$ for each $m\in\bN_0$. Observe that, for each $n,m\in\bN$,
\begin{align*}
	&\Phi^K_n[\fW_m[\xi]](t)\\
	&=\int^t_S\int^{t_1}_S\mathalpha{\cdots}\int^{t_{n-1}}_S\fF_n[K](t,t_1,\mathalpha{\dots},t_n)\\*
	&\hspace{1cm}\times\Bigl(\int^{t_n}_S\int^{t_{n+1}}_S\mathalpha{\cdots}\int^{t_{n+m-1}}_S\fF_m[\xi](t_n,t_{n+1},\mathalpha{\dots},t_{n+m})\rd W(t_{n+m})\mathalpha{\cdots}\rd W(t_{n+1})\Bigr)\rd W(t_n)\mathalpha{\cdots}\rd W(t_1)\\
	&=\int^t_S\int^{t_1}_S\mathalpha{\cdots}\int^{t_{n+m-1}}_S(\fF_n[K]\triangleright\fF_m[\xi])(t,t_1,\mathalpha{\dots},t_{n+m})\rd W(t_{n+m})\mathalpha{\cdots}\rd W(t_1)\\
	&=\fW_{n+m}[\fF_n[K]\triangleright\fF_m[\xi]](t),\ t\in(S,T),
\end{align*}
and hence $\Phi^K_n[\fW_m[\xi]]=\fW_{n+m}[\fF_n[K]\triangleright\fF_m[\xi]]$ for each $n,m\in\bN$. Clearly, these relations hold when $n=0$ or $m=0$. Therefore, the right-hand side of \eqref{star_eq_integral} becomes
\begin{align*}
	\sum^\infty_{n=0}\sum^\infty_{m=0}\Phi^K_n[\fW_m[\xi]]&=\sum^\infty_{n=0}\sum^\infty_{m=0}\fW_{n+m}[\fF_n[K]\triangleright\fF_m[\xi]]\\
	&=\sum^\infty_{n=0}\sum^n_{k=0}\fW_n[\fF_{n-k}[K]\triangleright\fF_k[\xi]]\\
	&=\sum^\infty_{n=0}\fW_n\Bigl[\sum^n_{k=0}\fF_{n-k}[K]\triangleright\fF_k[\xi]\Bigr]\\
	&=\sum^\infty_{n=0}\fW_n[\fF_n[K\star\xi]]\\
	&=K\star\xi.
\end{align*}
This completes the proof.
\end{proof}


\subsection{$\ast$-product}\label{subsection_ast}

Next, we introduce a suitable class of $J$ and define the $\ast$-product $J\ast X$ appearing in equation \eqref{SVIE_eq_SVIEalg}.


\begin{defi}\label{ast_defi_astprod}
For each $j\in L^2_\cF(\Delta_{n+2}(S,T);\bR^{d_1\times d_2})$, $f\in L^2_\cF(S,T;\bR^{d_2\times d_3})$ with $n\in\bN_0$ and $d_1,d_2,d_3\in\bN$, we define the \emph{$\ast$-product} $j\ast f:\Omega\times\Delta_{n+1}(S,T)\to\bR^{d_1\times d_3}$ by
\begin{equation*}
	(j\ast f)(t_0,t_1,\mathalpha{\dots},t_n):=\int^{t_n}_Sj(t_0,t_1,\mathalpha{\dots},t_n,s)f(s)\rd s
\end{equation*}
for $(t_0,t_1,\mathalpha{\dots},t_n)\in\Delta_{n+1}(S,T)$. Also, for each $g\in L^2_\cF(\Delta_{m+2}(S,T);\bR^{d_2\times d_3})$ with $m\in\bN_0$, we define the $\ast$-product $j\ast g:\Omega\times\Delta_{n+m+2}(S,T)\to\bR^{d_1\times d_3}$ by
\begin{equation*}
	(j\ast g)(t_0,t_1,\mathalpha{\dots},t_{n+m+1}):=\int^{t_n}_{t_{n+1}}j(t_0,t_1,\mathalpha{\dots},t_n,s)g(s,t_{n+1},\mathalpha{\dots},t_{n+m+1})\rd s
\end{equation*}
for $(t_0,t_1,\mathalpha{\dots},t_{n+m+1})\in\Delta_{n+m+2}(S,T)$.
\end{defi}

Recall that $L^2_{\bF,\ast}(\Delta_2(S,T);\bR^{d_1\times d_2})$ is the space of $\Xi\in L^2_\cF(\Delta_2(S,T);\bR^{d_1\times d_2})$ with $\Xi(t,s)$ being $\cF^s_t$-measurable for each $(t,s)\in\Delta_2(S,T)$. Also, recall the definitions of $\bfW_n$ and $\bfF_n$ (see \cref{chaos_defi_iterated}).


\begin{lemm}\label{ast_lemm_chaos}
For each $\Xi\in  L^2_{\bF,\ast}(\Delta_2(S,T);\bR^{d_1\times d_2})$ and $\xi\in L^2_\bF(S,T;\bR^{d_2\times d_3})$ with $d_1,d_2,d_3\in\bN$, the $\ast$-product $\Xi\ast\xi$ is in $L^2_\bF(S,T;\bR^{d_1\times d_3})$, and the Wiener--It\^{o} chaos expansion satisfies
\begin{equation*}
	\fF_n[\Xi\ast\xi]=\sum^n_{k=0}\bfF_{n-k}[\Xi]\ast\fF_k[\xi],\ n\in\bN_0.
\end{equation*}
Furthermore, for each $\Xi_1\in L^2_{\bF,\ast}(\Delta_2(S,T);\bR^{d_1\times d_2})$ and $\Xi_2\in L^2_{\bF,\ast}(\Delta_2(S,T);\bR^{d_2\times d_3})$, the $\ast$-product $\Xi_1\ast \Xi_2$ is in $L^2_{\bF,\ast}(\Delta_2(S,T);\bR^{d_1\times d_3})$, and the Wiener--It\^{o} chaos expansion satisfies
\begin{equation*}
	\bfF_n[\Xi_1\ast \Xi_2]=\sum^n_{k=0}\bfF_{n-k}[\Xi_1]\ast\bfF_k[\Xi_2],\ n\in\bN_0.
\end{equation*}
\end{lemm}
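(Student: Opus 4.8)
The plan is to establish that $\ast$ acts as a bounded bilinear operation between the relevant $L^2$-spaces, to verify the chaos-expansion formulae on single Wiener chaos components, and then to conclude by density and continuity; the two assertions are handled by the same scheme. \emph{Step 1 (bounds and membership).} For $\Xi\in L^2_{\bF,\ast}(\Delta_2(S,T);\bR^{d_1\times d_2})$ and $\xi\in L^2_\bF(S,T;\bR^{d_2\times d_3})$ one has, by \cref{ast_defi_astprod}, $(\Xi\ast\xi)(t)=\int^t_S\Xi(t,s)\xi(s)\rd s$. The key observation is that for fixed $s$ the matrix $\Xi(t,s)$ is $\cF^s_t$-measurable while $\xi(s)$ is $\cF^S_s$-measurable, and these two $\sigma$-algebras are generated by the Brownian increments over the disjoint intervals $[s,t]$ and $[S,s]$; hence $\Xi(t,s)$ and $\xi(s)$ are independent. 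Applying Minkowski's integral inequality to bring the $L^2(\Omega)$-norm inside the $\rd s$-integral, then independence, then the Cauchy--Schwarz inequality in $s$, I would obtain
\begin{equation*}
\|(\Xi\ast\xi)(t)\|_{L^2(\Omega)}\le\int^t_S\bigl(\bE[|\Xi(t,s)|^2_\op]\bigr)^{1/2}\bigl(\bE[|\xi(s)|^2]\bigr)^{1/2}\rd s\le\Bigl(\int^t_S\bE[|\Xi(t,s)|^2]\rd s\Bigr)^{1/2}\|\xi\|_{L^2_\bF(S,T)},
\end{equation*}
and after squaring and integrating in $t$, $\|\Xi\ast\xi\|_{L^2_\bF(S,T)}\le\|\Xi\|_{L^2_{\bF,\ast}(\Delta_2(S,T))}\|\xi\|_{L^2_\bF(S,T)}$; together with the elementary inclusion $\cF^s_t\vee\cF^S_s\subseteq\cF^S_t$ for $S\le s\le t$ (which makes the integral defining $\Xi\ast\xi$ $\bF^S$-adapted) this gives $\Xi\ast\xi\in L^2_\bF(S,T;\bR^{d_1\times d_3})$. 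The same computation applied to $(\Xi_1\ast\Xi_2)(t,s')=\int^t_{s'}\Xi_1(t,s)\Xi_2(s,s')\rd s$, now with $\Xi_1(t,s)$ and $\Xi_2(s,s')$ depending on the disjoint increments over $[s,t]$ and $[s',s]$, gives $\Xi_1\ast\Xi_2\in L^2_{\bF,\ast}(\Delta_2(S,T);\bR^{d_1\times d_3})$ with $\|\Xi_1\ast\Xi_2\|\le\|\Xi_1\|\,\|\Xi_2\|$ (the required $\cF^{s'}_t$-measurability uses $\cF^s_t\vee\cF^{s'}_s\subseteq\cF^{s'}_t$).

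\emph{Step 2 (single chaos components).} Next I would reduce everything to the identities
\begin{equation*}
\bfW_a[g_a]\ast\fW_b[f_b]=\fW_{a+b}[g_a\ast f_b],\qquad\bfW_a[g_a]\ast\bfW_b[g_b]=\bfW_{a+b}[g_a\ast g_b],
\end{equation*}
valid for all $a,b\in\bN_0$ and all deterministic kernels $g_a\in L^2(\Delta_{a+2}(S,T))$, $f_b\in L^2(\Delta_{b+1}(S,T))$, $g_b\in L^2(\Delta_{b+2}(S,T))$, together with $g_a\ast f_b\in L^2(\Delta_{a+b+1}(S,T))$ and $g_a\ast g_b\in L^2(\Delta_{a+b+2}(S,T))$. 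To prove the first identity I would write $\bfW_a[g_a](t,s)$ as an iterated It\^o integral in the variables $t_1>\cdots>t_a$ over $[s,t]$ and use the stochastic Fubini theorem (together with the elementary fact that $\cF^S_s$-measurable factors pass through the It\^o integrals over $[s,t]$) to move the outer Lebesgue integral $\int^t_S(\cdots)\rd s$ inward through the successive stochastic integrals; substituting $\fW_b[f_b](s)=\int^s_S\cdots\rd W$ and doing the same for the ``lower'' stochastic integrals, the $s$-integration collapses to precisely the convolution $\int^{t_a}_{u_1}g_a(t,t_1,\dots,t_a,s)f_b(s,u_1,\dots,u_b)\rd s$ that defines $g_a\ast f_b$ in \cref{ast_defi_astprod} (with $t_{a+1},\dots,t_{a+b}$ relabelled $u_1,\dots,u_b$), so that the whole expression equals $\fW_{a+b}[g_a\ast f_b](t)$. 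The required $L^2$-bounds $\|g_a\ast f_b\|_{L^2(\Delta_{a+b+1}(S,T))}\le\|g_a\|_{L^2(\Delta_{a+2}(S,T))}\|f_b\|_{L^2(\Delta_{b+1}(S,T))}$ (and similarly for $g_a\ast g_b$) are direct Cauchy--Schwarz and Fubini computations on the explicit integrals; the second identity is entirely analogous.

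\emph{Step 3 (limit, conclusion, and the main obstacle).} Finally, writing $\Xi=\sum^\infty_{a=0}\bfW_a[\bfF_a[\Xi]]$ and $\xi=\sum^\infty_{b=0}\fW_b[\fF_b[\xi]]$ (convergent in the respective $L^2$-spaces by \cref{chaos_prop_chaos}) and invoking the continuity of the bilinear map $\ast$ from Step 1, Step 2 gives $\Xi\ast\xi=\sum_{a}\sum_{b}\fW_{a+b}[\bfF_a[\Xi]\ast\fF_b[\xi]]$ in $L^2_\bF(S,T)$; projecting onto the $n$-th Wiener chaos (a continuous projection which, by orthogonality, annihilates every summand with $a+b\ne n$) and using injectivity of $\fW_n$ on $L^2(\Delta_{n+1}(S,T))$ yields $\fF_n[\Xi\ast\xi]=\sum^n_{k=0}\bfF_{n-k}[\Xi]\ast\fF_k[\xi]$ for $n\in\bN_0$, and the expansion of $\bfF_n[\Xi_1\ast\Xi_2]$ follows the same way with $\bfW_n$ in place of $\fW_n$. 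The technical heart is Step 2: the outer Lebesgue variable $s$ is simultaneously the lower endpoint of the ``upper'' stochastic integrals coming from $\bfW_a[g_a]$ and the upper endpoint of the ``lower'' ones coming from $\fW_b[f_b]$, so the applications of the stochastic Fubini theorem must be organised and iterated carefully --- peeling off one $\rd W$ at a time --- to see that the $s$-integration produces exactly the convolution of \cref{ast_defi_astprod} rather than something merely similar. Keeping the index bookkeeping of the deterministic $\ast$-products straight, and checking the joint integrability that licenses each interchange, is the other place that needs care; the independence-from-disjoint-increments observation in Step 1 is what lets the $L^2$-boundedness go through with only the square-integrability of $\Xi$ and $\xi$ in hand.
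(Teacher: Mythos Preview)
Your proposal is correct and follows essentially the same line as the paper: independence of $\Xi(t,s)$ and $\xi(s)$ (from disjoint Brownian increments) together with Minkowski and Cauchy--Schwarz for the $L^2$-bound, and the stochastic Fubini theorem for the chaos identity. The only difference is organisational: you isolate the single-chaos identity $\bfW_a[g_a]\ast\fW_b[f_b]=\fW_{a+b}[g_a\ast f_b]$ as a separate step and then pass to limits via the bilinear continuity established in Step~1, whereas the paper carries out the whole computation in one pass on the full chaos expansions of $\Xi$ and $\xi$; both routes rest on exactly the same Fubini manipulation.
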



\begin{proof}
We prove the first assertion. The second can be proved by the same way. Note that $\Xi(t,s)$ is $\cF^s_t$-measurable, and hence it is independent of $\cF_s$. By using Minkowski's inequality and H\"{o}lder's inequality, we have
\begin{align*}
	\bE\Bigl[\int^T_S\Bigl(\int^t_S|\Xi(t,s)||\xi(s)|\rd s\Bigr)^2\rd t\Bigr]^{1/2}&\leq\int^T_S\bE\Bigl[\int^T_s|\Xi(t,s)|^2|\xi(s)|^2\rd t\Bigr]^{1/2}\rd s\\
	&=\int^T_S\bE\Bigl[\int^T_s|\Xi(t,s)|^2\rd t\,|\xi(s)|^2\Bigr]^{1/2}\rd s\\
	&=\int^T_S\bE\Bigl[\int^T_s|\Xi(t,s)|^2\rd t\Bigr]^{1/2}\bE\bigl[|\xi(s)|^2\bigr]^{1/2}\rd s\\
	&\leq\Bigl(\int^T_S\bE\Bigl[\int^T_s|\Xi(t,s)|^2\rd t\Bigr]\rd s\Bigr)^{1/2}\Bigl(\int^T_S\bE\bigl[|\xi(s)|^2\bigr]\rd s\Bigr)^{1/2}\\
	&<\infty.
\end{align*}
Thus, the $\ast$-product $\Xi\ast\xi\in L^2_\cF(S,T;\bR^{d_1\times d_3})$ is well-defined, and the operations $\Xi\mapsto \Xi\ast\xi$ and $\xi\mapsto \Xi\ast\xi$ are continuous. Noting the adaptedness, we have $\Xi\ast\xi\in L^2_\bF(S,T;\bR^{d_1\times d_3})$. Furthermore, by using the stochastic Fubini's theorem, we have
\begin{align*}
	&(\Xi\ast\xi)(t)=\Bigl\{\Bigl(\sum^\infty_{n=0}\bfW_n[\Xi]\Bigr)\ast\Bigl(\sum^\infty_{m=0}\fW_m[\xi]\Bigr)\Bigr\}(t)=\sum^\infty_{n=0}\sum^n_{k=0}(\bfW_{n-k}[\Xi]\ast\fW_k[\xi])(t)\\
	&=\sum^\infty_{n=0}\sum^n_{k=0}\int^t_S\int^t_s\int^{t_1}_s\mathalpha{\cdots}\int^{t_{n-k-1}}_s\bfF_{n-k}[\Xi](t,t_1,\mathalpha{\dots},t_{n-k},s)\rd W(t_{n-k})\mathalpha{\cdots}\rd W(t_1)\\
	&\hspace{2cm}\times\int^s_S\int^{t_{n-k+1}}_S\mathalpha{\cdots}\int^{t_{n-1}}_S\fF_k[\xi](s,t_{n-k+1},\mathalpha{\dots},t_n)\rd W(t_n)\mathalpha{\cdots}\rd W(t_{n-k+1})\rd s\\
	&=\sum^\infty_{n=0}\sum^n_{k=0}\int^t_S\int^t_s\int^{t_1}_s\mathalpha{\cdots}\int^{t_{n-k-1}}_s\int^s_S\int^{t_{n-k+1}}_S\mathalpha{\cdots}\int^{t_{n-1}}_S\bfF_{n-k}[\Xi](t,t_1,\mathalpha{\dots},t_{n-k},s)\fF_k[\xi](s,t_{n-k+1},\mathalpha{\dots},t_n)\\
	&\hspace{5cm}\rd W(t_n)\mathalpha{\cdots}\rd W(t_{n-k+1})\rd W(t_{n-k})\mathalpha{\cdots}\rd W(t_1)\rd s\\
	&=\sum^\infty_{n=0}\int^t_S\int^{t_1}_S\mathalpha{\cdots}\int^{t_{n-1}}_S\sum^n_{k=0}\int^{t_{n-k}}_{t_{n-k+1}}\bfF_{n-k}[\Xi](t,t_1,\mathalpha{\dots},t_{n-k},s)\fF_k[\xi](s,t_{n-k+1},\mathalpha{\dots},t_n)\rd s\rd W(t_n)\mathalpha{\cdots}\rd W(t_1)\\
	&=\sum^\infty_{n=0}\int^t_S\int^{t_1}_S\mathalpha{\cdots}\int^{t_{n-1}}_S\sum^n_{k=0}(\bfF_{n-k}[\Xi]\ast\fF_k[\xi])(t,t_1,\mathalpha{\dots},t_n)\rd W(t_n)\mathalpha{\cdots}\rd W(t_1),
\end{align*}
which implies that $\fF_n[\Xi\ast\xi]=\sum^n_{k=0}\bfF_{n-k}[\Xi]\ast\fF_k[\xi]$ for any $n\in\bN_0$. This completes the proof.
\end{proof}


\begin{defi}\label{ast_defi_astkernel}
We define the space $\cJ_\bF(S,T;\bR^{d\times d})$ of \emph{$\ast$-Volterra kernels} by
\begin{equation*}
	\cJ_\bF(S,T;\bR^{d\times d}):=\{J\in L^2_{\bF,\ast}(\Delta_2(S,T);\bR^{d\times d})\,|\,\ \knorm J\knorm_{\cJ_\bF(S,T)}<\infty\},
\end{equation*}
where
\begin{equation*}
	\knorm J\knorm_{\cJ_\bF(S,T)}:=\sum^\infty_{n=0}\Bigl(\int_{\Delta_{n+2}(S,T)}|\bfF_n[J](t)|^2_\op\rd t\Bigr)^{1/2}.
\end{equation*}
\end{defi}

The following proposition shows algebraic properties of the space $\cJ_\bF(S,T;\bR^{d\times d})$ and the $\ast$-product.


\begin{prop}\label{ast_prop_Banachalg}
$(\cJ_\bF(S,T;\bR^{d\times d}),\knorm\cdot\knorm_{\cJ_\bF(S,T)},\ast)$ is a (real) Banach algebra without unit. Furthermore, $(L^2_\bF(S,T;\bR^d),\|\cdot\|_{L^2_\bF(S,T)})$ and $(\cK_\bF(S,T;\bR^{d\times d}),\knorm\cdot\knorm_{\cK_\bF(S,T)})$ are left Banach modules over $\cJ_\bF(S,T;\bR^{d\times d})$ with respect to the $\ast$-product. In other words, $(\cJ_\bF(S,T;\bR^{d\times d}),\knorm\cdot\knorm_{\cJ_\bF(S,T)})$ is a Banach space, and for each $J,J_1,J_2,J_3\in\cJ_\bF(S,T;\bR^{d\times d})$, $\xi,\xi_1,\xi_2\in L^2_\bF(S,T;\bR^d)$, $K,K_1,K_2\in\cK_\bF(S,T;\bR^{d\times d})$ and $\alpha\in\bR$, the following hold:
\begin{gather}
	J_1\ast J_2\in\cJ_\bF(S,T;\bR^{d\times d})\ \text{and}\ \knorm J_1\ast J_2\knorm_{\cJ_\bF(S,T)}\leq\knorm J_1\knorm_{\cJ_\bF(S,T)}\knorm J_2\knorm_{\cJ_\bF(S,T)},\label{ast_eq_JJ}\\
	(J_1\ast J_2)\ast J_3=J_1\ast (J_2\ast J_3),\nonumber\\
	J_1\ast(J_2+J_3)=J_1\ast J_2+J_1\ast J_3,\nonumber\\
	(J_1+J_2)\ast J_3=J_1\ast J_3+J_2\ast J_3,\nonumber\\
	\alpha(J_1\ast J_2)=(\alpha J_1)\ast J_2=J_1\ast(\alpha J_2),\nonumber\\
	J\ast\xi\in L^2_\bF(S,T;\bR^d)\ \text{and}\ \|J\ast\xi\|_{L^2_\bF(S,T)}\leq\knorm J\knorm_{\cJ_\bF(S,T)}\|\xi\|_{L^2_\bF(S,T)},\label{ast_eq_Jxi}\\
	(J_1\ast J_2)\ast\xi=J_1\ast(J_2\ast\xi),\nonumber\\
	J\ast(\xi_1+\xi_2)=J\ast\xi_1+J\ast\xi_2,\nonumber\\
	(J_1+J_2)\ast\xi=J_1\ast\xi+J_2\ast\xi,\nonumber\\
	\alpha(J\ast\xi)=(\alpha J)\ast\xi=J\ast(\alpha\xi),\nonumber\\
	J\ast K\in \cK_\bF(S,T;\bR^{d\times d})\ \text{and}\ \knorm J\ast K\knorm_{\cK_\bF(S,T)}\leq\knorm J\knorm_{\cJ_\bF(S,T)}\knorm K\knorm_{\cK_\bF(S,T)},\label{ast_eq_JK}\\
	(J_1\ast J_2)\ast K=J_1\ast(J_2\ast K),\nonumber\\
	J\ast(K_1+K_2)=J\ast K_1+J\ast K_2,\nonumber\\
	(J_1+J_2)\ast K=J_1\ast K+J_2\ast K,\nonumber\\
	\alpha(J\ast K)=(\alpha J)\ast K=J\ast(\alpha K).\nonumber
\end{gather}
\end{prop}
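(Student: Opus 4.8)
The plan is to reduce everything to two facts. First, \cref{ast_lemm_chaos} already guarantees that $J_1\ast J_2\in L^2_{\bF,\ast}(\Delta_2(S,T);\bR^{d\times d})$, $J\ast\xi\in L^2_\bF(S,T;\bR^d)$ and $J\ast K\in L^2_\bF(S,T;\bR^{d\times d})$, and it expresses the Wiener--It\^{o} chaos coefficients of these products as Cauchy products of the chaos coefficients of the factors under the \emph{deterministic} $\ast$-product. Second, one needs a submultiplicativity estimate for that deterministic $\ast$-product. Granted these, the norm bounds \eqref{ast_eq_JJ}, \eqref{ast_eq_Jxi}, \eqref{ast_eq_JK} (and with them the memberships $J_1\ast J_2\in\cJ_\bF(S,T;\bR^{d\times d})$ and $J\ast K\in\cK_\bF(S,T;\bR^{d\times d})$) follow from the triangle inequality together with $\ell^1$- and $\ell^1$--$\ell^2$-convolution inequalities, exactly as in the proof of \cref{star_prop_Banachalg}; the associativity identities follow from Fubini's theorem; and the remaining assertions (distributivity over addition, scalar homogeneity, completeness, and absence of a unit) are routine.

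The first and main step is therefore the key estimate for the deterministic $\ast$-product. For a measurable $j\colon\Delta_{p+2}(S,T)\to\bR^{d_1\times d_2}$ set $\nu_p(j):=\bigl(\int_{\Delta_{p+2}(S,T)}|j|_\op^2\rd t\bigr)^{1/2}$. The claim is that, for all $p,q\in\bN_0$, whenever $\nu_p(j)<\infty$ one has: (i) for $g\in L^2(\Delta_{q+2}(S,T);\bR^{d_2\times d_3})$, $j\ast g\in L^2(\Delta_{p+q+2}(S,T);\bR^{d_1\times d_3})$ with $\nu_{p+q}(j\ast g)\le\nu_p(j)\nu_q(g)$; (ii) for $f\in L^2(\Delta_{q+1}(S,T);\bR^{d_2\times d_3})$, $j\ast f\in L^2(\Delta_{p+q+1}(S,T);\bR^{d_1\times d_3})$ with $\|j\ast f\|_{L^2(\Delta_{p+q+1}(S,T))}\le\nu_p(j)\|f\|_{L^2(\Delta_{q+1}(S,T))}$; (iii) for $\kappa\in\cV_{q+1}(S,T;\bR^{d_2\times d_3})$ (with $\cV$ extended to rectangular matrices via $|\cdot|_\op$), $j\ast\kappa\in\cV_{p+q+1}(S,T;\bR^{d_1\times d_3})$ with $\knorm j\ast\kappa\knorm_{\cV_{p+q+1}(S,T)}\le\nu_p(j)\knorm\kappa\knorm_{\cV_{q+1}(S,T)}$. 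All three are proved in the same way: apply the Cauchy--Schwarz inequality in the variable $s$ integrated in the definition of $\ast$; estimate the matrix product using $|AB|_\op\le|A|_\op|B|_\op$ (respectively $|AB|\le|A|_\op|B|$); dominate $\int_{t_{p+1}}^{t_p}|j(\dots,s)|_\op^2\rd s$ by $\int_S^{t_p}|j(\dots,s)|_\op^2\rd s$; and integrate with Fubini's theorem, using the fact that, for fixed $(t_0,\dots,t_p)$, the remaining integration variables $(s,t_{p+1},\dots)$ range over a simplex $\Delta_m(S,t_p)$ for a suitable $m$ (respectively $\Delta_m(t,t_p)$, with $t$ the last variable, in case (iii)), over which the relevant norm of $g$, $f$, or $\kappa$ is dominated by its norm over $\Delta_m(S,T)$ since $t_p\le T$. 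I expect case (iii), with its essential supremum and the need to perform the integrations in the right order, to be the main technical obstacle; cases (i)--(ii) are simpler and close in spirit to \cref{detkernel_lemm_kernel2}.

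Granting the estimate, \cref{ast_lemm_chaos} supplies, for every $n\in\bN_0$,
\begin{equation*}
	\fF_n[J\ast\xi]=\sum_{k=0}^n\bfF_{n-k}[J]\ast\fF_k[\xi],\qquad\bfF_n[J_1\ast J_2]=\sum_{k=0}^n\bfF_{n-k}[J_1]\ast\bfF_k[J_2],\qquad\fF_n[J\ast K]=\sum_{k=0}^n\bfF_{n-k}[J]\ast\fF_k[K];
\end{equation*}
in the last identity each summand belongs to $\cV_{n+1}(S,T;\bR^{d\times d})$ by (iii), hence so does $\fF_n[J\ast K]$. Taking the appropriate norms, summing over $n\in\bN_0$, and applying the triangle inequality followed by Young's convolution inequality in the forms $\sum_n\bigl\|\sum_k a_{n-k}b_k\bigr\|\le\bigl(\sum_n a_n\bigr)\bigl(\sum_n b_n\bigr)$ and $\sum_n\bigl(\sum_k a_{n-k}c_k\bigr)^2\le\bigl(\sum_n a_n\bigr)^2\sum_n c_n^2$ then yields \eqref{ast_eq_JJ}, \eqref{ast_eq_Jxi}, \eqref{ast_eq_JK}; here one uses that $\knorm\cdot\knorm_{\cJ_\bF(S,T)}$, $\|\cdot\|_{L^2_\bF(S,T)}$, and $\knorm\cdot\knorm_{\cK_\bF(S,T)}$ are precisely the corresponding weighted $\ell^1$- and $\ell^2$-norms of the chaos coefficients. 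In particular $J_1\ast J_2\in\cJ_\bF(S,T;\bR^{d\times d})$ and $J\ast K\in\cK_\bF(S,T;\bR^{d\times d})$. This step reproduces the analogous computations in the proof of \cref{star_prop_Banachalg} almost verbatim.

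Finally, the algebraic identities. For associativity I would argue directly by Fubini's theorem: since $(J_1\ast J_2)(t,s)=\int_s^tJ_1(t,r)J_2(r,s)\rd r$, for $\xi\in L^2_\bF(S,T;\bR^d)$ (and identically for $K\in\cK_\bF(S,T;\bR^{d\times d})$),
\begin{equation*}
	\bigl((J_1\ast J_2)\ast\xi\bigr)(t)=\int_S^t\Bigl(\int_s^tJ_1(t,r)J_2(r,s)\rd r\Bigr)\xi(s)\rd s=\int_S^tJ_1(t,r)\Bigl(\int_S^rJ_2(r,s)\xi(s)\rd s\Bigr)\rd r=\bigl(J_1\ast(J_2\ast\xi)\bigr)(t),
\end{equation*}
and likewise $(J_1\ast J_2)\ast J_3=J_1\ast(J_2\ast J_3)$, the interchange of integrations being justified by the estimates above; alternatively, one proves associativity of the deterministic $\ast$-product by Fubini and lifts it to the stochastic level by comparing chaos expansions through \cref{ast_lemm_chaos}, as in \cref{star_prop_Banachalg}. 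Distributivity over addition and scalar homogeneity are immediate from the definition of $\ast$. Completeness of $(\cJ_\bF(S,T;\bR^{d\times d}),\knorm\cdot\knorm_{\cJ_\bF(S,T)})$ holds because, under the isometry $J\mapsto(\bfF_n[J])_{n\in\bN_0}$ of \cref{chaos_prop_chaos}(ii), it is the weighted $\ell^1$-direct sum of the Banach spaces $L^2(\Delta_{n+2}(S,T);\bR^{d\times d})$ carrying the $|\cdot|_\op$-based $L^2$-norm (which is equivalent to the Frobenius $L^2$-norm). Lastly, $\cJ_\bF(S,T;\bR^{d\times d})$ has no multiplicative unit: if $E$ were one, then testing $E\ast X=X$ on the constant kernel $X\equiv I_d\in\cJ_\bF(S,T;\bR^{d\times d})$ would force $\int_s^t\bfF_0[E](t,s')\rd s'=I_d$ for a.e.\ $(t,s)\in\Delta_2(S,T)$, and differentiating in $s$ would give $\bfF_0[E]=0$ a.e., whence $I_d=0$, a contradiction.
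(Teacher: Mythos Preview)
Your proposal is correct and follows essentially the same architecture as the paper: establish a submultiplicativity estimate for the deterministic $\ast$-product on chaos coefficients, feed it into \cref{ast_lemm_chaos}, and conclude the norm bounds via Young's convolution inequality exactly as in \cref{star_prop_Banachalg}; associativity is obtained directly by Fubini's theorem. The only methodological difference is in the deterministic estimate itself: the paper first applies Minkowski's integral inequality in the outer simplex variables and then H\"{o}lder's inequality in the convolution variable $s$, whereas you apply Cauchy--Schwarz in $s$ immediately and then enlarge the domain of integration before factoring via Fubini. Both routes give the identical constant, and your variant is in fact slightly more economical. Your explicit arguments for completeness of $\cJ_\bF(S,T;\bR^{d\times d})$ and for the absence of a unit are correct additions that the paper leaves implicit.
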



\begin{proof}
We prove \eqref{ast_eq_JJ}, \eqref{ast_eq_Jxi} and \eqref{ast_eq_JK}. In this proof, we use the notation
\begin{equation*}
	\|f\|_{L^2(\Delta_n(S,T)),\op}:=\Bigl(\int_{\Delta_n(S,T)}|f(t)|^2_\op\rd t\Bigr)^{1/2}
\end{equation*}
for each $f\in L^2(\Delta_n(S,T);\bR^{d\times d})$. First, we observe that, for each $j\in L^2(\Delta_{n+2}(S,T);\bR^{d\times d})$ and $f\in L^2(\Delta_{m+2}(S,T);\bR^d)$ with $n,m\in\bN_0$,
\begin{align*}
	&\|j\ast f\|^2_{L^2(\Delta_{n+m+2}(S,T))}\\
	&\leq\int^T_S\int^{t_0}_S\int^{t_1}_S\mathalpha{\cdots}\int^{t_{n+m}}_S\Bigl(\int^{t_n}_{t_{n+1}}|j(t_0,t_1,\mathalpha{\dots},t_n,s)|_\op|f(s,t_{n+1},\mathalpha{\dots},t_{n+m+1})|\rd s\Bigr)^2\rd t_{n+m+1}\mathalpha{\cdots}\rd t_1\rd t_0\\
	&=\int_{\Delta_{m+1}(S,T)}\Bigl\{\int_{\Delta_{n+1}(\max r,T)}\Bigl(\int^{\min t}_{\max r}|j(t,s)|_\op|f(s,r)|\rd s\Bigr)^2\rd t\Bigr\}\rd r\\
	&\leq\int_{\Delta_{m+1}(S,T)}\Bigl\{\int^T_{\max r}\Bigl(\int_{\Delta_{n+1}(s,T)}|j(t,s)|^2_\op|f(s,r)|^2\rd t\Bigr)^{1/2}\rd s\Bigr\}^2\rd r\\
	&=\int_{\Delta_{m+1}(S,T)}\Bigl\{\int^T_{\max r}\Bigl(\int_{\Delta_{n+1}(s,T)}|j(t,s)|^2_\op\rd t\Bigr)^{1/2}|f(s,r)|\rd s\Bigr\}^2\rd r\\
	&\leq\int_{\Delta_{m+1}(S,T)}\Bigl(\int^T_{\max r}\int_{\Delta_{n+1}(s,T)}|j(t,s)|^2_\op\rd t\rd s\Bigr)\,\Bigl(\int^T_{\max r}|f(s,r)|^2\rd s\Bigr)\rd r\\
	&\leq\|j\|^2_{L^2(\Delta_{n+2}(S,T)),\op}\|f\|^2_{L^2(\Delta_{m+2}(S,T))},
\end{align*}
where we used Minkowski's inequality in the second inequality and H\"{o}lder's inequality in the third inequality. This implies that
\begin{equation*}
	\|j\ast f\|_{L^2(\Delta_{n+m+2}(S,T))}\leq\|j\|_{L^2(\Delta_{n+2}(S,T)),\op}\|f\|_{L^2(\Delta_{m+2}(S,T))}.
\end{equation*}
Similarly, for each $f\in L^2(S,T;\bR^d)$, it holds that
\begin{equation*}
	\|j\ast f\|_{L^2(\Delta_{n+1}(S,T))}\leq\|j\|_{L^2(\Delta_{n+2}(S,T)),\op}\|f\|_{L^2(S,T)}.
\end{equation*}
Also, for each $j_1\in L^2(\Delta_{n+2}(S,T);\bR^{d\times d})$ and $j_2\in L^2(\Delta_{m+2}(S,T);\bR^{d\times d})$ with $n,m\in\bN_0$,
\begin{equation*}
	\|j_1\ast j_2\|_{L^2(\Delta_{n+m+2}(S,T)),\op}\leq\|j_1\|_{L^2(\Delta_{n+2}(S,T)),\op}\|j_2\|_{L^2(\Delta_{m+2}(S,T)),\op}.
\end{equation*}
By the same way, we can show that, for each $j\in L^2(\Delta_{n+2}(S,T);\bR^{d\times d})$ and $k\in\cV_{m+1}(S,T;\bR^{d\times d})$ with $n,m\in\bN_0$,
\begin{equation*}
	\knorm j\ast k\knorm_{\cV_{n+m+1}(S,T)}\leq\|j\|_{L^2(\Delta_{n+2}(S,T)),\op}\knorm k\knorm_{\cV_{m+1}(S,T)}.
\end{equation*}

Fix $J_1,J_2\in\cJ_\bF(S,T;\bR^{d\times d})$. Noting \cref{ast_lemm_chaos}, the above observations and Young's convolution inequality yield that
\begin{align*}
	\knorm J_1\ast J_2\knorm_{\cJ_\bF(S,T)}&=\sum^\infty_{n=0}\|\bfF_n[J_1\ast J_2]\|_{L^2(\Delta_{n+2}(S,T)),\op}\\
	&=\sum^\infty_{n=0}\Bigl\|\sum^n_{k=0}\bfF_{n-k}[J_1]\ast\bfF_k[J_2]\Bigr\|_{L^2(\Delta_{n+2}(S,T)),\op}\\
	&\leq\sum^\infty_{n=0}\sum^n_{k=0}\|\bfF_{n-k}[J_1]\ast\bfF_k[J_2]\|_{L^2(\Delta_{n+2}(S,T)),\op}\\
	&\leq\sum^\infty_{n=0}\sum^n_{k=0}\|\bfF_{n-k}[J_1]\|_{L^2(\Delta_{n-k+2}(S,T)),\op}\|\bfF_k[J_2]\|_{L^2(\Delta_{k+2}(S,T)),\op}\\
	&\leq\sum^\infty_{n=0}\|\bfF_n[J_1]\|_{L^2(\Delta_{n+2}(S,T)),\op}\sum^\infty_{n=0}\|\bfF_n[J_2]\|_{L^2(\Delta_{n+2}(S,T)),\op}\\
	&=\knorm J_1\knorm_{\cJ_\bF(S,T)}\knorm J_2\knorm_{\cJ_\bF(S,T)}.
\end{align*}
Thus, \eqref{ast_eq_JJ} holds. For each $J\in\cJ_\bF(S,T;\bR^{d\times d})$ and $\xi\in L^2_\bF(S,T;\bR^d)$, the isometry yields that
\begin{align*}
	\|J\ast\xi\|^2_{L^2_\bF(S,T)}&=\sum^\infty_{n=0}\|\fF_n[J\ast\xi]\|^2_{L^2(\Delta_{n+1}(S,T))}\\
	&=\sum^\infty_{n=0}\Bigl\|\sum^n_{k=0}\bfF_{n-k}[J]\ast\fF_k[\xi]\Bigr\|^2_{L^2(\Delta_{n+1}(S,T))}\\
	&\leq\sum^\infty_{n=0}\Bigl(\sum^n_{k=0}\|\bfF_{n-k}[J]\ast\fF_k[\xi]\|_{L^2(\Delta_{n+1}(S,T))}\Bigr)^2\\
	&\leq\sum^\infty_{n=0}\Bigl(\sum^n_{k=0}\|\bfF_{n-k}[J]\|_{L^2(\Delta_{n-k+2}(S,T)),\op}\|\fF_k[\xi]\|_{L^2(\Delta_{k+1}(S,T))}\Bigr)^2\\
	&\leq\Bigl(\sum^\infty_{n=0}\|\bfF_n[J]\|_{L^2(\Delta_{n+2}(S,T)),\op}\Bigr)^2\sum^\infty_{n=0}\|\fF_n[\xi]\|^2_{L^2(\Delta_{n+1}(S,T))}\\
	&=\knorm J\knorm^2_{\cJ_\bF(S,T)}\|\xi\|^2_{L^2_\bF(S,T)},
\end{align*}
which implies \eqref{ast_eq_Jxi}. Lastly, for each $J\in\cJ_\bF(S,T;\bR^{d\times d})$ and $K\in\cK_\bF(S,T;\bR^{d\times d})$,
\begin{align*}
	\knorm J\ast K\knorm_{\cK_\bF(S,T)}&=\sum^\infty_{n=0}\knorm\fF_n[J\ast K]\knorm_{\cV_{n+1}(S,T)}\\
	&=\sum^\infty_{n=0}\Bknorm\sum^n_{k=0}\bfF_{n-k}[J]\ast\fF_k[K]\Bknorm_{\cV_{n+1}(S,T)}\\
	&\leq\sum^\infty_{n=0}\sum^n_{k=0}\knorm\bfF_{n-k}[J]\ast\bfF_k[K]\knorm_{\cV_{n+1}(S,T)}\\
	&\leq\sum^\infty_{n=0}\sum^n_{k=0}\|\bfF_{n-k}[J]\|_{L^2(\Delta_{n-k+2}(S,T)),\op}\knorm\fF_k[K]\knorm_{\cV_{k+1}(S,T)}\\
	&\leq\sum^\infty_{n=0}\|\bfF_n[J]\|_{L^2(\Delta_{n+2}(S,T)),\op}\sum^\infty_{n=0}\knorm\fF_n[K]\knorm_{\cV_{n+1}(S,T)}\\
	&=\knorm J\knorm_{\cJ_\bF(S,T)}\knorm K\knorm_{\cK_\bF(S,T)}.
\end{align*}
Thus, \eqref{ast_eq_JK} holds.

For each $J_1,J_2,J_3\in\cJ_\bF(S,T;\bR^{d\times d})$, Fubini's theorem yields that
\begin{align*}
	((J_1\ast J_2)\ast J_3)(t,s)&=\int^t_s(J_1\ast J_2)(t,r)J_3(r,s)\rd r\\
	&=\int^t_s\int^t_rJ_1(t,u)J_2(u,r)\rd uJ_3(r,s)\rd r\\
	&=\int^t_sJ_1(t,u)\int^u_sJ_2(u,r)J_3(r,s)\rd r\rd u\\
	&=\int^t_sJ_1(t,u)(J_2\ast J_3)(u,s)\rd u\\
	&=(J_1\ast(J_2\ast J_3))(t,s)
\end{align*}
for $(t,s)\in\Delta_2(S,T)$. Thus, we have $(J_1\ast J_2)\ast J_3=J_1\ast (J_2\ast J_3)$. Similarly, we can show that $(J_1\ast J_2)\ast \xi=J_1\ast(J_2\ast \xi)$ and $(J_1\ast J_2)\ast K=J_1\ast(J_2\ast K)$ for $\xi\in L^2_\bF(S,T;\bR^d)$ and $K\in\cK_\bF(S,T;\bR^{d\times d})$. The other assertions are clear from the definition. We complete the proof.
\end{proof}

Next, we investigate algebraic relationships between the $\ast$- and the $\star$-products. As before, for each $\star$-Volterra kernel $K\in\cK_\bF(S,T;\bR^{d\times d})$ and $\Xi\in L^2_{\bF,\ast}(\Delta_2(S,T);\bR^{d\times d_1})$ with $d_1\in\bN$, we define the $\star$-product $K\star\Xi\in L^2_{\bF,\ast}(\Delta_2(S,T);\bR^{d\times d_1})$ by the Wiener--It\^{o} chaos expansion
\begin{equation*}
	\bfF_n[K\star \Xi]:=\sum^n_{k=0}\fF_{n-k}[K]\triangleright\bfF_k[\Xi],\ n\in\bN_0.
\end{equation*}
By \cref{star_prop_integral}, we see that
\begin{equation*}
	(K\star\Xi)(t,s)=\fF_0[K](t)\Xi(t,s)+\sum^\infty_{n=1}\int^t_s\int^{t_1}_s\mathalpha{\cdots}\int^{t_{n-1}}_s\fF_n[K](t,t_1,\mathalpha{\dots},t_n)\Xi(t_n,s)\rd W(t_n)\mathalpha{\cdots}\rd W(t_1)
\end{equation*}
for $(t,s)\in\Delta_2(S,T)$. Similar to \cref{star_prop_Banachalg}, $(\cJ_\bF(S,T;\bR^{d\times d}),\knorm\cdot\knorm_{\cJ_\bF(S,T)})$ is a left Banach module over $(\cK_\bF(S,T;\bR^{d\times d}),\knorm\cdot\knorm_{\cK_\bF(S,T)},\star)$. Furthermore, the following proposition shows that the order of the $\star$- and the $\ast$-products is exchangeable.


\begin{prop}\label{ast_prop_aststar}
For each $K\in\cK_\bF(S,T;\bR^{d\times d})$, $J\in\cJ_\bF(S,T;\bR^{d\times d})$ and $\xi\in L^2_\bF(S,T;\bR^{d\times d_1})$ with $d_1\in\bN$, it holds that
\begin{equation}\label{ast_eq_KJxi}
	K\star(J\ast\xi)=(K\star J)\ast\xi
\end{equation}
and
\begin{equation}\label{ast_eq_JKxi}
	J\ast(K\star\xi)=(J\ast K)\star\xi.
\end{equation}
\end{prop}


\begin{proof}
By the definition of the $\star$-product and \cref{ast_lemm_chaos}, we have, for each $n\in\bN_0$,
\begin{align*}
	\fF_n[K\star(J\ast\xi)]&=\sum^n_{k=0}\fF_{n-k}[K]\triangleright\fF_k[J\ast\xi]\\
	&=\sum^n_{k=0}\fF_{n-k}[K]\triangleright\Bigl(\sum^k_{\ell=0}\bfF_{k-\ell}[J]\ast\fF_\ell[\xi]\Bigr)\\
	&=\sum^n_{k=0}\sum^k_{\ell=0}(\fF_{n-k}[K]\triangleright\bfF_{k-\ell}[J])\ast\fF_\ell[\xi]\\
	&=\sum^n_{\ell=0}\Bigl(\sum^n_{k=\ell}\fF_{n-k}[K]\triangleright\bfF_{k-\ell}[J]\Bigr)\ast\fF_\ell[\xi]\\
	&=\sum^n_{\ell=0}\Bigl(\sum^{n-\ell}_{k=0}\fF_{n-\ell-k}[K]\triangleright\bfF_k[J]\Bigr)\ast\fF_\ell[\xi]\\
	&=\sum^n_{\ell=0}\bfF_{n-\ell}[K\star J]\ast\fF_\ell[\xi]\\
	&=\fF_n[(K\star J)\ast\xi].
\end{align*}
Thus, \eqref{ast_eq_KJxi} holds. Similarly,
\begin{align*}
	\fF_n[J\ast(K\star\xi)]&=\sum^n_{k=0}\bfF_{n-k}[J]\ast\fF_k[K\star\xi]\\
	&=\sum^n_{k=0}\bfF_{n-k}[J]\ast\Bigl(\sum^k_{\ell=0}\fF_{k-\ell}[K]\triangleright\fF_\ell[\xi]\Bigr)\\
	&=\sum^n_{k=0}\sum^k_{\ell=0}(\bfF_{n-k}[J]\ast\fF_{k-\ell}[K])\triangleright\fF_\ell[\xi]\\
	&=\sum^n_{\ell=0}\Bigl(\sum^n_{k=\ell}\bfF_{n-k}[J]\ast\fF_{k-\ell}[K]\Bigr)\triangleright\fF_\ell[\xi]\\
	&=\sum^n_{\ell=0}\Bigl(\sum^{n-\ell}_{k=0}\bfF_{n-\ell-k}[J]\ast\fF_k[K]\Bigr)\triangleright\fF_\ell[\xi]\\
	&=\sum^n_{\ell=0}\fF_{n-\ell}[J\ast K]\triangleright\fF_\ell[\xi]\\
	&=\fF_n[(J\ast K)\star\xi].
\end{align*}
Thus, \eqref{ast_eq_JKxi} holds.
\end{proof}


\begin{rem}
By the associative properties of the $\star$- and the $\ast$-products, we may use the notations $K_1\star K_2\star K_3$, $J_1\ast J_2\ast J_3$, $J\ast K\star\xi$, $K\star J\ast\xi$, and so on.
\end{rem}


\subsection{$\star$-resolvent, $\ast$-resolvent and $(\ast,\star)$-resolvent}\label{subsection_res}

As in the classical theory on deterministic Volterra equations (see the textbook~\cite{GrLoSt90}), the resolvent with respect to the $\star$- and the $\ast$-products play central roles in the study of linear SVIEs.


\begin{defi}\label{res_defi_res}
\begin{itemize}
\item[(i)]
For each $\star$-Volterra kernel $K\in\cK_\bF(S,T;\bR^{d\times d})$, we say that a $\star$-Volterra kernel $R\in\cK_\bF(S,T;\bR^{d\times d})$ is a \emph{$\star$-resolvent} of $K$ if it satisfies the following resolvent equation:
\begin{equation*}
	R=K+K\star R=K+R\star K.
\end{equation*}
\item[(ii)]
For each $\ast$-Volterra kernel $J\in\cJ_\bF(S,T;\bR^{d\times d})$, we say that a $\ast$-Volterra kernel $Q\in\cJ_\bF(S,T;\bR^{d\times d})$ is a \emph{$\ast$-resolvent} of $J$ if it satisfies the following resolvent equation:
\begin{equation*}
	Q=J+J\ast Q=J+Q\ast J.
\end{equation*}
\end{itemize}
\end{defi}

First, we show that the $\star$- and the $\ast$-resolvents are unique if they exist. More precisely, the following proposition holds.


\begin{prop}\label{res_prop_unique}
\begin{itemize}
\item[(i)]
Let $\star$-Volterra kernels $K,R_1,R_2\in\cK_\bF(S,T;\bR^{d\times d})$ be given. Suppose that $R_1=K+K\star R_1$ and $R_2=K+R_2\star K$. Then $R_1=R_2$. In particular, each $\star$-Volterra kernel has at most one $\star$-resolvent.
\item[(ii)]
Let $\ast$-Volterra kernels $J,Q_1,Q_2\in\cJ_\bF(S,T;\bR^{d\times d})$ be given. Suppose that $Q_1=J+J\ast Q_1$ and $Q_2=J+Q_2\ast J$. Then $Q_1=Q_2$. In particular, each $\ast$-Volterra kernel has at most one $\ast$-resolvent.
\end{itemize}
\end{prop}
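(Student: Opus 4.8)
The plan is to prove both parts by the same purely algebraic cancellation argument, which uses only the bilinearity and associativity of the relevant product (the norm bounds and completeness recorded in \cref{star_prop_Banachalg} and \cref{ast_prop_Banachalg} play no role beyond guaranteeing that all the products written below are well-defined elements of the respective spaces). Part (ii) will be obtained from part (i) simply by replacing $\star$ with $\ast$, $K$ with $J$, and $R_i$ with $Q_i$.

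For part (i), I would start from the two hypotheses and rewrite them as $K\star R_1=R_1-K$ and $R_2\star K=R_2-K$. Then I compute $R_2\star R_1$ by substituting the expression $R_1=K+K\star R_1$ into the second factor:
$R_2\star R_1=R_2\star(K+K\star R_1)=R_2\star K+R_2\star(K\star R_1)=R_2\star K+(R_2\star K)\star R_1$,
where the last equality is the associativity of the $\star$-product from \cref{star_prop_Banachalg}. Substituting $R_2\star K=R_2-K$ and expanding by bilinearity gives $R_2\star R_1=(R_2-K)+(R_2-K)\star R_1=R_2-K+R_2\star R_1-K\star R_1$. Cancelling the common term $R_2\star R_1$ leaves $0=R_2-K-K\star R_1$, and since $K\star R_1=R_1-K$ this reads $0=R_2-R_1$, i.e.\ $R_1=R_2$. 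The ``in particular'' statement follows immediately: if $R$ and $R'$ are both $\star$-resolvents of $K$, then $R=K+K\star R$ and $R'=K+R'\star K$, so applying what was just proved with $(R_1,R_2)=(R,R')$ yields $R=R'$.

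For part (ii), I would run the identical computation with the $\ast$-product, whose associativity and bilinearity are provided by \cref{ast_prop_Banachalg} (the fact that $\cJ_\bF(S,T;\bR^{d\times d})$ has no unit is irrelevant, since no unit is used). Explicitly, $Q_2\ast Q_1=Q_2\ast(J+J\ast Q_1)=Q_2\ast J+(Q_2\ast J)\ast Q_1=(Q_2-J)+(Q_2-J)\ast Q_1$; cancelling $Q_2\ast Q_1$ and using $J\ast Q_1=Q_1-J$ gives $Q_1=Q_2$, and uniqueness of the $\ast$-resolvent follows exactly as in part (i).

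I do not expect a real obstacle here; the only point that needs a moment's care is the bookkeeping in choosing to expand $R_2\star R_1$ (rather than $R_1\star R_2$) so that the left-sided resolvent identity for $R_1$ and the right-sided resolvent identity for $R_2$ are both usable, and making sure each re-association step such as $(R_2\star K)\star R_1=R_2\star(K\star R_1)$ — and its $\ast$-analogue $(Q_2\ast J)\ast Q_1=Q_2\ast(J\ast Q_1)$ — is exactly the associativity granted by \cref{star_prop_Banachalg} and \cref{ast_prop_Banachalg}.
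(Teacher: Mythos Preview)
Your proof is correct and is essentially the same purely algebraic cancellation argument as the paper's: the paper computes $R_2\star K\star R_1$ in two ways (as $R_2\star(R_1-K)$ and as $(R_2-K)\star R_1$) to obtain $R_2\star K=K\star R_1$ and hence $R_1=R_2$, which is just a slight rearrangement of your expansion of $R_2\star R_1$.
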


The above proposition holds for any algebra (see Lemma~3.3 in \cite{GrLoSt90}). Here, we give a proof for readers' convenience.


\begin{proof}[Proof of \cref{res_prop_unique}]
We prove (i). The assertion (ii) is proved by the same way. By the assumption, we have $R_2\star(R_1-K)=R_2\star K\star R_1=(R_2-K)\star R_1$. Cancelling out $R_2\star R_1$, one gets $R_2\star K=K\star R_1$. Thus, $R_2=K+R_2\star K=K+K\star R_1=R_1$.
\end{proof}


\begin{rem}
Let $J\in L^2(\Delta_2(S,T);\bR^{d\times d})$ be a deterministic $\ast$-Volterra kernel, and suppose that $Q\in\cJ_\bF(S,T;\bR^{d\times d})$ is a $\ast$-resolvent of $J$. Then we see that $\bE[Q]$ is also a $\ast$-resolvent of $J$. By the uniqueness of the $\ast$-resolvent, we have $Q=\bE[Q]$, and thus $Q$ is deterministic.
\end{rem}

Actually, the $\ast$-resolvent always exists.


\begin{prop}\label{res_prop_astresexist}
Every $\ast$-Volterra kernel $J\in\cJ_\bF(S,T;\bR^{d\times d})$ has a unique $\ast$-resolvent $Q\in\cJ_\bF(S,T;\bR^{d\times d})$.
\end{prop}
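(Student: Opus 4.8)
The plan is to construct the $\ast$-resolvent $Q$ explicitly as the Neumann-type series
\begin{equation*}
	Q:=\sum^\infty_{m=1}J^{\ast m},
\end{equation*}
where $J^{\ast m}$ denotes the $m$-fold $\ast$-product of $J$ with itself (i.e.\ $J^{\ast 1}=J$ and $J^{\ast(m+1)}=J\ast J^{\ast m}=J^{\ast m}\ast J$, which is unambiguous by the associativity established in \cref{ast_prop_Banachalg}). First I would check convergence: by the submultiplicativity \eqref{ast_eq_JJ} we have $\knorm J^{\ast m}\knorm_{\cJ_\bF(S,T)}\leq\knorm J\knorm^m_{\cJ_\bF(S,T)}$, so if $\knorm J\knorm_{\cJ_\bF(S,T)}<1$ the series converges absolutely in the Banach space $\cJ_\bF(S,T;\bR^{d\times d})$. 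In general $\knorm J\knorm_{\cJ_\bF(S,T)}$ need not be $<1$, so the main point is to get a better estimate on $\knorm J^{\ast m}\knorm_{\cJ_\bF(S,T)}$; this is the expected main obstacle.

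To overcome it, I would exploit the genuine Volterra (convolution) structure hidden in the $\ast$-product rather than treating $\ast$ as an abstract Banach-algebra multiplication. Concretely, writing $J=\sum_{n\ge0}\bfW_n[\bfF_n[J]]$ and expanding $J^{\ast m}$ via \cref{ast_lemm_chaos}, each chaos component $\bfF_N[J^{\ast m}]$ is a sum over compositions $N=n_1+\cdots+n_m$ of iterated $\ast$-products of the deterministic kernels $\bfF_{n_i}[J]$, each of which is an honest iterated Lebesgue convolution on a simplex. The key analytic input is a ``loss of a factorial'' estimate: because the innermost integration in a $\ast$-product runs over a simplex whose length shrinks, one obtains a bound of the form
\begin{equation*}
	\|\bfF_{n_1}[J]\ast\cdots\ast\bfF_{n_m}[J]\|_{L^2(\Delta_{N+2}(S,T)),\op}\leq\frac{C^m}{\sqrt{(m-1)!}}\prod^m_{i=1}\bigl(1+\|\bfF_{n_i}[J]\|_{L^2(\Delta_{n_i+2}(S,T)),\op}\bigr),
\end{equation*}
or some variant thereof, after carefully tracking the simplex volumes. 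Summing over compositions and over $N$ then yields $\knorm J^{\ast m}\knorm_{\cJ_\bF(S,T)}\leq C^m\knorm J\knorm^{\,?}_{\cJ_\bF(S,T)}/\sqrt{(m-1)!}$ (up to reindexing), whose sum over $m$ is finite regardless of the size of $\knorm J\knorm_{\cJ_\bF(S,T)}$; this is exactly the classical mechanism by which deterministic Volterra kernels always admit resolvents (cf.\ \cite{GrLoSt90}). If this refined estimate turns out to be delicate to implement directly at the level of the full norm, an alternative is to first prove existence of a ``local'' $\ast$-resolvent $Q$ on a short subinterval $[S,S']$ where $\knorm J\knorm_{\cJ_\bF(S,S')}<1$, and then patch across finitely many subintervals using a Volterra-type gluing argument together with the uniqueness from \cref{res_prop_unique}.

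Once the series $Q=\sum_{m\ge1}J^{\ast m}$ is shown to converge in $\cJ_\bF(S,T;\bR^{d\times d})$, it remains to verify that it solves the resolvent equation. Using continuity of the $\ast$-product in each argument (which follows from \eqref{ast_eq_JJ}), one computes
\begin{equation*}
	J+J\ast Q=J+J\ast\sum^\infty_{m=1}J^{\ast m}=J+\sum^\infty_{m=1}J^{\ast(m+1)}=\sum^\infty_{m=1}J^{\ast m}=Q,
\end{equation*}
and likewise $J+Q\ast J=Q$, so $Q$ is a $\ast$-resolvent of $J$. Uniqueness is immediate from \cref{res_prop_unique}(ii). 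I would present the convergence estimate as a separate lemma (or defer the sharpest version to a later section, as the excerpt's roadmap suggests Section~\ref{section_exist} handles the analogous delicate point for the $\star$-resolvent), keeping the proof of \cref{res_prop_astresexist} itself short: state the series, cite the convergence lemma, and verify the resolvent identity by the telescoping computation above.
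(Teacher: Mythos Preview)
Your proposal shares the paper's starting point---the Neumann series $Q=\sum_{m\ge1}J^{\ast m}$ converges and solves the resolvent equation whenever $\knorm J\knorm_{\cJ_\bF(S,T)}<1$---but diverges from it in how the general case is handled. The paper does \emph{not} attempt a refined factorial-type bound on $\knorm J^{\ast m}\knorm_{\cJ_\bF(S,T)}$, nor does it use a subdivision/gluing argument here. Instead it exploits the concrete convolution structure of the $\ast$-product via the classical exponential-weight trick: set $J_\sigma(t,s):=e^{-\sigma(t-s)}J(t,s)$, observe that $\bfF_n[J_\sigma](t_0,\dots,t_{n+1})=e^{-\sigma(t_0-t_{n+1})}\bfF_n[J](t_0,\dots,t_{n+1})$, and apply dominated convergence to get $\knorm J_\sigma\knorm_{\cJ_\bF(S,T)}\to0$ as $\sigma\to\infty$. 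For $\sigma$ large enough the series gives a resolvent $Q_\sigma$ of $J_\sigma$; then, because the weight factors multiplicatively through the integral $\int_s^t(\cdots)\,dr$, one checks directly that $Q(t,s):=e^{\sigma(t-s)}Q_\sigma(t,s)$ is the $\ast$-resolvent of $J$. This is a two-line reduction rather than a new estimate.

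Your main proposed route---the displayed inequality with a factor $C^m/\sqrt{(m-1)!}$---is not justified in your sketch and is not obviously true at the level of the $\cJ_\bF$-norm. For a single deterministic $L^2$ kernel one does not in general get factorial decay of $\|j^{\ast m}\|_{L^2,\op}$ without further structure; the standard references you cite (\cite{GrLoSt90}) themselves prove resolvent existence for $L^2$ Volterra kernels by weighting or subdivision, not by a direct factorial bound on iterates. Your alternative (localize to short subintervals and glue) would work in principle and is indeed the strategy the paper uses in \cref{section_exist} for the $\star$-resolvent, but it is considerably heavier than what is needed here: the exponential weight interacts so cleanly with the $\ast$-product (because $e^{-\sigma(t-s)}=e^{-\sigma(t-r)}e^{-\sigma(r-s)}$) that the whole argument fits in a paragraph. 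In short, your plan is not wrong, but it misses the much simpler device that the Volterra structure of $\ast$ makes available.
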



\begin{proof}
The uniqueness follows from \cref{res_prop_unique}. We show the existence. First, we assume that $\knorm J\knorm_{\cJ_\bF(S,T)}<1$. By \cref{ast_prop_Banachalg}, we have $\knorm J^{\ast n}\knorm_{\cJ_\bF(S,T)}\leq\knorm J\knorm_{\cJ_\bF(S,T)}^n$ for any $n\in\bN$, where $J^{\ast n}\in\cJ_\bF(S,T;\bR^{d\times d})$ denotes the $(n-1)$-fold $\ast$-product of $J$ by itself. By the assumption, $Q:=\sum^\infty_{n=1}J^{\ast n}$ converges in $\cJ_\bF(S,T;\bR^{d\times d})$. Observe that
\begin{equation*}
	J+J\ast Q=J+J\ast\Bigl(\sum^\infty_{n=1}J^{\ast n}\Bigr)=J+\sum^\infty_{n=1}J\ast J^{\ast n}=\sum^\infty_{n=1}J^{\ast n}=Q.
\end{equation*}
Similarly, we have $J+Q\ast J=Q$. Thus, $Q$ is the $\ast$-resolvent of $J$.

Now we consider the general case. For each $\sigma>0$, define $J_\sigma\in\cJ_\bF(S,T;\bR^{d\times d})$ by $J_\sigma(t,s):=e^{-\sigma(t-s)}J(t,s)$ for $(t,s)\in\Delta_2(S,T)$. It is easy to see that
\begin{equation*}
	\bfF_n[J_\sigma](t_0,t_1,\mathalpha{\dots},t_{n+1})=e^{-\sigma(t_0-t_{n+1})}\bfF_n[J](t_0,t_1,\mathalpha{\dots},t_{n+1})
\end{equation*}
for $(t_0,t_1,\mathalpha{\dots},t_{n+1})\in\Delta_{n+2}(S,T)$ and $n\in\bN_0$. Noting the definition of $\knorm\cdot\knorm_{\cJ_\bF(S,T)}$ (see \cref{ast_defi_astkernel}), by the dominated convergence theorem, we see that $\knorm J_\sigma\knorm_{\cJ_\bF(S,T)}$ tends to zero as $\sigma\to\infty$. Let $\sigma>0$ be such that $\knorm J_\sigma\knorm_{\cJ_\bF(S,T)}<1$. Then there exists a unique $\ast$-resolvent $Q_\sigma\in\cJ_\bF(S,T;\bR^{d\times d})$ of $J_\sigma$. For $(t,s)\in\Delta_2(S,T)$, we have
\begin{align*}
	(J_\sigma\ast Q_\sigma)(t,s)&=\int^t_s\!J_\sigma(t,r)Q_\sigma(r,s)\rd r=\int^t_s\!e^{-\sigma(t-r)}J(t,r)Q_\sigma(r,s)\rd r\\
	&=e^{-\sigma(t-s)}\int^t_s\!J(t,r)e^{\sigma(r-s)}Q_\sigma(r,s)\rd r
\end{align*}
and
\begin{align*}
	(Q_\sigma\ast J_\sigma)(t,s)&=\int^t_s\!Q_\sigma(t,r)J_\sigma(r,s)\rd r=\int^t_s\!Q_\sigma(t,r)e^{-\sigma(r-s)}J(r,s)\rd r\\
	&=e^{-\sigma(t-s)}\int^t_s\!e^{\sigma(t-r)}Q_\sigma(t,r)J(r,s)\rd r.
\end{align*}
Therefore, the $\ast$-Volterra kernel $Q\in\cJ_\bF(S,T;\bR^{d\times d})$ defined by $Q(t,s):=e^{\sigma(t-s)}Q_\sigma(t,s)$ for $(t,s)\in\Delta_2(S,T)$ satisfies
\begin{equation*}
	Q(t,s)=J(t,s)+\int^t_sJ(t,r)Q(r,s)\rd r=J(t,s)+\int^t_sQ(t,r)J(r,s)\rd r
\end{equation*}
for $(t,s)\in\Delta_2(S,T)$, and thus it is the $\ast$-resolvent of $J$. This completes the proof.
\end{proof}

The case of the $\star$-resolvent is more difficult. We provide useful sufficient conditions for the existence of the $\star$-resolvent in \cref{section_exist}.

The next lemma provides an explicit expression of the Wiener--It\^{o} chaos expansion of the $\star$-resolvent.


\begin{lemm}\label{res_lemm_starresexplicit}
Suppose that a $\star$-Volterra kernel $K\in\cK_\bF(S,T;\bR^{d\times d})$ satisfies $\fF_0[K]=0$ (which is equivalent to $\bE[K]=0$). If $R\in\cK_\bF(0,T;\bR^{d\times d})$ is a $\star$-resolvent of $K$, then the Wiener--It\^{o} chaos expansion is given by $\fF_0[R]=0$ and
\begin{equation*}
	\fF_n[R]=\sum^n_{i=1}\sum_{\substack{1\leq m_1,m_2,\dots,m_i\leq n\\m_1+m_2+\cdots+m_i=n}}\fF_{m_1}[K]\triangleright\fF_{m_2}[K]\triangleright\cdots\triangleright\fF_{m_i}[K],\ n\in\bN.
\end{equation*}
\end{lemm}


\begin{proof}
By the definition, the Wiener--It\^{o} chaos expansion of $R$ satisfies the following equations:
\begin{equation*}
	\fF_n[R]=\fF_n[K]+\sum^n_{k=0}\fF_{n-k}[K]\triangleright\fF_k[R],\ n\in\bN_0.
\end{equation*}
Since $\fF_0[K]=0$, we have $\fF_0[R]=0$, $\fF_1[R]=\fF_1[K]$, and $\fF_n[R]=\fF_n[K]+\sum^{n-1}_{k=1}\fF_{n-k}[K]\triangleright\fF_k[R]$ for $n\geq2$. By the induction, we get the assertion.
\end{proof}

Next, we introduce the notion of the $(\ast,\star$)-resolvent.


\begin{defi}\label{res_defi_aststarres}
For each $\ast$-Volterra kernel $J\in\cJ_\bF(S,T;\bR^{d\times d})$ and $\star$-Volterra kernel $K\in\cK_\bF(S,T;\bR^{d\times d})$, we say that a pair $(Q,R)\in\cJ_\bF(S,T;\bR^{d\times d})\times\cK_\bF(S,T;\bR^{d\times d})$ is a \emph{$(\ast,\star)$-resolvent} of $(J,K)$ if it satisfies the following resolvent equations:
\begin{equation*}
	\begin{cases}
	Q=J+J\ast Q+K\star Q=J+Q\ast J+R\star J,\\
	R=K+J\ast R+K\star R=K+Q\ast K+R\star K.
	\end{cases}
\end{equation*}
\end{defi}

The following is concerned with the uniqueness of the $(\ast,\star)$-resolvent.


\begin{lemm}\label{ast_prop_unique2}
Let $(J,K)\in\cJ_\bF(S,T;\bR^{d\times d})\times\cK_\bF(S,T;\bR^{d\times d})$ be fixed. Suppose that $(Q_1,R_1),(Q_2,R_2)\in\cJ_\bF(S,T;\bR^{d\times d})\times\cK_\bF(S,T;\bR^{d\times d})$ satisfy
\begin{equation*}
	\begin{cases}
	Q_1=J+J\ast Q_1+K\star Q_1,\\
	R_1=K+J\ast R_1+K\star R_1,
	\end{cases}
	\text{and}\ 
	\begin{cases}
	Q_2=J+Q_2\ast J+R_2\star J,\\
	R_2=K+Q_2\ast K+R_2\star K.
	\end{cases}
\end{equation*}
Then $Q_1=Q_2$ and $R_1=R_2$. In particular, the pair $(J,K)$ has at most one $(\ast,\star)$-resolvent.
\end{lemm}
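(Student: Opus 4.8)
The plan is to mimic the proof of \cref{res_prop_unique}, which is the single-product analogue, and exploit the same ``cancellation'' trick in the algebraic setting provided by \cref{star_prop_Banachalg}, \cref{ast_prop_Banachalg}, and \cref{ast_prop_aststar}. First I would observe that all the products appearing in the resolvent equations are associative across the two symbols: by \cref{ast_prop_aststar} and the remark following it, expressions such as $Q_2\ast J\ast Q_1$, $R_2\star J\ast Q_1$, $Q_2\ast K\star Q_1$, $R_2\star K\star Q_1$ are unambiguous, so we may freely regroup. The goal is to show $Q_1=Q_2$; once this is established, the relation $R_i = Q_i\ast K + R_i\star K + K$ together with $R_1=K+J\ast R_1+K\star R_1$ (and a second cancellation argument) will give $R_1=R_2$.

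The key step is the following identity. Starting from $Q_1=J+J\ast Q_1+K\star Q_1$, apply $Q_2\ast(\cdot)$ from the left and $R_2\star(\cdot)$ from the left to the pieces as needed; more directly, compute
\begin{equation*}
	Q_2\ast Q_1 + R_2\star Q_1 = (Q_2\ast J + R_2\star J)\ast \text{(something)} + \dots
\end{equation*}
Let me instead follow the cleaner route. From the first system, $Q_1 - J = J\ast Q_1 + K\star Q_1$; from the second, $Q_2 - J = Q_2\ast J + R_2\star J$. Consider $Q_2\ast(Q_1-J) + R_2\star(Q_1-J)$. Using the first system this equals $Q_2\ast J\ast Q_1 + Q_2\ast K\star Q_1 + R_2\star J\ast Q_1 + R_2\star K\star Q_1 = (Q_2\ast J + R_2\star J)\ast Q_1 + (Q_2\ast K + R_2\star K)\star Q_1 = (Q_2-J)\ast Q_1 + (R_2-K)\star Q_1$. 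On the other hand, expanding directly, $Q_2\ast(Q_1-J) + R_2\star(Q_1-J) = Q_2\ast Q_1 - Q_2\ast J + R_2\star Q_1 - R_2\star J$. Equating the two and cancelling the common terms $Q_2\ast Q_1$ and $R_2\star Q_1$ leaves $-Q_2\ast J - R_2\star J = -J\ast Q_1 - K\star Q_1$, i.e.\ $Q_2\ast J + R_2\star J = J\ast Q_1 + K\star Q_1 = Q_1 - J$; combined with $Q_2 = J + Q_2\ast J + R_2\star J$ this gives $Q_2 = J + (Q_1-J) = Q_1$.

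Having $Q_1=Q_2=:Q$, I would repeat the analogous computation for the $R$-equations. From $R_1=K+J\ast R_1+K\star R_1$ and $R_2=K+Q\ast K+R_2\star K$, form $R_2\star(R_1-K) + $ (the appropriate $\ast$-term) and cancel; concretely, $Q\ast(R_1 - K) + R_2\star(R_1-K) = Q\ast J\ast R_1 + Q\ast K\star R_1 + R_2\star J\ast R_1 + R_2\star K\star R_1$, which after regrouping via the two systems and cancelling the common terms yields $Q\ast K + R_2\star K = J\ast R_1 + K\star R_1 = R_1 - K$, hence $R_2 = K + (R_1-K) = R_1$. I expect the main obstacle to be purely bookkeeping: making sure every mixed triple product is legitimately associative (invoking \cref{ast_prop_aststar} and the remark after it, plus \cref{star_prop_Banachalg} and \cref{ast_prop_Banachalg} for the same-symbol associativities) and that the ``cancellation'' is carried out in the correct Banach space so that subtraction is valid — there is no genuine analytic difficulty, only the need to track which resolvent equation (left- or right-hand form) is used at each regrouping.
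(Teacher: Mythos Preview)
Your argument is correct and is essentially the paper's own proof, just packaged slightly more compactly: the paper computes $Q_2\ast(Q_1-J)$ and $R_2\star(Q_1-J)$ separately (obtaining two intermediate identities) and then adds them to reach $Q_2\ast J+R_2\star J=J\ast Q_1+K\star Q_1$, whereas you form the sum $Q_2\ast(Q_1-J)+R_2\star(Q_1-J)$ from the outset and arrive at the same identity in one step; the $R$-part is handled analogously (the paper does it independently of the $Q$-conclusion, you use $Q_1=Q_2$ first, but either way works). The associativity and bilinearity you invoke are exactly those supplied by \cref{star_prop_Banachalg}, \cref{ast_prop_Banachalg}, and \cref{ast_prop_aststar}, and all cancellations take place in $\cJ_\bF(S,T;\bR^{d\times d})$ (for the $Q$-step) and $\cK_\bF(S,T;\bR^{d\times d})$ (for the $R$-step), so there is no hidden issue.
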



\begin{proof}
By \cref{ast_prop_Banachalg} and \cref{ast_prop_aststar}, we have
\begin{equation*}
	Q_2\ast(Q_1-J)=Q_2\ast(J\ast Q_1+K\star Q_1)=Q_2\ast J\ast Q_1+Q_2\ast K\star Q_1=(Q_2-J-R_2\star J)\ast Q_1+Q_2\ast K\star Q_1.
\end{equation*}
Cancelling out $Q_2\ast Q_1$, we get
\begin{equation}\label{ast_eq_unique1}
	Q_2\ast J+Q_2\ast K\star Q_1=J\ast Q_1+R_2\star J\ast Q_1.
\end{equation}
Similarly,
\begin{equation*}
	Q_2\ast(R_1-K)=Q_2\ast(J\ast R_1+K\star R_1)=Q_2\ast J\ast R_1+Q_2\ast K\star R_1=(Q_2-J-R_2\star J)\ast R_1+Q_2\ast K\star R_1.
\end{equation*}
Cancelling out $Q_2\ast R_1$, we get
\begin{equation}\label{ast_eq_unique2}
	Q_2\ast K+Q_2\ast K\star R_1=J\ast R_1+R_2\star J\ast R_1.
\end{equation}
Also,
\begin{equation*}
	R_2\star(Q_1-J)=R_2\star(J\ast Q_1+K\star Q_1)=R_2\star J\ast Q_1+R_2\star K\star Q_1=R_2\star J\ast Q_1+(R_2-K-Q_2\ast K)\star Q_1.
\end{equation*}
Cancelling out $R_2\star Q_1$, we get
\begin{equation}\label{ast_eq_unique3}
	R_2\star J+R_2\star J\ast Q_1=K\star Q_1+Q_2\ast K\star Q_1.
\end{equation}
Lastly,
\begin{equation*}
	R_2\star(R_1-K)=R_2\star(J\ast R_1+K\star R_1)=R_2\star J\ast R_1+R_2\star K\star R_1=R_2\star J\ast R_1+(R_2-K-Q_2\ast K)\star R_1.
\end{equation*}
Cancelling out $R_2\star R_1$, we get
\begin{equation}\label{ast_eq_unique4}
	R_2\star K+R_2\star J\ast R_1=K\star R_1+Q_2\ast K\star R_1.
\end{equation}
By \eqref{ast_eq_unique1} and \eqref{ast_eq_unique3}, we see that
\begin{equation*}
	Q_2\ast J+R_2\star J=J\ast Q_1+K\star Q_1.
\end{equation*}
Thus, we obtain
\begin{equation*}
	Q_2=J+Q_2\ast J+R_2\star J=J+J\ast Q_1+K\star Q_1=Q_1.
\end{equation*}
Furthermore, by \eqref{ast_eq_unique2} and \eqref{ast_eq_unique4}, we see that
\begin{equation*}
	Q_2\ast K+R_2\star K=J\ast R_1+K\star R_1.
\end{equation*}
Thus, we obtain
\begin{equation*}
	R_2=K+Q_2\ast K+R_2\star K=K+J\ast R_1+K\star R_1=R_1.
\end{equation*}
This completes the proof.
\end{proof}

We note that the $(\ast,\star)$-resolvent is defined as a pair of kernels. The next proposition shows that the $(\ast,\star)$-resolvent is constructed by the $\ast$-resolvent and the $\star$-resolvent.


\begin{prop}\label{ast_prop_aststarres}
Let a $\ast$-Volterra kernel $J\in\cJ_\bF(S,T;\bR^{d\times d})$ and a $\star$-Volterra kernel $K\in\cK_\bF(S,T;\bR^{d\times d})$ be given.
\begin{itemize}
\item[(i)]
Let $Q_1\in\cJ_\bF(S,T;\bR^{d\times d})$ be the $\ast$-resolvent of $J$, and suppose that $K+Q_1\ast K\in\cK_\bF(S,T;\bR^{d\times d})$ has a $\star$-resolvent $R_1\in\cK_\bF(S,T;\bR^{d\times d})$. Then $(Q,R)\in\cJ_\bF(S,T;\bR^{d\times d})\times\cK_\bF(S,T;\bR^{d\times d})$ defined by $Q:=Q_1+R_1\star Q_1$ and $R:=R_1$ is the $(\ast,\star)$-resolvent of $(J,K)$.
\item[(ii)]
Suppose that $K\in\cK_\bF(S,T;\bR^{d\times d})$ has a $\star$-resolvent $R_2\in\cK_\bF(S,T;\bR^{d\times d})$, and let $Q_2\in\cJ_\bF(S,T;\bR^{d\times d})$ be the $\ast$-resolvent of $J+R_2\star J\in\cJ_\bF(S,T;\bR^{d\times d})$. Then $(Q,R)\in\cJ_\bF(S,T;\bR^{d\times d})\times\cK_\bF(S,T;\bR^{d\times d})$ defined by $Q:=Q_2$ and $R:=R_2+Q_2\ast R_2$ is the $(\ast,\star)$-resolvent of $(J,K)$.
\end{itemize}
\end{prop}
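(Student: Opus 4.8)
The plan is to verify directly that the two pairs $(Q,R)$ proposed in (i) and (ii) satisfy the four resolvent equations of \cref{res_defi_aststarres}. The guiding picture is operator-theoretic: a $(\ast,\star)$-resolvent $(Q,R)$ of $(J,K)$ is the kernel pair for which $\mathrm{id}+Q\ast(\cdot)+R\star(\cdot)$ inverts $\mathrm{id}-J\ast(\cdot)-K\star(\cdot)$, and the two constructions correspond to the factorizations
\[
\mathrm{id}-J\ast(\cdot)-K\star(\cdot)=\bigl(\mathrm{id}-J\ast(\cdot)\bigr)\circ\bigl(\mathrm{id}-(K+Q_1\ast K)\star(\cdot)\bigr)=\bigl(\mathrm{id}-K\star(\cdot)\bigr)\circ\bigl(\mathrm{id}-(J+R_2\star J)\ast(\cdot)\bigr),
\]
whose inverses are composed in the reverse order. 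I will not formalize this operator calculus; instead I will carry out the corresponding identities inside the Banach algebras $\cJ_\bF(S,T;\bR^{d\times d})$ and $\cK_\bF(S,T;\bR^{d\times d})$, relying only on associativity and distributivity from \cref{star_prop_Banachalg} and \cref{ast_prop_Banachalg}, the cross-product relations $K\star(J\ast\xi)=(K\star J)\ast\xi$ and $J\ast(K\star\xi)=(J\ast K)\star\xi$ from \cref{ast_prop_aststar} (and their two-parameter analogues, which follow from the same chaos-expansion computations), and the membership facts $Q_1\ast K,\,Q_2\ast R_2\in\cK_\bF(S,T;\bR^{d\times d})$ and $R_1\star Q_1,\,R_2\star J\in\cJ_\bF(S,T;\bR^{d\times d})$ from \eqref{ast_eq_JK} and \cref{ast_prop_aststar}, which in particular show that the proposed pairs lie in the required spaces.

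For part (i), with $(Q,R)=(Q_1+R_1\star Q_1,\,R_1)$, I would dispose of the four equations in a convenient order. The two ``right-hand'' equations are easy: substituting the definitions and using $(R_1\star Q_1)\ast K=R_1\star(Q_1\ast K)$, $(R_1\star Q_1)\ast J=R_1\star(Q_1\ast J)$ and $J+Q_1\ast J=Q_1$ reduces $R=K+Q\ast K+R\star K$ to the resolvent equation $R_1=(K+Q_1\ast K)+R_1\star(K+Q_1\ast K)$, and $Q=J+Q\ast J+R\star J$ to the tautology $Q=Q_1+R_1\star Q_1$. The only step that is not purely mechanical is $R=K+J\ast R+K\star R$: here I would first rewrite the resolvent equation $R_1=(K+Q_1\ast K)+(K+Q_1\ast K)\star R_1$ as $R_1=g+Q_1\ast g$ with $g:=K+K\star R_1$ (using $(Q_1\ast K)\star R_1=Q_1\ast(K\star R_1)$), and then use that $Q_1$ is the $\ast$-resolvent of $J$: since $Q_1\ast g=J\ast g+J\ast(Q_1\ast g)=J\ast(g+Q_1\ast g)=J\ast R_1$, we get $R_1=g+J\ast R_1=K+K\star R_1+J\ast R_1$. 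Finally, $Q=J+J\ast Q+K\star Q$ follows by expanding the right-hand side with $Q=Q_1+R_1\star Q_1$, collecting the common right factor $\star Q_1$, and invoking the equation for $R_1$ just established together with $J+J\ast Q_1=Q_1$.

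Part (ii) is the mirror image: with $(Q,R)=(Q_2,\,R_2+Q_2\ast R_2)$, where $Q_2$ exists automatically by \cref{res_prop_astresexist}, the same chain of substitutions works with the roles of $\ast$ and $\star$ interchanged and $K$ (rather than $J$) factored out first. The two ``right-hand'' equations again reduce to the resolvent equations for $Q_2$ and $R_2$; the equation $Q=J+J\ast Q+K\star Q$ is the non-mechanical one, obtained by writing $Q_2=h+R_2\star h$ with $h:=J+J\ast Q_2$ (using $(R_2\star J)\ast Q_2=R_2\star(J\ast Q_2)$) and then using that $R_2$ is the $\star$-resolvent of $K$, so that $K\star Q_2=K\star h+(K\star R_2)\star h=(K+K\star R_2)\star h=R_2\star h$ and hence $Q_2=h+K\star Q_2$; the last equation $R=K+J\ast R+K\star R$ then follows as in part (i). I do not expect any conceptual obstacle: the main work is bookkeeping — selecting the right cross-product identity from \cref{ast_prop_aststar} at each point and checking that all intermediate terms stay in $\cJ_\bF(S,T;\bR^{d\times d})$ or $\cK_\bF(S,T;\bR^{d\times d})$ — while uniqueness of the constructed pair is already provided by \cref{ast_prop_unique2}.
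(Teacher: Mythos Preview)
Your proposal is correct and follows essentially the same approach as the paper's proof: both verify the four resolvent equations directly using the associativity and distributivity properties of the $\ast$- and $\star$-products together with the mixed identities from \cref{ast_prop_aststar}. The only differences are cosmetic: you treat the ``right-hand'' equations first and phrase the key step for $R=K+J\ast R+K\star R$ as the factorization $R_1=g+Q_1\ast g$ with $g=K+K\star R_1$, whereas the paper starts from the ``left-hand'' equations and computes $J\ast R_1$ by expanding $J\ast$ applied to the resolvent equation for $R_1$; unwinding either computation yields the same identity $J\ast R_1=Q_1\ast K+Q_1\ast(K\star R_1)$. Your remark that the two-parameter analogues of the cross-product identities (e.g.\ $(R_1\star Q_1)\ast J=R_1\star(Q_1\ast J)$ for $J\in\cJ_\bF$) follow from the same chaos-expansion arguments is well taken and is also used implicitly in the paper.
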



\begin{proof}
\begin{itemize}
\item[(i)]
Note that $Q_1=J+J\ast Q_1$ and $R_1=K+Q_1\ast K+(K+Q_1\ast K)\star R_1$. By \cref{star_prop_Banachalg}, \cref{ast_prop_Banachalg} and \cref{ast_prop_aststar}, we have
\begin{align*}
	J\ast R=J\ast R_1&=J\ast K+J\ast Q_1\ast K+J\ast K\star R_1+J\ast Q_1\ast K\star R_1\\
	&=(J+J\ast Q_1)\ast K+(J+J\ast Q_1)\ast(K\star R_1)\\
	&=Q_1\ast K+Q_1\ast K\star R_1,
\end{align*}
and hence
\begin{align*}
	K+J\ast R+K\star R&=K+Q_1\ast K+Q_1\ast K\star R_1+K\star R_1\\
	&=K+Q_1\ast K+(K+Q_1\ast K)\star R_1\\
	&=R_1=R.
\end{align*}
Consequently, we get
\begin{equation}\label{ast_eq_aststarres1}
	R=K+J\ast R+K\star R.
\end{equation}
Furthermore, we have
\begin{align*}
	J+J\ast Q+K\star Q&=J+J\ast(Q_1+R_1\star Q_1)+K\star(Q_1+R_1\star Q_1)\\
	&=J+J\ast Q_1+(K+J\ast R_1+K\star R_1)\star Q_1\\
	&=Q_1+R_1\star Q_1=Q,
\end{align*}
and thus
\begin{equation}\label{ast_eq_aststarres2}
	Q=J+J\ast Q+K\star Q.
\end{equation}
Similarly, from the equalities $Q_1=J+Q_1\ast J$ and $R_1=K+Q_1\ast K+R_1\star(K+Q_1\ast K)$, we have
\begin{align*}
	K+Q\ast K+R\star K&=K+(Q_1+R_1\star Q_1)\ast K+R_1\star K\\
	&=K+Q_1\ast K+R_1\star(K+Q_1\ast K)\\
	&=R_1=R,
\end{align*}
and thus
\begin{equation}\label{ast_eq_aststarres3}
	R=K+Q\ast K+R\star K.
\end{equation}
Furthermore, we have
\begin{align*}
	J+Q\ast J+R\star J&=J+(Q_1+R_1\star Q_1)\ast J+R_1\star J\\
	&=J+Q_1\ast J+R_1\star(J+Q_1\ast J)\\
	&=Q_1+R_1\star Q_1=Q,
\end{align*}
and thus
\begin{equation}\label{ast_eq_aststarres4}
	Q=J+Q\ast J+R\star J.
\end{equation}
By \eqref{ast_eq_aststarres1}, \eqref{ast_eq_aststarres2}, \eqref{ast_eq_aststarres3} and \eqref{ast_eq_aststarres4}, we see that $(Q,R)$ is the $(\ast,\star)$-resolvent of $(J,K)$.
\item[(ii)]
The claim (ii) can be proved by the same way as (i) by inverting the roles of $J$ and $K$, $Q$ and $R$, and the $\ast$-product and the $\star$-product.
\end{itemize}
\end{proof}


\begin{rem}
From \cref{ast_prop_aststarres} (ii) and \cref{res_prop_astresexist}, if $K\in\cK_\bF(S,T;\bR^{d\times d})$ has a $\star$-resolvent, then $(J,K)$ has a $(\ast,\star)$-resolvent for any $J\in\cJ_\bF(S,T;\bR^{d\times d})$. The existence of the $\star$-resolvent will be discussed in \cref{section_exist}.
\end{rem}


\subsection{A variation of constants formula for generalized SVIEs}\label{subsection_fVCF}

Now we are ready to show the variation of constants formula for generalized SVIEs.


\begin{theo}\label{fVCF_theo_VCF}
Let a $\ast$-Volterra kernel $J\in\cJ_\bF(S,T;\bR^{d\times d})$ and a $\star$-Volterra kernel $K\in\cK_\bF(S,T;\bR^{d\times d})$ be fixed, and suppose that $(J,K)$ has a $(\ast,\star)$-resolvent $(Q,R)\in\cJ_\bF(S,T;\bR^{d\times d})\times\cK_\bF(S,T;\bR^{d\times d})$. Then for any free term $\varphi\in L^2_\bF(S,T;\bR^d)$, the generalized SVIE
\begin{equation*}
	X=\varphi+J\ast X+K\star X
\end{equation*}
has a unique solution $X\in L^2_\bF(S,T;\bR^d)$. This solution is given by the variation of constants formula:
\begin{equation*}
	X=\varphi+Q\ast\varphi+R\star\varphi.
\end{equation*}
\end{theo}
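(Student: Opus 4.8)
The plan is to establish uniqueness and existence separately, in each case reducing the claim to the algebraic identities proved above, namely associativity and distributivity of the $\ast$- and $\star$-products and of the module actions (\cref{star_prop_Banachalg}, \cref{ast_prop_Banachalg}) together with the two mixed associativity rules $K\star(J\ast\xi)=(K\star J)\ast\xi$ and $J\ast(K\star\xi)=(J\ast K)\star\xi$ from \eqref{ast_eq_KJxi}--\eqref{ast_eq_JKxi}. Since all the products involved are bounded bilinear operations on the relevant Banach spaces and modules -- so that $J\ast Q+K\star Q\in\cJ_\bF(S,T;\bR^{d\times d})$ and $J\ast R+K\star R\in\cK_\bF(S,T;\bR^{d\times d})$ by \eqref{ast_eq_JJ}, \eqref{ast_eq_JK} and \cref{ast_prop_aststar} -- no convergence issue arises and the manipulations below are legitimate finite rearrangements.

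For uniqueness, I would take two solutions $X_1,X_2\in L^2_\bF(S,T;\bR^d)$ and put $Y:=X_1-X_2$, so that $Y=J\ast Y+K\star Y$. Using the \emph{right} resolvent equations $Q=J+Q\ast J+R\star J$ and $R=K+Q\ast K+R\star K$, I would expand
\[
	Q\ast Y+R\star Y=Q\ast(J\ast Y+K\star Y)+R\star(J\ast Y+K\star Y)
\]
with the help of associativity and of \eqref{ast_eq_KJxi}--\eqref{ast_eq_JKxi}, and regroup the four resulting terms as $(Q\ast J+R\star J)\ast Y+(Q\ast K+R\star K)\star Y=(Q-J)\ast Y+(R-K)\star Y$. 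Since the last expression equals $Q\ast Y+R\star Y-(J\ast Y+K\star Y)=Q\ast Y+R\star Y-Y$, cancelling $Q\ast Y+R\star Y$ from both sides yields $Y=0$, hence $X_1=X_2$.

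For existence, I would set $X:=\varphi+Q\ast\varphi+R\star\varphi$, which lies in $L^2_\bF(S,T;\bR^d)$ by \eqref{ast_eq_Jxi} and the boundedness of the $\star$-product, and verify directly that it solves the equation. Expanding $J\ast X+K\star X$ and using associativity together with \eqref{ast_eq_KJxi}--\eqref{ast_eq_JKxi} to rewrite $J\ast(Q\ast\varphi)$ and $K\star(Q\ast\varphi)$ as terms of the form $(\cdot)\ast\varphi$ and $J\ast(R\star\varphi)$ and $K\star(R\star\varphi)$ as terms of the form $(\cdot)\star\varphi$, one gets $J\ast X+K\star X=J\ast\varphi+K\star\varphi+(J\ast Q+K\star Q)\ast\varphi+(J\ast R+K\star R)\star\varphi$. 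Substituting the \emph{left} resolvent equations $J\ast Q+K\star Q=Q-J$ and $J\ast R+K\star R=R-K$, the right-hand side collapses to $J\ast\varphi+K\star\varphi+(Q-J)\ast\varphi+(R-K)\star\varphi=Q\ast\varphi+R\star\varphi=X-\varphi$, i.e.\ $X=\varphi+J\ast X+K\star X$. Combined with the uniqueness step, this proves the theorem.

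I do not expect a genuine obstacle here; the whole argument is a careful but routine piece of bookkeeping. The one point deserving attention is keeping track of which side of the resolvent equations is used -- the left equations $Q=J+J\ast Q+K\star Q$, $R=K+J\ast R+K\star R$ for existence, the right ones for uniqueness -- and splitting each mixed product of the form $J\ast(Q\ast\varphi)$, $K\star(Q\ast\varphi)$, $J\ast(R\star\varphi)$, $K\star(R\star\varphi)$ into a pure $\ast$- or $\star$-action on $\varphi$, which is exactly what \eqref{ast_eq_KJxi} and \eqref{ast_eq_JKxi} are for.
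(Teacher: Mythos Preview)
Your proof is correct and follows essentially the same approach as the paper: both verify existence by showing $X:=\varphi+Q\ast\varphi+R\star\varphi$ solves the equation via the left resolvent equations, and both handle uniqueness via the right resolvent equations and the same mixed associativity rules. The only cosmetic difference is that for uniqueness the paper shows directly that any solution $X$ satisfies $Q\ast\varphi+R\star\varphi=J\ast X+K\star X$ (hence $X=\varphi+Q\ast\varphi+R\star\varphi$), whereas you run the same computation on the difference $Y=X_1-X_2$; the algebra is identical.
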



\begin{proof}
Define $X\in L^2_\bF(S,T;\bR^d)$ by $X=\varphi+Q\ast\varphi+R\star\varphi$. By \cref{star_prop_Banachalg}, \cref{ast_prop_Banachalg} and \cref{ast_prop_aststar}, we have
\begin{equation*}
	J\ast X=J\ast(\varphi+Q\ast\varphi+R\star\varphi)=(J+J\ast Q)\ast\varphi+(J\ast R)\star\varphi
\end{equation*}
and
\begin{equation*}
	K\star X=K\star(\varphi+Q\ast\varphi+R\star\varphi)=(K\star Q)\ast\varphi+(K+K\star R)\star\varphi.
\end{equation*}
Combining the above two equalities and using the resolvent equations, we get
\begin{equation*}
	J\ast X+K\star X=(J+J\ast Q+K\star Q)\ast\varphi+(K+J\ast R+K\star R)\star\varphi=Q\ast\varphi+R\star\varphi.
\end{equation*}
This implies that $X=\varphi+J\ast X+K\star X$.

Conversely, if $X\in L^2_\bF(S,T;\bR^d)$ satisfies $X=\varphi+J\ast X+K\star X$, then we have
\begin{equation*}
	Q\ast\varphi=Q\ast(X-J\ast X-K\star X)=(Q-Q\ast J)\ast X-(Q\ast K)\star X
\end{equation*}
and
\begin{equation*}
	R\star\varphi=R\star(X-J\ast X-K\star X)=-(R\star J)\ast X+(R-R\star K)\star X.
\end{equation*}
Combining the above two equalities and using the resolvent equations, we get
\begin{equation*}
	Q\ast\varphi+R\star\varphi=(Q-Q\ast J-R\star J)\ast X+(R-Q\ast K-R\star K)\star X=J\ast X+K\star X.
\end{equation*}
This implies that $X=\varphi+Q\ast\varphi+R\star\varphi$. This completes the proof.
\end{proof}

Noting the stochastic integral representation of the $\star$-product (see \cref{star_prop_integral}), we immediately get the following corollary.


\begin{cor}\label{fVCF_cor_VCF1}
Let $J\in \cJ_\bF(S,T;\bR^{d\times d})$ and $K=\sum^\infty_{n=1}\fW_n[k_n]\in\cK_\bF(S,T;\bR^{d\times d})$ with $k_n\in\cV_{n+1}(S,T;\bR^{d\times d})$, $n\in\bN$, be fixed, and suppose that $(J,K)$ has a $(\ast,\star)$-resolvent $(Q,R)\in\cJ_\bF(S,T;\bR^{d\times d})\times\cK_\bF(S,T;\bR^{d\times d})$. Then for any free term $\varphi\in L^2_\bF(S,T;\bR^d)$, the generalized SVIE
\begin{equation*}
	X(t)=\varphi(t)+\int^t_SJ(t,s)X(s)\rd s+\sum^\infty_{n=1}\int^t_S\int^{t_1}_S\mathalpha{\cdots}\int^{t_{n-1}}_Sk_n(t,t_1,\mathalpha{\dots},t_n)X(t_n)\rd W(t_n)\mathalpha{\cdots}\rd W(t_1),\ t\in(S,T),
\end{equation*}
has a unique solution $X\in L^2_\bF(S,T;\bR^d)$. This solution is given by the variation of constants formula:
\begin{equation*}
	X(t)=\varphi(t)+\int^t_SQ(t,s)\varphi(s)\rd s+\sum^\infty_{n=1}\int^t_S\int^{t_1}_S\mathalpha{\cdots}\int^{t_{n-1}}_Sr_n(t,t_1,\mathalpha{\dots},t_n)\varphi(t_n)\rd W(t_n)\mathalpha{\cdots}\rd W(t_1),\ t\in(S,T),
\end{equation*}
where $r_n:=\fF_n[R]\in\cV_{n+1}(S,T;\bR^{d\times d})$ for $n\in\bN$. Here, the infinite sum in the right-hand side converges in $L^2_\bF(S,T;\bR^d)$.
\end{cor}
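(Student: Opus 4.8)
The plan is to observe that, once we identify $K=\sum^\infty_{n=1}\fW_n[k_n]$, the displayed generalized SVIE is literally the abstract equation $X=\varphi+J\ast X+K\star X$ treated in \cref{fVCF_theo_VCF}, so the corollary is obtained simply by unwinding the definitions of the $\ast$- and $\star$-products. First I would note that $\fF_0[K]=0$ and $\fF_n[K]=k_n$ for every $n\in\bN$; then by \cref{ast_defi_astprod} (the case with no free time-parameters) the Lebesgue term $\int^t_SJ(t,s)X(s)\rd s$ equals $(J\ast X)(t)$, and by \cref{star_prop_integral} the infinite sum of iterated stochastic integrals equals $(K\star X)(t)$, its $\fF_0[K](t)X(t)$ summand vanishing. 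Hence \cref{fVCF_theo_VCF} applies directly and yields unique solvability in $L^2_\bF(S,T;\bR^d)$ together with the abstract formula $X=\varphi+Q\ast\varphi+R\star\varphi$.

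It then remains to rewrite the two products on the right-hand side concretely. By \cref{ast_defi_astprod} again, $(Q\ast\varphi)(t)=\int^t_SQ(t,s)\varphi(s)\rd s$. For the $\star$-term I would apply \cref{star_prop_integral} once more to obtain $(R\star\varphi)(t)=\fF_0[R](t)\varphi(t)+\sum^\infty_{n=1}\int^t_S\!\cdots\!\int^{t_{n-1}}_S\fF_n[R](t,t_1,\dots,t_n)\varphi(t_n)\rd W(t_n)\cdots\rd W(t_1)$, with the series converging in $L^2_\bF(S,T;\bR^d)$; here $R\in\cK_\bF(S,T;\bR^{d\times d})$ guarantees, via \cref{star_defi_starkernel} and \cref{detkernel_lemm_kernel3}, that each $r_n:=\fF_n[R]$ lies in $\cV_{n+1}(S,T;\bR^{d\times d})$ and that $\sum_n\knorm r_n\knorm_{\cV_{n+1}(S,T)}<\infty$. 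Substituting these expressions into $X=\varphi+Q\ast\varphi+R\star\varphi$ produces the stated variation of constants formula, provided the zeroth-chaos term $\fF_0[R](t)\varphi(t)$ is absent.

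The one point that is not pure bookkeeping, and the step I would treat with care, is therefore the vanishing $\fF_0[R]=0$. I would take the zeroth Wiener--It\^o chaos component of the resolvent equation $R=K+J\ast R+K\star R$: since $\fF_0[K]=0$, \cref{star_defi_starprod} gives $\fF_0[K\star R]=\fF_0[K]\triangleright\fF_0[R]=0$, while \cref{ast_lemm_chaos} gives $\fF_0[J\ast R]=\bfF_0[J]\ast\fF_0[R]$, so that $\fF_0[R]=\bfF_0[J]\ast\fF_0[R]$. This is a homogeneous linear deterministic Volterra equation for $\fF_0[R]\in\cV_1(S,T;\bR^{d\times d})$ with (deterministic) kernel $\bfF_0[J]\in\cJ_\bF(S,T;\bR^{d\times d})$; composing on the left with the $\ast$-resolvent of $\bfF_0[J]$, which exists by \cref{res_prop_astresexist}, and cancelling forces $\fF_0[R]=0$. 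With this in hand the formula follows and the proof is complete; everything else is a direct transcription of \cref{fVCF_theo_VCF}, \cref{ast_defi_astprod} and \cref{star_prop_integral}.
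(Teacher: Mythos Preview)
Your proposal is correct and follows exactly the route the paper indicates: the paper merely states that the corollary follows ``immediately'' from \cref{fVCF_theo_VCF} together with the integral representation \cref{star_prop_integral}, and you have faithfully unpacked that sentence. Your extra paragraph verifying $\fF_0[R]=0$ (by reading off the zeroth chaos of the resolvent identity $R=K+J\ast R+K\star R$ and solving the resulting homogeneous deterministic Volterra equation via \cref{res_prop_astresexist}) is a point the paper leaves implicit; it is needed for the displayed formula as written, and your argument for it is sound.
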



\begin{cor}\label{fVCF_cor_VCF2}
Let deterministic kernels $j\in L^2(\Delta_2(S,T);\bR^{d\times d})$ and $k\in\cV_2(S,T;\bR^{d\times d})$ be fixed. Let $q\in L^2(\Delta_2(S,T);\bR^{d\times d})$ be the $\ast$-resolvent of $j$, and assume that $\fW_1[k+q\ast k]$ has a $\star$-resolvent. Then for any free term $\varphi\in\ L^2_\bF(S,T;\bR^d)$, the SVIE
\begin{equation*}
	X(t)=\varphi(t)+\int^t_Sj(t,s)X(s)\rd s+\int^t_Sk(t,s)X(s)\rd W(s),\ t\in(S,T),
\end{equation*}
has a unique solution $X\in L^2_\bF(S,T;\bR^d)$. This solution is given by the variation of constants formula:
\begin{align*}
	X(t)&=\varphi(t)+\int^t_Sq(t,s)\varphi(s)\rd s\\
	&\hspace{0.3cm}+\sum^\infty_{n=1}\int^t_S\int^{t_1}_S\mathalpha{\cdots}\int^{t_{n-1}}_S(k+q\ast k)^{\triangleright n}(t,t_1,\mathalpha{\dots},t_n)\Bigl\{\varphi(t_n)+\int^{t_n}_Sq(t_n,s)\varphi(s)\rd s\Bigr\}\rd W(t_n)\mathalpha{\cdots}\rd W(t_1),\\
	&\hspace{7cm}t\in(S,T).
\end{align*}
Here, $(k+q\ast k)^{\triangleright n}\in\cV_{n+1}(S,T;\bR^{d\times d})$ denotes the $(n-1)$-fold $\triangleright$-product of $k+q\ast k\in\cV_2(S,T;\bR^{d\times d})$ by itself, and the infinite sum in the right-hand side converges in $L^2_\bF(S,T;\bR^d)$.
\end{cor}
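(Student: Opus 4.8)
The plan is to deduce the statement from Theorem~\ref{fVCF_theo_VCF}; once the right kernels are identified, everything reduces to reading off the abstract variation of constants formula and unwinding it. First I would observe that, since $|A|_\op\le|A|$ for $A\in\bR^{d\times d}$, the deterministic kernel $j\in L^2(\Delta_2(S,T);\bR^{d\times d})$ belongs to $\cJ_\bF(S,T;\bR^{d\times d})$ (here $\bfF_0[j]=j$ and $\bfF_n[j]=0$ for $n\ge1$, so $\knorm j\knorm_{\cJ_\bF(S,T)}=\|j\|_{L^2(\Delta_2(S,T)),\op}<\infty$), and likewise $q\in\cJ_\bF(S,T;\bR^{d\times d})$; moreover $K:=\fW_1[k]\in\cK_\bF(S,T;\bR^{d\times d})$ with $\fF_1[K]=k$ and $\fF_n[K]=0$ otherwise. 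By Definition~\ref{ast_defi_astprod} (case $n=0$) and Proposition~\ref{star_prop_integral}, the SVIE in the statement is exactly the algebraic equation $X=\varphi+J\ast X+K\star X$ with $J=j$ and this $K$.

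Next I would pin down the resolvents. By Proposition~\ref{res_prop_astresexist}, $J=j$ has a unique $\ast$-resolvent in $\cJ_\bF(S,T;\bR^{d\times d})$, which is deterministic (by the Remark following Proposition~\ref{res_prop_astresexist}) and hence, by uniqueness of the deterministic $\ast$-resolvent, coincides with the given $q$; write $Q_1:=q$. A direct computation with the chaos formula of Lemma~\ref{ast_lemm_chaos} (only the $n=0$ component of $Q_1$ and the $n=1$ component of $K$ survive) yields $Q_1\ast K=\fW_1[q\ast k]$, where $q\ast k\in\cV_2(S,T;\bR^{d\times d})$ by the bound $\knorm j\ast k\knorm_{\cV_{n+m+1}(S,T)}\le\|j\|_{L^2(\Delta_{n+2}(S,T)),\op}\knorm k\knorm_{\cV_{m+1}(S,T)}$ established inside the proof of Proposition~\ref{ast_prop_Banachalg} (with $n=0$, $m=1$). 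Hence $K+Q_1\ast K=\fW_1[k+q\ast k]$, which by hypothesis has a $\star$-resolvent $R_1\in\cK_\bF(S,T;\bR^{d\times d})$; since this is a single-chaos Gaussian kernel $\fW_1[\,\cdot\,]$ with $\cdot=k+q\ast k\in\cV_2(S,T;\bR^{d\times d})$, Lemma~\ref{res_lemm_starresexplicit} gives the explicit form $R_1=\sum_{n=1}^\infty\fW_n[(k+q\ast k)^{\triangleright n}]$, i.e.\ $\fF_0[R_1]=0$ and $\fF_n[R_1]=(k+q\ast k)^{\triangleright n}$ for $n\in\bN$.

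Then Proposition~\ref{ast_prop_aststarres}(i) shows that $(Q,R):=(Q_1+R_1\star Q_1,\,R_1)$ is the $(\ast,\star)$-resolvent of $(J,K)$, so Theorem~\ref{fVCF_theo_VCF} applies and gives a unique solution $X\in L^2_\bF(S,T;\bR^d)$ of the SVIE, namely $X=\varphi+Q\ast\varphi+R\star\varphi$. It remains to unwind this. By \eqref{ast_eq_KJxi} of Proposition~\ref{ast_prop_aststar}, $(R_1\star Q_1)\ast\varphi=R_1\star(Q_1\ast\varphi)$, hence $Q\ast\varphi+R\star\varphi=Q_1\ast\varphi+R_1\star(Q_1\ast\varphi)+R_1\star\varphi=(Q_1\ast\varphi)+R_1\star(\varphi+Q_1\ast\varphi)$, and therefore $X=(\varphi+Q_1\ast\varphi)+R_1\star(\varphi+Q_1\ast\varphi)$. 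Finally I would compute $(Q_1\ast\varphi)(t)=\int_S^tq(t,s)\varphi(s)\rd s$ from Definition~\ref{ast_defi_astprod}, and apply Proposition~\ref{star_prop_integral} to $R_1\star(\varphi+Q_1\ast\varphi)$ using $\fF_0[R_1]=0$ and $\fF_n[R_1]=(k+q\ast k)^{\triangleright n}$, which produces precisely the displayed iterated-integral expression; the convergence of the infinite sum in $L^2_\bF(S,T;\bR^d)$ is part of Proposition~\ref{star_prop_integral}.

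There is no real obstacle here, since all of the analytic content is already packaged in Theorem~\ref{fVCF_theo_VCF}, Lemma~\ref{res_lemm_starresexplicit}, and Proposition~\ref{ast_prop_aststarres}. The only steps requiring a little care are (a) checking that the deterministic $q$ of the statement is indeed the $\ast$-resolvent furnished by the abstract existence theorem, which follows from uniqueness together with the remark that a deterministic $\ast$-Volterra kernel has a deterministic $\ast$-resolvent, and (b) the bookkeeping in the last step, where one must match the order of the iterated It\^o integrals coming from $R_1\star(\,\cdot\,)$ with the nested structure of the $\triangleright$-product $(k+q\ast k)^{\triangleright n}$.
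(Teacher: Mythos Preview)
Your proposal is correct and follows essentially the same route as the paper: identify $Q_1=q$ as the $\ast$-resolvent of $j$, compute $K+Q_1\ast K=\fW_1[k+q\ast k]$, invoke Lemma~\ref{res_lemm_starresexplicit} for the explicit $\star$-resolvent $R_1=\sum_{n\ge1}\fW_n[(k+q\ast k)^{\triangleright n}]$, apply Proposition~\ref{ast_prop_aststarres}(i) and Theorem~\ref{fVCF_theo_VCF}, and then use \eqref{ast_eq_KJxi} together with Proposition~\ref{star_prop_integral} to rewrite $X=\varphi+q\ast\varphi+R_1\star(\varphi+q\ast\varphi)$ in the displayed integral form. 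The extra care you take in (a) and (b) is fine but not strictly needed beyond what the paper already records.
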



\begin{proof}
Note that $\fW_1[k]+q\ast\fW_1[k]=\fW_1[k+q\ast k]$. Thus, by \cref{res_lemm_starresexplicit}, the $\star$-resolvent of $\fW_1[k]+q\ast\fW_1[k]$ is given by $R=\sum^\infty_{n=1}\fW_n[(k+q\ast k)^{\triangleright n}]\in\cK_\bF(S,T;\bR^{d\times d})$. Defining $Q\in\cJ_\bF(S,T;\bR^{d\times d})$ by $Q:=q+R\star q$, by \cref{ast_prop_aststarres}, we see that $(Q,R)$ is the $(\ast,\star)$-resolvent of $(j,\fW_1[k])$. Thus, by \cref{fVCF_theo_VCF}, the SVIE $X=\varphi+j\ast X+\fW_1[k]\star X$ has a unique solution $X\in L^2_\bF(S,T;\bR^d)$, and the solution is given by
\begin{align*}
	X&=\varphi+Q\ast\varphi+R\star\varphi\\
	&=\varphi+(q+R\star q)\ast\varphi+R\star\varphi\\
	&=\varphi+q\ast\varphi+R\star(\varphi+q\ast\varphi).
\end{align*}
Noting \cref{star_prop_integral}, we get the assertion.
\end{proof}


\begin{exam}\label{fVCF_exam_FBS}
Consider the following one-dimensional linear fractional SDE of order $\alpha\in(\frac{1}{2},1]$:
\begin{equation}\label{fVCF_eq_FBS}
	\begin{cases}\displaystyle
	\CD X(t)=\mu X(t)+\sigma X(t)\frac{\mathrm{d}W(t)}{\mathrm{d}t},\ t\in(0,T),\\
	X(0)=x_0,
	\end{cases}
\end{equation}
where $\mu,\sigma,x_0\in\bR$ are constants. When $\alpha=1$, the above equation becomes the Black--Scholes SDE:
\begin{equation*}
	\begin{cases}
	\mathrm{d}X(t)=\mu X(t)\rd t+\sigma X(t)\rd W(t),\ t\in(0,T),\\
	X(0)=x_0,
	\end{cases}
\end{equation*}
and the solution is the geometric Brownian motion $X(t)=x_0\exp((\mu-\sigma^2/2)t+\sigma W(t))$. Thus, \eqref{fVCF_eq_FBS} can be seen as a time-fractional Black--Scholes SDE. Note that \eqref{fVCF_eq_FBS} is equivalent to the following SVIE:
\begin{equation*}
	X(t)=x_0+\int^t_0j(t,s)X(s)\rd s+\int^t_0k(t,s)X(s)\rd W(s),\ t\in(0,T),
\end{equation*}
where $j(t,s)=\frac{\mu}{\Gamma(\alpha)}(t-s)^{\alpha-1}$ and $k(t,s)=\frac{\sigma}{\Gamma(\alpha)}(t-s)^{\alpha-1}$. Here we use some fundamental calculus for the fractional kernel and the Mittag--Leffler function (see Section 1.2 in the textbook \cite{Po99}). The $\ast$-resolvent $q\in L^2(\Delta_2(0,T);\bR)$ of $j$ is given by $q(t,s)=\mu f(t-s)$, where $f(t)=t^{\alpha-1}E_{\alpha,\alpha}(\mu t^\alpha)$, and $E_{\beta,\gamma}(z):=\sum^\infty_{k=0}\frac{z^k}{\Gamma(\beta k+\gamma)}$ is the Mittag--Leffler function. Furthermore, it holds that $(k+q\ast k)(t,s)=\sigma f(t-s)$ for $(t,s)\in\Delta_2(0,T)$, and thus the $\star$-resolvent $R\in\cK_\bF(0,T;\bR)$ of $\fW_1[k+q\ast k]$ is given by
\begin{equation*}
	R(t)=\sum^\infty_{n=1}\sigma^n\int^t_0\int^{t_1}_0\mathalpha{\cdots}\int^{t_{n-1}}_0f(t-t_1)f(t_1-t_2)\mathalpha{\cdots} f(t_{n-1}-t_n)\rd W(t_n)\mathalpha{\cdots}\rd W(t_1),\ t\in(0,T).
\end{equation*}
Here, the existence of the $\star$-resolvent $R$ follows from the results in \cref{section_exist}. Observe that $1+\int^t_0q(t,s)\rd s=1+\mu\int^t_0f(t-s)\rd s=E_{\alpha,1}(\mu t^\alpha)=:E_\alpha(\mu t^\alpha)$. Consequently, by \cref{fVCF_cor_VCF2}, the solution $X\in L^2_\bF(S,T;\bR)$ to the fractional Black--Scholes SDE \eqref{fVCF_eq_FBS} is given by
\begin{equation*}
	X(t)=x_0\Bigl\{E_\alpha(\mu t^\alpha)+\sum^\infty_{n=1}\sigma^n\int^t_0\int^{t_1}_0\mathalpha{\cdots}\int^{t_{n-1}}_0f(t-t_1)f(t_1-t_2)\mathalpha{\cdots} f(t_{n-1}-t_n)E_\alpha(\mu t^\alpha_n)\rd W(t_n)\mathalpha{\cdots}\rd W(t_1)\Bigr\}
\end{equation*}
for $t\in(0,T)$. In particular, if $\mu=0$, we have
\begin{equation*}
	X(t)=x_0\Bigl\{1+\sum^\infty_{n=1}\Bigl(\frac{\sigma}{\Gamma(\alpha)}\Bigr)^n\int^t_0\int^{t_1}_0\mathalpha{\cdots}\int^{t_{n-1}}_0(t-t_1)^{\alpha-1}(t_1-t_2)^{\alpha-1}\mathalpha{\cdots}(t_{n-1}-t_n)^{\alpha-1}\rd W(t_n)\mathalpha{\cdots}\rd W(t_1)\Bigr\}
\end{equation*}
for $t\in(0,T)$.
\end{exam}


\section{Backward stochastic Volterra integral equations}\label{section_BSVIE}

In this section, we investigate linear Type-II BSVIEs. First of all, consider the following Type-II BSVIE:
\begin{equation}\label{BSVIE_eq_classical}
	Y(t)=\tilde{\psi}(t)+\int^T_t\{j(s,t)^\top Y(s)+k(s,t)^\top Z(s,t)\}\rd s-\int^T_tZ(t,s)\rd W(s),\ t\in(S,T),
\end{equation}
where $\tilde{\psi}(\cdot)\in L^2_\cF(S,T;\bR^d)$ is a given free term such that $\tilde{\psi}(t)$ is $\cF^S_T$-measurable for each $t\in(S,T)$, and $j\in L^2(\Delta_2(S,T);\bR^{d\times d})$ and $k\in\cV_2(S,T;\bR^{d\times d})$ are given deterministic kernels. We say that a pair $(Y(\cdot),Z(\cdot,\cdot))$ is an adapted M-solution of BSVIE \eqref{BSVIE_eq_classical} if $Y\in L^2_\bF(S,T;\bR^d)$, $Z:\Omega\times(S,T)^2\to\bR^d$ are measurable, $Z(t,\cdot)$ is $\bF^S$-adapted for a.e.\ $t\in(S,T)$, $\bE[\int^T_S\int^T_S|Z(t,s)|^2\rd t\rd s]<\infty$, and the equation, together with the relation $Y(t)=\bE_S[Y(t)]+\int^t_SZ(t,s)\rd W(s)$ hold for a.e.\ $t\in(S,T)$, a.s.\ (see \cite{Yo08}). Note that the term $Z(t,s)$ for $(t,s)\in\Delta_2(S,T)$ is determined by $Y$ via the martingale representation theorem. Thus, we can write $Z=\cM_1[Y]$ in $\Delta_2(S,T)$ by means of the martingale representation operator $\cM_1:L^2_\bF(S,T;\bR^d)\to L^2_\bF(\Delta_2(S,T);\bR^d)$, which will be defined below. Therefore, taking the conditional expectation $\bE_t$ in \eqref{BSVIE_eq_classical}, we see that the above equation is equivalent to the following equations:
\begin{equation*}
	\begin{cases}
	\displaystyle Y(t)=\bE_t[\tilde{\psi}(t)]+\bE_t\Bigl[\int^T_tj(s,t)^\top Y(s)\rd s\Bigr]+\int^T_tk(s,t)^\top\cM_1[Y](s,t)\rd s,\ t\in(S,T),\\
	\displaystyle \int^T_tZ(t,s)\rd W(s)=\Theta^Y(t)-\bE_t[\Theta^Y(t)],\ t\in(S,T),
	\end{cases}
\end{equation*}
where the process $\Theta^Y(t):=\tilde{\psi}(t)+\int^T_tj(s,t)^\top Y(s)\rd s$, $t\in(S,T)$, is in $L^2_\cF(S,T;\bR^d)$ and depends on $Y$. Note that the above two equations are decoupled in the sense that if $Y\in L^2_\bF(S,T;\bR^d)$ solves the first equation, then the term $Z(t,s)$ for $(t,s)\in(S,T)^2\setminus\Delta_2(S,T)$ is determined by $Y$ as the integrand of the martingale representation theorem for $\Theta^Y(t)$. Consequently, Type-II BSVIE \eqref{BSVIE_eq_classical} can be seen as an integral equation for $Y\in L^2_\bF(S,T;\bR^d)$ (not for the pair $(Y(\cdot),Z(\cdot,\cdot))$) including the martingale representation operator:
\begin{equation*}
	Y(t)=\psi(t)+\bE_t\Bigl[\int^T_tj(s,t)^\top Y(s)\rd s\Bigr]+\int^T_tk(s,t)^\top\cM_1[Y](s,t)\rd s,\ t\in(S,T),
\end{equation*}
where $\psi\in L^2_\bF(S,T;\bR^d)$ is defined by $\psi(t):=\bE_t[\tilde{\psi}(t)]$ for $t\in(S,T)$.


\begin{defi}\label{BSVIE_defi_martop}
Let $d_1,d_2\in\bN$. For each $n\in\bN_0$, we define the \emph{martingale representation operator} $\cM_n:L^2_\bF(S,T;\bR^{d_1\times d_2})\to L^2_\bF(\Delta_{n+1}(S,T);\bR^{d_1\times d_2})$ via the martingale representation theorem inductively as follows: for each $\xi\in L^2_\bF(S,T;\bR^{d_1\times d_2})$,
\begin{equation*}
	\cM_0[\xi](t_0):=\xi(t_0)
\end{equation*}
for $t_0\in(S,T)$, and
\begin{equation*}
	\cM_n[\xi](t_0,t_1,\mathalpha{\dots},t_n)=\bE_S[\cM_n[\xi](t_0,t_1,\mathalpha{\dots},t_n)]+\int^{t_n}_S\cM_{n+1}[\xi](t_0,t_1,\mathalpha{\dots},t_n,t_{n+1})\rd W(t_{n+1})
\end{equation*}
for $(t_0,t_1,\mathalpha{\dots},t_n)\in\Delta_{n+1}(S,T)$ and $n\in\bN$.
\end{defi}


\begin{rem}
\begin{itemize}
\item[(i)]
For each $\xi\in L^2_\bF(S,T;\bR^{d_1\times d_2})$ and $n\in\bN_0$, it holds that
\begin{equation*}
	\bE_S[\cM_n[\xi](t_0,t_1,\mathalpha{\dots},t_n)]=\bE[\cM_n[\xi](t_0,t_1,\mathalpha{\dots},t_n)]=\fF_n[\xi](t_0,t_1,\mathalpha{\dots},t_n),\ (t_0,t_1,\mathalpha{\dots},t_n)\in\Delta_{n+1}(S,T).
\end{equation*}
See \eqref{bstar_eq_n+k} below.
\item[(ii)]
Noting the isometry, we have $\|\cM_{n+1}[\xi]\|_{L^2_\bF(\Delta_{n+2}(S,T))}\leq\|\cM_n[\xi]\|_{L^2_\bF(\Delta_{n+1}(S,T))}$ for any $n\in\bN_0$ and $\xi\in L^2_\bF(S,T;\bR^{d_1\times d_2})$. By the induction, we see that $\cM_n:L^2_\bF(S,T;\bR^{d_1\times d_2})\to L^2_\bF(\Delta_{n+1}(S,T);\bR^{d_1\times d_2})$ is a bounded linear operator with the operator norm $\|\cM_n\|_\op\leq1$.
\end{itemize}
\end{rem}

We will consider the following generalized equation including (infinitely many) martingale representation operators:
\begin{equation}\label{BSVIE_eq_generalized}
	Y(t)=\psi(t)+\bE_t\Bigl[\int^T_tJ(s,t)^\top Y(s)\rd s\Bigr]+\sum^\infty_{n=1}\int_{\Delta_n(t,T)}k_n(s,t)^\top\cM_n[Y](s,t)\rd s,\ t\in(S,T),
\end{equation}
where $\psi\in L^2_\bF(S,T;\bR^d)$, $J\in\cJ_\bF(S,T;\bR^{d\times d})$ and $K:=\sum^\infty_{n=1}\fW_n[k_n]\in\cK_\bF(S,T;\bR^{d\times d})$. We call this equation a \emph{generalized BSVIE}. The solution we are looking for is the adapted process $Y\in L^2_\bF(S,T;\bR^d)$ satisfying \eqref{BSVIE_eq_generalized}. In order to investigate generalized BSVIE \eqref{BSVIE_eq_generalized}, we write it as an algebraic equation on $L^2_\bF(S,T;\bR^d)$ as follows:
\begin{equation}\label{BSVIE_eq_BSVIEalg}
	Y=\psi+J^\top\bast Y+K^\top\bstar Y,
\end{equation}
where $J^\top\bast Y$ corresponds to the term $\bE_t[\int^T_tJ(s,t)^\top Y(s)\rd s]$, and $K^\top\bstar Y$ corresponds to the infinite sum of the integrals including martingale representation operators. Then, we provide a variation of constants formula for \eqref{BSVIE_eq_BSVIEalg}. Furthermore, we show the duality principle between a generalized SVIE and a generalized BSVIE.


\subsection{Backward $\star$-product}\label{subsection_bstar}

In this subsection, we investigate the term $K^\top\bstar Y$ appearing in equation \eqref{BSVIE_eq_BSVIEalg}.


\begin{defi}\label{bstar_defi_bstarprod}
For each $\star$-Volterra kernel $K\in\cK_\bF(S,T;\bR^{d\times d})$ and $\xi\in L^2_\bF(S,T;\bR^{d\times d_1})$ with $d_1\in\bN$, we define the \emph{backward $\star$-product} $K\bstar\xi\in L^2_\bF(S,T;\bR^{d\times d_1})$ by the following Wiener--It\^{o} chaos expansion:
\begin{equation*}
	\fF_n[K\bstar\xi](t_0,t):=\sum^\infty_{k=n}\int_{\Delta_{k-n}(t_0,T)}\fF_{k-n}[K](s,t_0)\fF_k[\xi](s,t_0,t)\rd s
\end{equation*}
for $(t_0,t)=(t_0,t_1,\mathalpha{\dots},t_n)\in\Delta_{n+1}(S,T)$ and $n\in\bN_0$, where $\int_{\Delta_0(t_0,T)}\alpha\rd s:=\alpha$ for each vector $\alpha$.
\end{defi}

The following lemma ensures the well-definedness of the backward $\star$-product.


\begin{lemm}\label{bstar_lemm_bstarwelldef}
For each $\star$-Volterra kernel $K\in\cK_\bF(S,T;\bR^{d\times d})$ and $\xi\in L^2_\bF(S,T;\bR^{d\times d_1})$ with $d_1\in\bN$, the backward $\star$-product $K\bstar\xi\in L^2_\bF(S,T;\bR^{d\times d_1})$ is well-defined and satisfies
\begin{equation*}
	\|K\bstar\xi\|_{L^2_\bF(S,T)}\leq\knorm K\knorm_{\cK_\bF(S,T)}\|\xi\|_{L^2_\bF(S,T)}.
\end{equation*}
Furthermore, if $K_1,K_2\in\cK_\bF(S,T;\bR^{d\times d})$, then $K_1\bstar K_2$ is in $\cK_\bF(S,T;\bR^{d\times d})$ and satisfies
\begin{equation*}
	\knorm K_1\bstar K_2\knorm_{\cK_\bF(S,T)}\leq\knorm K_1\knorm_{\cK_\bF(S,T)}\knorm K_2\knorm_{\cK_\bF(S,T)}.
\end{equation*}
\end{lemm}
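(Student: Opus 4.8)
The plan is to control, for each $n\in\bN_0$, the layers of the series that defines $\fF_n[K\bstar\xi]$. Setting $k=n+m$ in \cref{bstar_defi_bstarprod}, put
\begin{equation*}
	g_{n,m}(t_0,t):=\int_{\Delta_m(t_0,T)}\fF_m[K](s,t_0)\,\fF_{n+m}[\xi](s,t_0,t)\rd s,\qquad (t_0,t)\in\Delta_{n+1}(S,T),\ m\in\bN_0
\end{equation*}
(with $g_{n,0}(t_0,t)=\fF_0[K](t_0)\fF_n[\xi](t_0,t)$), so that $\fF_n[K\bstar\xi]=\sum_{m=0}^\infty g_{n,m}$. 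The geometric fact used throughout is that $\{(s,t_0,t)\mid(t_0,t)\in\Delta_{n+1}(S,T),\ s\in\Delta_m(t_0,T)\}=\Delta_{n+m+1}(S,T)$ under the identification of coordinates $(s_1,\dots,s_m,t_0,t_1,\dots,t_n)$, since both describe the chain $T>s_1>\dots>s_m>t_0>t_1>\dots>t_n>S$. Note that in $g_{n,m}$ the integration variable $s$ runs over exactly the free variables of $\fF_m[K]\in\cV_{m+1}(S,T;\bR^{d\times d})$, with $t_0$ its Volterra variable; this is why the $\cV_{m+1}$-norm is the natural quantity to control the $s$-integral.

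First I would establish the layer estimate. By $|AB|\leq|A|_\op|B|$ and the Cauchy--Schwarz inequality in $s$,
\begin{equation*}
	|g_{n,m}(t_0,t)|\leq\knorm\fF_m[K]\knorm_{\cV_{m+1}(S,T)}\Bigl(\int_{\Delta_m(t_0,T)}|\fF_{n+m}[\xi](s,t_0,t)|^2\rd s\Bigr)^{1/2}\quad\text{for a.e.\ }(t_0,t);
\end{equation*}
squaring, integrating over $\Delta_{n+1}(S,T)$, and using the geometric fact to recognize that $\int_{\Delta_{n+1}(S,T)}\int_{\Delta_m(t_0,T)}|\fF_{n+m}[\xi](s,t_0,t)|^2\rd s\,\rd(t_0,t)=\|\fF_{n+m}[\xi]\|^2_{L^2(\Delta_{n+m+1}(S,T))}$ gives
\begin{equation*}
	\|g_{n,m}\|_{L^2(\Delta_{n+1}(S,T))}\leq\knorm\fF_m[K]\knorm_{\cV_{m+1}(S,T)}\,\|\fF_{n+m}[\xi]\|_{L^2(\Delta_{n+m+1}(S,T))}.
\end{equation*}
Writing $a_m:=\knorm\fF_m[K]\knorm_{\cV_{m+1}(S,T)}$ and $b_k:=\|\fF_k[\xi]\|_{L^2(\Delta_{k+1}(S,T))}$, one has $\sum_ma_m=\knorm K\knorm_{\cK_\bF(S,T)}<\infty$ and $(b_k)_k\in\ell^2$ with $\sum_kb_k^2=\|\xi\|^2_{L^2_\bF(S,T)}$, so $\sum_mg_{n,m}$ converges absolutely in $L^2(\Delta_{n+1}(S,T))$ (and a.e.), with $\|\fF_n[K\bstar\xi]\|_{L^2(\Delta_{n+1}(S,T))}\leq\sum_{m\geq0}a_mb_{n+m}$. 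By Cauchy--Schwarz in $m$, $\bigl(\sum_ma_mb_{n+m}\bigr)^2\leq\bigl(\sum_ma_m\bigr)\bigl(\sum_ma_mb_{n+m}^2\bigr)$; summing over $n$ and interchanging the sums then yields $\sum_{n\geq0}\|\fF_n[K\bstar\xi]\|^2_{L^2(\Delta_{n+1}(S,T))}\leq\bigl(\sum_ma_m\bigr)^2\sum_kb_k^2=\knorm K\knorm^2_{\cK_\bF(S,T)}\|\xi\|^2_{L^2_\bF(S,T)}$. By the Wiener--It\^{o} chaos expansion (\cref{chaos_prop_chaos}), $K\bstar\xi\in L^2_\bF(S,T;\bR^{d\times d_1})$ is then well-defined with the asserted bound.

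For the second claim, take $\xi=K_2\in\cK_\bF(S,T;\bR^{d\times d})$, so that $\fF_{n+m}[K_2]\in\cV_{n+m+1}(S,T;\bR^{d\times d})$, and rerun the layer estimate while freezing the Volterra variable $t_n$ of $g_{n,m}$: for a.e.\ $t_n$, integrating $|g_{n,m}(t_0,\dots,t_{n-1},t_n)|^2_\op$ over $(t_0,\dots,t_{n-1})\in\Delta_n(t_n,T)$ and using $|AB|_\op\leq|A|_\op|B|_\op$, Cauchy--Schwarz in $s$, the $\cV_{m+1}$-bound on $\fF_m[K_1]$, and (via the geometric fact) $\int_{\Delta_n(t_n,T)}\int_{\Delta_m(t_0,T)}|\fF_{n+m}[K_2](s,t_0,t)|^2_\op\rd s\,\rd(t_0,\dots,t_{n-1})=\int_{\Delta_{n+m}(t_n,T)}|\fF_{n+m}[K_2](\,\cdot\,,t_n)|^2_\op\leq\knorm\fF_{n+m}[K_2]\knorm^2_{\cV_{n+m+1}(S,T)}$, I obtain $\knorm g_{n,m}\knorm_{\cV_{n+1}(S,T)}\leq\knorm\fF_m[K_1]\knorm_{\cV_{m+1}(S,T)}\knorm\fF_{n+m}[K_2]\knorm_{\cV_{n+m+1}(S,T)}$. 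Since $\cV_{n+1}(S,T;\bR^{d\times d})$ is a Banach space (\cref{detkernel_lemm_kernel}) and $\sum_m\knorm g_{n,m}\knorm_{\cV_{n+1}(S,T)}<\infty$, the series $\sum_mg_{n,m}=\fF_n[K_1\bstar K_2]$ converges there, so $\fF_n[K_1\bstar K_2]\in\cV_{n+1}(S,T;\bR^{d\times d})$; summing over $n$, with $a_m:=\knorm\fF_m[K_1]\knorm_{\cV_{m+1}(S,T)}$ and $c_k:=\knorm\fF_k[K_2]\knorm_{\cV_{k+1}(S,T)}$, and interchanging the sums, $\knorm K_1\bstar K_2\knorm_{\cK_\bF(S,T)}\leq\sum_n\sum_ma_mc_{n+m}=\sum_ma_m\sum_nc_{n+m}\leq\bigl(\sum_ma_m\bigr)\bigl(\sum_kc_k\bigr)=\knorm K_1\knorm_{\cK_\bF(S,T)}\knorm K_2\knorm_{\cK_\bF(S,T)}$.

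The step I expect to demand the most care is the bookkeeping of the nested simplices — verifying the identification $\{(s,t_0,t)\mid s\in\Delta_m(t_0,T),\ (t_0,t)\in\Delta_{n+1}(S,T)\}=\Delta_{n+m+1}(S,T)$, and correctly locating $t_0$ (an interior coordinate in the variable list of $K_1\bstar K_2$, not an extreme one) as the Volterra variable of the $K$-factor, which is precisely what makes the $\cV$-norm rather than a bare $L^2$-norm the quantity that propagates. The accompanying discrete inequality $\sum_n\bigl(\sum_ma_mb_{n+m}\bigr)^2\leq\bigl(\sum_ma_m\bigr)^2\sum_kb_k^2$ for the shifted (rather than convolution-type) product of sequences is slightly different from the Young-type estimates used earlier in the paper, though routine.
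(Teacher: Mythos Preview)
Your proof is correct and follows essentially the same route as the paper: the layer estimate $\|g_{n,m}\|_{L^2(\Delta_{n+1}(S,T))}\leq a_m b_{n+m}$ via Cauchy--Schwarz in $s$ together with the $\cV_{m+1}$-norm of $\fF_m[K]$ is exactly the paper's estimate (the paper phrases the passage from the sum over $m$ to the $L^2$-norm as Minkowski's inequality, and the $s$-integral bound as H\"older's inequality), and your discrete inequality $\sum_n\bigl(\sum_m a_m b_{n+m}\bigr)^2\leq\bigl(\sum_m a_m\bigr)^2\sum_k b_k^2$ is what the paper invokes as Young's convolution inequality. The second assertion is treated the same way in both, with the operator norm replacing the Frobenius norm and the $\cV$-norm replacing the $L^2$-norm on the $\xi$-side.
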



\begin{proof}
By using Minkowski's inequality and H\"{o}lder's inequality, we have, for each $n\in\bN_0$,
\begin{align*}
	&\Bigl\{\int_{\Delta_{n+1}(S,T)}\Bigl(\sum^\infty_{k=n}\int_{\Delta_{k-n}(\max t,T)}|\fF_{k-n}[K](s,\max t)\fF_k[\xi](s,t)|\rd s\Bigr)^2\rd t\Bigr\}^{1/2}\\
	&\leq\sum^\infty_{k=n}\Bigl\{\int_{\Delta_{n+1}(S,T)}\Bigl(\int_{\Delta_{k-n}(\max t,T)}|\fF_{k-n}[K](s,\max t)\fF_k[\xi](s,t)|\rd s\Bigr)^2\rd t\Bigr\}^{1/2}\\
	&\leq\sum^\infty_{k=n}\Bigl\{\int_{\Delta_{n+1}(S,T)}\Bigl(\int_{\Delta_{k-n}(\max t,T)}|\fF_{k-n}[K](s,\max t)|_\op\,|\fF_k[\xi](s,t)|\rd s\Bigr)^2\rd t\Bigr\}^{1/2}\\
	&\leq\sum^\infty_{k=n}\Bigl\{\int_{\Delta_{n+1}(S,T)}\int_{\Delta_{k-n}(\max t,T)}|\fF_{k-n}[K](s,\max t)|^2_\op\rd s\int_{\Delta_{k-n}(\max t,T)}|\fF_k[\xi](s,t)|^2\rd s\rd t\Bigr\}^{1/2}\\
	&\leq\sum^\infty_{k=n}\knorm\fF_{k-n}[K]\knorm_{\cV_{k-n+1}(S,T)}\|\fF_k[\xi]\|_{L^2(\Delta_{k+1}(S,T))}.
\end{align*}
This implies that
\begin{equation*}
	\|\fF_n[K\bstar\xi]\|_{L^2(\Delta_{n+1}(S,T))}\leq\sum^\infty_{k=n}\knorm\fF_{k-n}[K]\knorm_{\cV_{k-n+1}(S,T)}\|\fF_k[\xi]\|_{L^2(\Delta_{k+1}(S,T))}.
\end{equation*}
Therefore, by Young's convolution inequality and the isometry,
\begin{align*}
	\sum^\infty_{n=0}\|\fF_n[K\bstar\xi]\|^2_{L^2(\Delta_{n+1}(S,T))}&\leq\sum^\infty_{n=0}\Bigl(\sum^\infty_{k=n}\knorm\fF_{k-n}[K]\knorm_{\cV_{k-n+1}(S,T)}\|\fF_k[\xi]\|_{L^2(\Delta_{k+1}(S,T))}\Bigr)^2\\
	&\leq\Bigl(\sum^\infty_{n=0}\knorm\fF_n[K]\knorm_{\cV_{n+1}(S,T)}\Bigr)^2\sum^\infty_{n=0}\|\fF_n[\xi]\|^2_{L^2(\Delta_{n+1}(S,T))}\\
	&=\knorm K\knorm^2_{\cK_\bF(S,T)}\|\xi\|^2_{L^2_\bF(S,T)}<\infty.
\end{align*}
This implies the first assertion. Similarly, we can show that
\begin{equation*}
	\knorm\fF_n[K_1\bstar K_2]\knorm_{\cV_{n+1}(S,T)}\leq\sum^\infty_{k=n}\knorm\fF_{k-n+1}[K_1]\knorm_{\cV_{k-n+1}(S,T)}\knorm\fF_k[K_2]\knorm_{\cV_{k+1}(S,T)}
\end{equation*}
for each $n\in\bN_0$, and thus
\begin{align*}
	\knorm K_1\bstar K_2 \knorm_{\cK_\bF(S,T)}&=\sum^\infty_{n=0}\knorm\fF_n[K_1\bstar K_2]\knorm_{\cV_{n+1}(S,T)}\\
	&\leq\sum^\infty_{n=0}\sum^\infty_{k=n}\knorm\fF_{k-n}[K_1]\knorm_{\cV_{k-n+1}(S,T)}\knorm\fF_k[K_2]\knorm_{\cV_{k+1}(S,T)}\\
	&\leq\sum^\infty_{n=0}\knorm\fF_n[K_1]\knorm_{\cV_{n+1}(S,T)}\sum^\infty_{n=0}\knorm\fF_n[K_2]\knorm_{\cV_{n+1}(S,T)}\\
	&=\knorm K_1\knorm_{\cK_\bF(S,T)}\knorm K_2\knorm_{\cK_\bF(S,T)}.
\end{align*}
Hence, the last assertion holds.
\end{proof}

The following proposition shows fundamental algebraic properties of the backward $\star$-product.


\begin{prop}\label{bstar_prop_bstaralg}
For each $K,K_1,K_2\in\cK_\bF(S,T;\bR^{d\times d})$, $\xi,\xi_1,\xi_2\in L^2_\bF(S,T;\bR^{d\times d_1})$ with $d_1\in\bN$, and $\alpha\in\bR$, the following hold:
\begin{gather*}
	I_d\bstar\xi=\xi,\\
	(K_1+K_2)\bstar\xi=K_1\bstar\xi+K_2\bstar\xi,\\
	K\bstar(\xi_1+\xi_2)=K\bstar\xi_1+K\bstar\xi_2,\\
	\alpha(K\bstar\xi)=(\alpha K)\bstar\xi=K\bstar(\alpha\xi),\\
	K_1\bstar(K_2\bstar\xi)=(K^\top_2\star K^\top_1)^\top\bstar\xi.
\end{gather*}
\end{prop}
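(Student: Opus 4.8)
The first four identities are immediate. For each fixed $n\in\bN_0$ the coefficient $\fF_n[K\bstar\xi]$ of \cref{bstar_defi_bstarprod} is manifestly additive and $\bR$-homogeneous separately in $K$ and in $\xi$, being a (convergent, by \cref{bstar_lemm_bstarwelldef}) series of the bilinear maps $(f,g)\mapsto\int_{\Delta_{k-n}(t_0,T)}f(s,t_0)g(s,t_0,t)\rd s$; since an element of $L^2_\bF(S,T;\bR^{d\times d_1})$ is determined by its chaos coefficients (\cref{chaos_prop_chaos}), the additivity and scalar identities follow. For $I_d\bstar\xi=\xi$, observe that $\fF_0[I_d]=I_d$ and $\fF_m[I_d]=0$ for $m\geq1$, so only the term $k=n$ survives in the defining series and $\fF_n[I_d\bstar\xi](t_0,t)=\fF_0[I_d](t_0)\fF_n[\xi](t_0,t)=\fF_n[\xi](t_0,t)$.

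The substance is the last identity, and the plan is to compare the Wiener--It\^{o} chaos coefficients of the two sides. Two elementary facts will be used repeatedly. First, transposition commutes with the chaos coefficients, $\fF_m[\eta^\top]=\fF_m[\eta]^\top$, because $A\mapsto A^\top$ is a bounded linear map commuting with the iterated stochastic integral operators $\fW_m$; in particular $\cK_\bF(S,T;\bR^{d\times d})$ is stable under transposition (as $|A^\top|_\op=|A|_\op$), so $K_1^\top,K_2^\top\in\cK_\bF(S,T;\bR^{d\times d})$ and, by \cref{star_defi_starprod},
\[
	\fF_q\bigl[(K_2^\top\star K_1^\top)^\top\bigr]=\Bigl(\sum_{m=0}^{q}\fF_{q-m}[K_2^\top]\triangleright\fF_m[K_1^\top]\Bigr)^\top,\quad q\in\bN_0.
\]
Second, for matrices $AB=(B^\top A^\top)^\top$; combining this with the definition of $\triangleright$, one gets for the appropriate slices of a decreasing tuple $(w,t_0)=(w_1,\dots,w_q,t_0)\in\Delta_{q+1}(S,T)$ and each $m\in\{0,\dots,q\}$
\[
	\fF_m[K_1](w_{q-m+1},\dots,w_q,t_0)\,\fF_{q-m}[K_2](w_1,\dots,w_{q-m},w_{q-m+1})=\bigl[\bigl(\fF_{q-m}[K_2^\top]\triangleright\fF_m[K_1^\top]\bigr)(w,t_0)\bigr]^\top,
\]
with the convention that the slot indexed $q-m+1$ is $t_0$ when $m=0$.

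Next I would expand $\fF_n[K_1\bstar(K_2\bstar\xi)]$ by applying \cref{bstar_defi_bstarprod} twice: the outer application introduces a block $r\in\Delta_m(t_0,T)$ of integration variables carrying $\fF_m[K_1]$, and the inner one --- evaluated on the tuple $(r,t_0,t)$ whose maximum is $\max r$ (or $t_0$ when $m=0$) --- introduces a further block $v\in\Delta_p(\max r,T)$ carrying $\fF_p[K_2]$. Using the absolute-convergence bounds from the proof of \cref{bstar_lemm_bstarwelldef} to interchange the two infinite sums with the integrals, and Fubini's theorem to merge the two nested simplex integrals into a single integral over $w=(v,r)\in\Delta_{m+p}(t_0,T)$, I would then reindex by the total degree $q:=m+p$ (so that $\fF_{n+q}[\xi]$ appears). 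The coefficient multiplying $\fF_{n+q}[\xi](w,t_0,t)$ in the resulting expression is exactly $\sum_{m=0}^{q}$ of the left-hand sides of the second display above, hence, by that display and the first one, equals $\fF_q[(K_2^\top\star K_1^\top)^\top](w,t_0)$. Comparing the result with \cref{bstar_defi_bstarprod} written for $(K_2^\top\star K_1^\top)^\top\bstar\xi$ (whose well-definedness is ensured by \cref{star_prop_Banachalg} and \cref{bstar_lemm_bstarwelldef}) gives the identity.

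The only real difficulty here is bookkeeping: correctly tracking the ordering of the merged integration variables and the degenerate cases $m=0$ or $p=0$ (handled by the convention $\int_{\Delta_0(t_0,T)}\alpha\,\rd s:=\alpha$), and --- the crucial point --- respecting the order of matrix multiplication, since it is precisely the reversal $AB=(B^\top A^\top)^\top$ that produces the transpose $(K_2^\top\star K_1^\top)^\top$ in place of a naive $K_1\star K_2$.
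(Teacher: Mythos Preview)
Your proposal is correct and follows essentially the same approach as the paper: both dismiss the first four identities as immediate from the definition, and for the last one both compute the chaos coefficients $\fF_n$ directly by applying \cref{bstar_defi_bstarprod} twice, swapping the two infinite sums, merging the nested simplex integrals, reindexing by the total degree, and recognizing the resulting sum over $m$ as the $\triangleright$-expansion of $\fF_{\ell-n}[(K_2^\top\star K_1^\top)^\top]$. Your additional remarks on why transposition commutes with $\fF_m$ and on the absolute-convergence justification for the interchanges make explicit what the paper leaves implicit, but the argument is the same.
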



\begin{proof}
We prove $K_1\bstar(K_2\bstar\xi)=(K^\top_2\star K^\top_1)^\top\bstar\xi$. The others are trivial from the definition. Note that $\fF_n[K]^\top=\fF_n[K^\top]$ for any $n\in\bN_0$ and $K\in\cK_\bF(S,T;\bR^{d\times d})$. For each $n\in\bN_0$ and $t\in\Delta_{n+1}(S,T)$, we have
\begin{align*}
	&\fF_n[K_1\bstar(K_2\bstar\xi)](t)\\
	&=\sum^\infty_{k=n}\int_{\Delta_{k-n}(\max t,T)}\fF_{k-n}[K_1](s,\max t)\fF_k[K_2\bstar\xi](s,t)\rd s\\
	&=\sum^\infty_{k=n}\int_{\Delta_{k-n}(\max t,T)}\fF_{k-n}[K_1](s,\max t)\Bigl\{\sum^\infty_{\ell=k}\int_{\Delta_{\ell-k}(\max s,T)}\fF_{\ell-k}[K_2](r,\max s)\fF_\ell[\xi](r,s,t)\rd r\Bigr\}\rd s\\
	&=\sum^\infty_{\ell=n}\sum^\ell_{k=n}\int_{\Delta_{k-n}(\max t,T)}\int_{\Delta_{\ell-k}(\max s,T)}\fF_{k-n}[K_1](s,\max t)\fF_{\ell-k}[K_2](r,\max s)\fF_\ell[\xi](r,s,t)\rd r\rd s\\
	&=\sum^\infty_{\ell=n}\sum^\ell_{k=n}\int_{\Delta_{\ell-n}(\max t,T)}(\fF_{\ell-k}[K_2]^\top\triangleright\fF_{k-n}[K_1]^\top)(s,\max t)^\top\fF_\ell[\xi](s,t)\rd s\\
	&=\sum^\infty_{\ell=n}\int_{\Delta_{\ell-n}(\max t,T)}\Bigl\{\sum^{\ell-n}_{k=0}(\fF_{\ell-n-k}[K^\top_2]\triangleright\fF_k[K^\top_1])(s,\max t)\Bigr\}^\top\fF_\ell[\xi](s,t)\rd s\\
	&=\sum^\infty_{\ell=n}\int_{\Delta_{\ell-n}(\max t,T)}\fF_{\ell-n}[(K^\top_2\star K^\top_1)^\top](s,\max t)\fF_\ell[\xi](s,t)\rd s\\
	&=\fF_n[(K^\top_2\star K^\top_1)^\top\bstar\xi](t).
\end{align*}
Hence, we have $K_1\bstar(K_2\bstar\xi)=(K^\top_2\star K^\top_1)^\top\bstar\xi$.
\end{proof}

Next, we prove a representation of the backward $\star$-product in terms of martingale representation operators.


\begin{prop}\label{bstar_prop_integral}
For each $\star$-Volterra kernel $K\in\cK_\bF(S,T;\bR^{d\times d})$ and $\xi\in L^2_\bF(S,T;\bR^{d\times d_1})$ with $d_1\in\bN$, it holds that
\begin{equation}\label{bstar_eq_integral}
	(K\bstar\xi)(t)=\fF_0[K](t)\xi(t)+\sum^\infty_{n=1}\int_{\Delta_n(t,T)}\fF_n[K](s,t)\cM_n[\xi](s,t)\rd s,\ t\in(S,T),
\end{equation}
where the infinite sum in the right-hand side converges in $L^2_\bF(S,T;\bR^{d\times d_1})$.
\end{prop}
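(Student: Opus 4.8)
The plan is to follow the strategy of \cref{star_prop_integral}: introduce the bounded linear operators that make up the right-hand side of \eqref{bstar_eq_integral}, identify how each acts on a single Wiener--It\^o chaos component of $\xi$, and then re-sum.

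\textbf{Step 1: the right-hand side is a well-defined element of $L^2_\bF$.} For $n\in\bN_0$ I would define the linear map $\Psi^K_n$ on $L^2_\bF(S,T;\bR^{d\times d_1})$ by $\Psi^K_0[\xi](t):=\fF_0[K](t)\xi(t)$ and, for $n\geq1$,
\begin{equation*}
	\Psi^K_n[\xi](t):=\int_{\Delta_n(t,T)}\fF_n[K](s,t)\cM_n[\xi](s,t)\rd s,\ t\in(S,T).
\end{equation*}
Since $\cM_n[\xi]\in L^2_\bF(\Delta_{n+1}(S,T);\bR^{d\times d_1})$ with $\|\cM_n[\xi]\|_{L^2_\bF(\Delta_{n+1}(S,T))}\leq\|\xi\|_{L^2_\bF(S,T)}$ (because $\|\cM_n\|_\op\leq1$), the Cauchy--Schwarz estimate already carried out in the proof of \cref{bstar_lemm_bstarwelldef} gives $\|\Psi^K_n[\xi]\|_{L^2_\bF(S,T)}\leq\knorm\fF_n[K]\knorm_{\cV_{n+1}(S,T)}\|\xi\|_{L^2_\bF(S,T)}$, so $\Psi^K_n$ is bounded with $\|\Psi^K_n\|_\op\leq\knorm\fF_n[K]\knorm_{\cV_{n+1}(S,T)}$. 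As $\sum^\infty_{n=0}\knorm\fF_n[K]\knorm_{\cV_{n+1}(S,T)}=\knorm K\knorm_{\cK_\bF(S,T)}<\infty$, the series $\sum^\infty_{n=0}\Psi^K_n[\xi]$ converges in $L^2_\bF(S,T;\bR^{d\times d_1})$; this is precisely the right-hand side of \eqref{bstar_eq_integral}.

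\textbf{Step 2: the martingale representation of an iterated integral.} I would prove by induction on $n$ that, for $f_k\in L^2(\Delta_{k+1}(S,T);\bR^{d\times d_1})$ and $0\leq n\leq k$,
\begin{equation*}
	\cM_n[\fW_k[f_k]](t_0,t_1,\mathalpha{\dots},t_n)=\int^{t_n}_S\int^{t_{n+1}}_S\mathalpha{\cdots}\int^{t_{k-1}}_Sf_k(t_0,t_1,\mathalpha{\dots},t_k)\rd W(t_k)\mathalpha{\cdots}\rd W(t_{n+1})
\end{equation*}
(for $n=k$ the right-hand side reads $f_k(t_0,\mathalpha{\dots},t_k)$), while $\cM_n[\fW_k[f_k]]=0$ for $n>k$. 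The base case $n=0$ is the definition of $\fW_k$; for the inductive step one applies the defining recursion of $\cM$ to the displayed process and uses that an iterated stochastic integral of positive order has vanishing $\bE_S$-mean, so that peeling off the outermost $\rd W(t_{n+1})$ is exactly its martingale representation with respect to $t_{n+1}$. For $n>k$ one notes that $\cM_k[\fW_k[f_k]]=f_k$ is deterministic, hence its representation has no martingale part.

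\textbf{Step 3: action on a single chaos.} By continuity of $\Psi^K_n$ and the chaos expansion $\xi=\sum^\infty_{m=0}\fW_m[\fF_m[\xi]]$ one has $\Psi^K_n[\xi]=\sum^\infty_{m=0}\Psi^K_n[\fW_m[\fF_m[\xi]]]$. Substituting the formula of Step 2 into the definition of $\Psi^K_n$ and applying the stochastic Fubini theorem to bring the (deterministic) $\rd s$-integral inside the iterated stochastic integral, I would obtain, for $0\leq n\leq k$,
\begin{equation*}
	\Psi^K_n[\fW_k[\fF_k[\xi]]]=\fW_{k-n}[h_{n,k}],\qquad h_{n,k}(u_0,\mathalpha{\dots},u_{k-n}):=\int_{\Delta_n(u_0,T)}\fF_n[K](s,u_0)\,\fF_k[\xi](s,u_0,u_1,\mathalpha{\dots},u_{k-n})\rd s,
\end{equation*}
and $\Psi^K_n[\fW_k[\fF_k[\xi]]]=0$ for $n>k$; the case $n=0$ reduces to $\Psi^K_0[\fW_k[\fF_k[\xi]]]=\fW_k[\fF_0[K]\triangleright\fF_k[\xi]]$, consistent with the convention $\int_{\Delta_0(u_0,T)}\alpha\rd s:=\alpha$. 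That $h_{n,k}\in L^2(\Delta_{k-n+1}(S,T);\bR^{d\times d_1})$ follows from the same estimate as in Step 1.

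\textbf{Step 4: resummation, and the main obstacle.} The double series $\sum_n\sum_k\fW_{k-n}[h_{n,k}]$ converges absolutely in $L^2_\bF$ by Step 1, so it may be reorganized; writing $m=k-n$,
\begin{equation*}
	\sum^\infty_{n=0}\Psi^K_n[\xi]=\sum^\infty_{n=0}\sum^\infty_{k=n}\fW_{k-n}[h_{n,k}]=\sum^\infty_{m=0}\fW_m\Bigl[\sum^\infty_{n=0}h_{n,n+m}\Bigr].
\end{equation*}
On the other hand, in \cref{bstar_defi_bstarprod} the substitution $n=k-m$ in the defining series of $\fF_m[K\bstar\xi]$ turns it, argument by argument, into $\sum^\infty_{n=0}h_{n,n+m}$, so $\sum^\infty_{n=0}\Psi^K_n[\xi]=\sum^\infty_{m=0}\fW_m[\fF_m[K\bstar\xi]]=K\bstar\xi$, which is \eqref{bstar_eq_integral}. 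The step I expect to be most delicate is the combination of Steps 2 and 3: one must keep track of which time variable is the upper limit of each Lebesgue and of each stochastic integral after iterating $\cM$, and check the hypotheses of the stochastic Fubini theorem --- mild here, since the kernel $\fF_n[K](s,u_0)\fF_k[\xi](s,u_0,\mathalpha{\dots})$ is deterministic, but the nested simplicial domains (the $s$-variables lying above $u_0$, the stochastic integration variables below it) need care. The reindexing in Step 4 is then routine.
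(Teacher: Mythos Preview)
Your approach is correct and very close to the paper's: both define the same operators $\Psi^K_n$, establish the same bound $\|\Psi^K_n\|_\op\leq\knorm\fF_n[K]\knorm_{\cV_{n+1}(S,T)}$, and identify the chaos coefficients of $\sum_n\Psi^K_n[\xi]$ with those of $K\bstar\xi$. The only organisational difference is in the key lemma. You compute $\cM_n[\fW_k[f_k]]$ explicitly for a single chaos element and then expand $\xi$ into chaos, whereas the paper proves the dual statement $\fF_n[\cM_k[\xi](s,\cdot)](t_0,t)=\fF_{n+k}[\xi](s,t_0,t)$ for arbitrary $\xi$ (by induction on $k$), which lets it read off $\fF_n[\Psi^K_k[\xi]]$ directly without decomposing $\xi$. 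The two identities carry the same content, but the paper's formulation avoids your double sum in Step~4 entirely: it just applies the projection $\fF_n$ to $\sum_k\Psi^K_k[\xi]$ and recognises the definition of $\fF_n[K\bstar\xi]$.

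One small point to tighten: the claim in Step~4 that the double series ``converges absolutely in $L^2_\bF$ by Step~1'' is not quite right. Step~1 yields $\sum_n\|\Psi^K_n[\xi]\|<\infty$, but the finer sum $\sum_{n,k}\|\fW_{k-n}[h_{n,k}]\|$ is controlled by $\bigl(\sum_n\knorm\fF_n[K]\knorm_{\cV_{n+1}}\bigr)\bigl(\sum_k\|\fF_k[\xi]\|_{L^2}\bigr)$, and the second factor need not be finite (only the $\ell^2$-sum is). The reindexing is still valid, but the correct justification is orthogonality: apply the continuous chaos projection $\fF_m$ to the $L^2_\bF$-convergent sum $\sum_n\Psi^K_n[\xi]$ and use that $\fF_m[\Psi^K_n[\xi]]=h_{n,n+m}$ (since $\Psi^K_n[\xi]=\sum_{k\geq n}\fW_{k-n}[h_{n,k}]$ is itself a chaos expansion). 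That is precisely what the paper's direct route accomplishes.
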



\begin{proof}
First, we show that, for any $k,n\in\bN_0$, and $(s_1,\mathalpha{\dots},s_k,t_0,t)=(s_1,\mathalpha{\dots},s_k,t_0,t_1,\mathalpha{\dots},t_n)\in\Delta_{n+k+1}(S,T)$,
\begin{equation}\label{bstar_eq_n+k}
	\fF_n[\cM_k[\xi](s_1,\mathalpha{\dots},s_k,\cdot)](t_0,t)=\fF_{n+k}[\xi](s_1,\mathalpha{\dots},s_k,t_0,t).
\end{equation}
Clearly, for $k=0$, \eqref{bstar_eq_n+k} holds for any $n\in\bN_0$. Assume that \eqref{bstar_eq_n+k} holds for any $n\in\bN_0$ for some $k\in\bN_0$. By the definition of the martingale representation operator $\cM_{k+1}$, we have, for $(s,s_k)=(s_0,\mathalpha{\dots},s_{k-1},s_k)\in\Delta_{k+1}(S,T)$,
\begin{align*}
	\cM_k[\xi](s,s_k)&=\bE_S[\cM_k[\xi](s,s_k)]+\int^{s_k}_S\cM_{k+1}[\xi](s,s_k,t)\rd W(t)\\
	&=\bE_S[\cM_k[\xi](s,s_k)]+\int^{s_k}_S\sum^\infty_{n=0}\fW_n[\cM_{k+1}[\xi](s,s_k,\cdot)](t)\rd W(t)\\
	&=\bE_S[\cM_k[\xi](s,s_k)]+\sum^\infty_{n=0}\int^{s_k}_S\fW_n[\cM_{k+1}[\xi](s,s_k,\cdot)](t)\rd W(t)\\
	&=\bE_S[\cM_k[\xi](s,s_k)]+\int^{s_k}_S\fF_0[\cM_{k+1}[\xi](s,s_k,\cdot)](t)\rd W(t)\\
	&\hspace{0.5cm}+\sum^\infty_{n=1}\int^{s_k}_S\int^t_S\int^{t_1}_S\mathalpha{\cdots}\int^{t_{n-1}}_S\fF_n[\cM_{k+1}[\xi](s,s_k,\cdot)](t,t_1,\mathalpha{\dots},t_n)\rd W(t_n)\mathalpha{\cdots}\rd W(t_1)\rd W(t).
\end{align*}
From this, together with the assumption of the induction, we have, for any $n\in\bN_0$ and $(s,s_k,t_0,t)=(s_0,\mathalpha{\dots},s_{k-1},s_k,t_0,t_1,\mathalpha{\dots},t_n)\in\Delta_{n+k+2}(S,T)$,
\begin{equation*}
	\fF_n[\cM_{k+1}[\xi](s,s_k,\cdot)](t_0,t)=\fF_{n+1}[\cM_k[\xi](s,\cdot)](s_k,t_0,t)=\fF_{n+k+1}[\xi](s,s_k,t_0,t).
\end{equation*}
By the induction, we see that \eqref{bstar_eq_n+k} holds for any $k,n\in\bN_0$.

We write the right-hand side of \eqref{bstar_eq_integral} by $\sum^\infty_{k=0}\Psi^K_k[\xi](t)$, where $\Psi^K_0[\xi](t):=\fF_0[K](t)\xi(t)$, $t\in(S,T)$, and
\begin{equation*}
	\Psi^K_k[\xi](t):=\int_{\Delta_k(t,T)}\fF_k[K](s,t)\cM_k[\xi](s,t)\rd s,\ t\in(S,T),
\end{equation*}
for $k\in\bN$. Noting that $\|\cM_k[\xi]\|_{L^2_\bF(\Delta_{k+1}(S,T))}\leq\|\xi\|_{L^2_\bF(S,T)}$, we see that
\begin{align*}
	\|\Psi^K_k[\xi]\|^2_{L^2_\bF(S,T)}&=\bE\Bigl[\int^T_S\Bigl|\int_{\Delta_k(t,T)}\fF_k[K](s,t)\cM_k[\xi](s,t)\rd s\Bigr|^2\rd t\Bigr]\\
	&\leq\bE\Bigl[\int^T_S\Bigl(\int_{\Delta_k(t,T)}|\fF_k[K](s,t)|^2_\op\rd s\Bigr)\Bigl(\int_{\Delta_k(t,T)}|\cM_k[\xi](s,t)|^2\rd s\Bigr)\rd t\Bigr]\\
	&\leq\knorm \fF_k[K]\knorm^2_{\cV_{k+1}(S,T)}\|\cM_k[\xi]\|^2_{L^2_\bF(\Delta_{k+1}(S,T))}\\
	&\leq\knorm \fF_k[K]\knorm^2_{\cV_{k+1}(S,T)}\|\xi\|^2_{L^2_\bF(S,T)}.
\end{align*}
This implies that, for each $k\in\bN_0$, $\Psi^K_k$ is a bounded linear operator on $L^2_\bF(S,T;\bR^{d\times d_1})$ with the operator norm $\|\Psi^K_k\|_\op\leq\knorm\fF_k[K]\knorm_{\cV_{k+1}(S,T)}$. Noting that $\sum^\infty_{k=0}\knorm\fF_k[K]\knorm_{\cV_{k+1}(S,T)}=\knorm K\knorm_{\cK_\bF(S,T)}<\infty$, the infinite sum $\sum^\infty_{k=0}\Psi^K_k[\xi]$ converges in $L^2_\bF(S,T;\bR^{d\times d_1})$. Furthermore, by the stochastic Fubini theorem and \eqref{bstar_eq_n+k}, we have
\begin{align*}
	\fF_n[\Psi^K_k[\xi]](t_0,t)&=\int_{\Delta_k(t_0,T)}\fF_k[K](s,t_0)\fF_n[\cM_k[\xi](s,\cdot)](t_0,t)\rd s\\
	&=\int_{\Delta_k(t_0,T)}\fF_k[K](s,t_0)\fF_{n+k}[\xi](s,t_0,t)\rd s
\end{align*}
for $(t_0,t)=(t_0,t_1,\mathalpha{\dots},t_n)\in\Delta_{n+1}(S,T)$ and $k,n\in\bN_0$. Thus, we get
\begin{align*}
	\fF_n\Bigl[\sum^\infty_{k=0}\Psi^K_k[\xi]\Bigr](t_0,t)&=\sum^\infty_{k=0}\fF_n[\Psi^K_k[\xi]](t_0,t)\\
	&=\sum^\infty_{k=0}\int_{\Delta_k(t_0,T)}\fF_k[K](s,t_0)\fF_{n+k}[\xi](s,t_0,t)\rd s\\
	&=\sum^\infty_{k=n}\int_{\Delta_{k-n}(t_0,T)}\fF_{k-n}[K](s,t_0)\fF_k[\xi](s,t_0,t)\rd s\\
	&=\fF_n[K\bstar\xi](t_0,t)
\end{align*}
for $(t_0,t)=(t_0,t_1,\mathalpha{\dots},t_n)\in\Delta_{n+1}(S,T)$ and $n\in\bN_0$. This implies that the equality $K\bstar\xi=\sum^\infty_{k=0}\Psi^K_k[\xi]$ holds in $L^2_\bF(S,T;\bR^d)$, and we complete the proof.
\end{proof}

The following is a \emph{duality principle} with respect to the $\star$-product and the backward $\star$-product.


\begin{prop}\label{bstar_prop_duality}
For each $\star$-Volterra kernel $K\in\cK_\bF(S,T;\bR^{d\times d})$ and $\varphi,\psi\in L^2_\bF(S,T;\bR^d)$, it holds that
\begin{equation*}
	\langle K\star\varphi,\psi\rangle_{L^2_\bF(S,T)}=\langle\varphi,K^\top\bstar\psi\rangle_{L^2_\bF(S,T)}.
\end{equation*}
Here, for each $\xi_1,\xi_2\in L^2_\bF(S,T;\bR^d)$, $\langle\xi_1,\xi_2\rangle_{L^2_\bF(S,T)}:=\bE[\int^T_S\langle\xi_1(t),\xi_2(t)\rangle\rd t]$ denotes the inner product in the Hilbert space $L^2_\bF(S,T;\bR^d)$.
\end{prop}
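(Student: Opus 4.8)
The plan is to work entirely with the Wiener--It\^{o} chaos expansions of both sides. I would first record the polarized form of the isometry in \cref{chaos_prop_chaos}: since the chaos spaces are mutually orthogonal and each $\fW_n$ is an isometry, for any $\xi,\eta\in L^2_\bF(S,T;\bR^d)$ one has
\[
	\langle\xi,\eta\rangle_{L^2_\bF(S,T)}=\sum_{n=0}^\infty\langle\fF_n[\xi],\fF_n[\eta]\rangle_{L^2(\Delta_{n+1}(S,T))},
\]
where $\langle f,g\rangle_{L^2(\Delta_{n+1}(S,T))}:=\int_{\Delta_{n+1}(S,T)}\langle f(u),g(u)\rangle\rd u$. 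Applying this with $\xi=K\star\varphi$ (which lies in $L^2_\bF(S,T;\bR^d)$ by \cref{star_prop_Banachalg}) and $\eta=\psi$, and inserting the chaos expansion $\fF_n[K\star\varphi]=\sum_{k=0}^n\fF_{n-k}[K]\triangleright\fF_k[\varphi]$ from \cref{star_defi_starprod}, gives
\[
	\langle K\star\varphi,\psi\rangle_{L^2_\bF(S,T)}=\sum_{n=0}^\infty\sum_{k=0}^n\int_{\Delta_{n+1}(S,T)}\bigl\langle(\fF_{n-k}[K]\triangleright\fF_k[\varphi])(u_0,\dots,u_n),\,\fF_n[\psi](u_0,\dots,u_n)\bigr\rangle\rd u.
\]

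The core of the argument is a Fubini rearrangement of each summand. Writing out the $\triangleright$-product (\cref{detkernel_defi_triangleright}) as $(\fF_{n-k}[K]\triangleright\fF_k[\varphi])(u_0,\dots,u_n)=\fF_{n-k}[K](u_0,\dots,u_{n-k})\fF_k[\varphi](u_{n-k},\dots,u_n)$ and using the pointwise identity $\langle Av,w\rangle=\langle v,A^\top w\rangle$ together with $\fF_{n-k}[K]^\top=\fF_{n-k}[K^\top]$, the $(n,k)$-term equals
\[
	\int_{\Delta_{n+1}(S,T)}\bigl\langle\fF_k[\varphi](u_{n-k},\dots,u_n),\,\fF_{n-k}[K^\top](u_0,\dots,u_{n-k})\,\fF_n[\psi](u_0,\dots,u_n)\bigr\rangle\rd u.
\]
Next I would split the simplex $\Delta_{n+1}(S,T)$ at the index $n-k$: integrate first over the ``$K$-variables'' $(u_0,\dots,u_{n-k-1})\in\Delta_{n-k}(u_{n-k},T)$ and then over the ``$\varphi$-variables'' $(t_0,t):=(u_{n-k},\dots,u_n)\in\Delta_{k+1}(S,T)$, with the degenerate cases $n-k=0$ and $k=0$ handled by the conventions of \cref{detkernel_defi_triangleright} and \cref{bstar_defi_bstarprod}. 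Re-indexing by $j:=n-k$ and summing over $j,k\ge0$ in place of $0\le k\le n$, for each fixed $k$ the inner $j$-sum reads
\[
	\sum_{j=0}^\infty\int_{\Delta_j(t_0,T)}\fF_j[K^\top](s,t_0)\,\fF_{j+k}[\psi](s,t_0,t)\rd s=\fF_k[K^\top\bstar\psi](t_0,t),\qquad(t_0,t)\in\Delta_{k+1}(S,T),
\]
by the very definition of the backward $\star$-product (\cref{bstar_defi_bstarprod}). Hence $\langle K\star\varphi,\psi\rangle_{L^2_\bF(S,T)}=\sum_{k=0}^\infty\langle\fF_k[\varphi],\fF_k[K^\top\bstar\psi]\rangle_{L^2(\Delta_{k+1}(S,T))}$, which by the polarized isometry again equals $\langle\varphi,K^\top\bstar\psi\rangle_{L^2_\bF(S,T)}$.

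It remains to justify the two interchanges of infinite sums and integrals and the applications of Fubini's theorem; this follows from absolute convergence. By the Cauchy--Schwarz inequality the $(n,k)$-term is at most $\|\fF_{n-k}[K]\triangleright\fF_k[\varphi]\|_{L^2(\Delta_{n+1}(S,T))}\|\fF_n[\psi]\|_{L^2(\Delta_{n+1}(S,T))}$ in absolute value, and by \cref{detkernel_lemm_kernel2}(ii), Young's convolution inequality for sequences and the Cauchy--Schwarz inequality for sequences,
\[
	\sum_{n=0}^\infty\sum_{k=0}^n\|\fF_{n-k}[K]\triangleright\fF_k[\varphi]\|_{L^2(\Delta_{n+1}(S,T))}\|\fF_n[\psi]\|_{L^2(\Delta_{n+1}(S,T))}\le\knorm K\knorm_{\cK_\bF(S,T)}\|\varphi\|_{L^2_\bF(S,T)}\|\psi\|_{L^2_\bF(S,T)}<\infty.
\]
I expect the only genuinely delicate part to be the bookkeeping in the simplex-splitting and the re-indexing $n\mapsto(j,k)$ --- in particular keeping the shared variable $u_{n-k}=t_0$ consistent across the $K$-factor, the $\varphi$-factor and the $\psi$-factor, and treating the degenerate cases $k=0$ (empty $\varphi$-tail) and $n=k$ (empty $K$-tail); everything else is a direct consequence of results already established. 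Alternatively, since $\varphi\mapsto K\star\varphi$ and $\psi\mapsto K^\top\bstar\psi$ are bounded on $L^2_\bF(S,T;\bR^d)$ and depend continuously on $K\in\cK_\bF(S,T;\bR^{d\times d})$, one could first reduce to $\varphi$, $\psi$ and $K$ each lying in a single chaos, where the identity collapses to one application of Fubini's theorem and the transpose identity.
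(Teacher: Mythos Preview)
Your proposal is correct and follows essentially the same route as the paper: both expand $\langle K\star\varphi,\psi\rangle_{L^2_\bF(S,T)}$ via the polarized chaos isometry, insert $\fF_n[K\star\varphi]=\sum_{k=0}^n\fF_{n-k}[K]\triangleright\fF_k[\varphi]$, split the simplex and re-index so that the inner sum over $j=n-k$ becomes $\fF_k[K^\top\bstar\psi]$, and then apply the isometry again. Your explicit absolute-convergence bound via \cref{detkernel_lemm_kernel2}(ii) and Young's inequality is exactly the justification the paper's ``together with the Fubini theorem'' is relying on.
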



\begin{proof}
By the isometry, together with the Fubini theorem, we have
\begin{align*}
	\langle K\star\varphi,\psi\rangle_{L^2_\bF(S,T)}&=\sum^\infty_{n=0}\langle\fF_n[K\star\varphi],\fF_n[\psi]\rangle_{L^2(\Delta_{n+1}(S,T))}\\
	&=\sum^\infty_{n=0}\Bigl\langle\sum^n_{k=0}\fF_{n-k}[K]\triangleright\fF_k[\varphi],\fF_n[\psi]\Bigr\rangle_{L^2(\Delta_{n+1}(S,T))}\\
	&=\sum^\infty_{k=0}\sum^\infty_{n=k}\langle\fF_{n-k}[K]\triangleright\fF_k[\varphi],\fF_n[\psi]\rangle_{L^2(\Delta_{n+1}(S,T))}\\
	&=\sum^\infty_{k=0}\sum^\infty_{n=k}\int_{\Delta_{k+1}(S,T)}\int_{\Delta_{n-k}(\max t,T)}\langle\fF_{n-k}[K](s,\max t)\fF_k[\varphi](t),\fF_n[\psi](s,t)\rangle\rd s\rd t\\
	&=\sum^\infty_{k=0}\int_{\Delta_{k+1}(S,T)}\Bigl\langle\fF_k[\varphi](t),\sum^\infty_{n=k}\int_{\Delta_{n-k}(\max t,T)}\fF_{n-k}[K](s,\max t)^\top\fF_n[\psi](s,t)\rd s\Bigr\rangle\rd t\\
	&=\sum^\infty_{k=0}\int_{\Delta_{k+1}(S,T)}\langle\fF_k[\varphi](t),\fF_k[K^\top\bstar\psi](t)\rangle\rd t\\
	&=\sum^\infty_{k=0}\langle\fF_k[\varphi],\fF_k[K^\top\bstar\psi]\rangle_{L^2(\Delta_{k+1}(S,T))}\\
	&=\langle\varphi,K^\top\bstar\psi\rangle_{L^2_\bF(S,T)}.
\end{align*}
Thus, we get the assertion.
\end{proof}


\subsection{Backward $\ast$-product}\label{subsection_bast}

Next, we investigate the term $J^\top\bast Y$ appearing in equation \eqref{BSVIE_eq_BSVIEalg}.


\begin{defi}\label{bast_defi_bastprod}
For each $\Xi\in L^2_{\bF,\ast}(\Delta_2(S,T);\bR^{d_1\times d_2})$ and $\xi\in L^2_\bF(S,T;\bR^{d_2\times d_3})$ with $d_1,d_2,d_3\in\bN$, we define the \emph{backward $\ast$-product} $\Xi\bast\xi$ by
\begin{equation*}
	(\Xi\bast\xi)(t):=\bE_t\Bigl[\int^T_t\Xi(s,t)\xi(s)\rd s\Bigr],\ t\in(S,T).
\end{equation*}
Also, for each $\Xi_1\in L^2_{\bF,\ast}(\Delta_2(S,T);\bR^{d_1\times d_2})$ and $\Xi_2\in  L^2_{\bF,\ast}(\Delta_2(S,T);\bR^{d_2\times d_3})$, we define
\begin{equation*}
	(\Xi_1\bast \Xi_2)(t,s):=\bE_t\Bigl[\int^T_t\Xi_1(r,t)\Xi_2(r,s)\rd r\Bigr],\ (t,s)\in\Delta_2(S,T).
\end{equation*}
\end{defi}


\begin{lemm}\label{bast_lemm_chaos}
For each $\Xi\in  L^2_{\bF,\ast}(\Delta_2(S,T);\bR^{d_1\times d_2})$ and $\xi\in L^2_\bF(S,T;\bR^{d_2\times d_3})$ with $d_1,d_2,d_3\in\bN$, the backward $\ast$-product $\Xi\bast\xi$ is in $L^2_\bF(S,T;\bR^{d_1\times d_3})$, and the Wiener--It\^{o} chaos expansion satisfies
\begin{equation*}
	\fF_n[\Xi\bast\xi](t_0,t)=\sum^\infty_{k=n}\int_{\Delta_{k-n+1}(t_0,T)}\bfF_{k-n}[\Xi](s,t_0)\fF_k[\xi](s,t)\rd s,
\end{equation*}
for each $n\in\bN_0$ and $(t_0,t)=(t_0,t_1,\mathalpha{\dots},t_n)\in\Delta_{n+1}(S,T)$. Furthermore, for each $\Xi_1\in  L^2_{\bF,\ast}(\Delta_2(S,T);\bR^{d_1\times d_2})$ and $\Xi_2\in  L^2_{\bF,\ast}(\Delta_2(S,T);\bR^{d_2\times d_3})$, the backward $\ast$-product $\Xi_1\bast\Xi_2$ is in $L^2_{\bF,\ast}(\Delta_2(S,T);\bR^{d_1\times d_3})$, and the Wiener--It\^{o} chaos expansion satisfies
\begin{equation*}
	\bfF_n[\Xi_1\bast\Xi_2](t_0,t)=\sum^\infty_{k=n}\int_{\Delta_{k-n+1}(t_0,T)}\bfF_{k-n}[\Xi_1](s,t_0)\bfF_k[\Xi_2](s,t)\rd s,
\end{equation*}
for each $n\in\bN_0$ and $(t_0,t)=(t_0,t_1,\mathalpha{\dots},t_n,t_{n+1})\in\Delta_{n+2}(S,T)$.
\end{lemm}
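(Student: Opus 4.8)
The plan is to prove both assertions by one and the same computation: insert the Wiener--It\^o chaos expansions $\Xi=\sum^\infty_{n=0}\bfW_n[\bfF_n[\Xi]]$ and $\xi=\sum^\infty_{m=0}\fW_m[\fF_m[\xi]]$ (from \cref{chaos_prop_chaos}) into $(\Xi\bast\xi)(t)=\bE_t[\int^T_t\Xi(s,t)\xi(s)\rd s]$ and evaluate the generic term
\begin{equation*}
	\bE_t\Bigl[\int^T_t\bfW_n[\bfF_n[\Xi]](s,t)\,\fW_m[\fF_m[\xi]](s)\rd s\Bigr],\qquad n,m\in\bN_0 .
\end{equation*}
For fixed $s\in(t,T)$ I would split the iterated stochastic integral $\fW_m[\fF_m[\xi]](s)$ according to how many of its integration times lie in $(t,s)$ and how many in $(S,t)$. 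Since $\bfW_n[\bfF_n[\Xi]](s,t)$ is a multiple Wiener integral of order $n$ driven by the increments of $W$ on $(t,s)$ (hence independent of $\cF_t$), the orthogonality/isometry of iterated Wiener integrals on the window $(t,s)$ shows that, after applying $\bE_t$, the only surviving piece is the one with exactly $n$ integration times in $(t,s)$ --- which occurs only when $m\ge n$ --- leaving an element of the $(m-n)$-th chaos whose $(S,t)$-part is built from $\fF_m[\xi]$. Moving the $\rd s$-integration inside by the stochastic Fubini theorem, that term has chaos kernel $\int_{\Delta_{n+1}(t_0,T)}\bfF_n[\Xi](s,t_0)\fF_m[\xi](s,t)\rd s$. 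Summing over $n\ge0$ and over the chaos order and re-indexing by $k:=m$ (so the outer order $k-n$ runs over $\bN_0$) gives the claimed formula for $\fF_n[\Xi\bast\xi]$; running the same argument with $\fW_m[\fF_m[\xi]]$ replaced by $\bfW_m[\bfF_m[\Xi_2]]$ gives the formula for $\bfF_n[\Xi_1\bast\Xi_2]$.

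To turn this into a proof and, simultaneously, to get $\Xi\bast\xi\in L^2_\bF(S,T;\bR^{d_1\times d_3})$, I would take the right-hand side of the claimed formula as the definition of $\fF_n:=\fF_n[\Xi\bast\xi]$ and estimate it. Applying the Cauchy--Schwarz inequality in the ``extra'' variable $s\in\Delta_{k-n+1}(t_0,T)$ (together with $|AB|\le|A|_\op|B|$) gives, for each fixed $(t_0,t)=(t_0,t_1,\dots,t_n)$, the pointwise bound $|\fF_n(t_0,t)|^2\le A(t_0)B_n(t_0,t)$, where $A(t_0):=\sum^\infty_{m=0}\int_{\Delta_{m+1}(t_0,T)}|\bfF_m[\Xi](s,t_0)|^2_\op\rd s$ and $B_n(t_0,t):=\sum^\infty_{m=0}\int_{\Delta_{m+1}(t_0,T)}|\fF_{n+m}[\xi](s,t)|^2\rd s$. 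Then $\int^T_SA(t_0)\rd t_0=\sum^\infty_{m=0}\|\bfF_m[\Xi]\|^2_{L^2(\Delta_{m+2}(S,T)),\op}\le\|\Xi\|^2_{L^2_{\bF,\ast}(\Delta_2(S,T))}$, while the key observation is that for each fixed $t_0$ the regions $\{(s,t):s\in\Delta_{m+1}(t_0,T),\ t\in\Delta_n(S,t_0)\}$, taken over all $(n,m)$ with $n+m=k$, are pairwise disjoint subsets of $\Delta_{k+1}(S,T)$ (they are distinguished by which of the $k+1$ ordered arguments $t_0$ sits next to); consequently $\sum^\infty_{n=0}\int_{\Delta_n(S,t_0)}B_n(t_0,t)\rd t\le\sum^\infty_{k=0}\|\fF_k[\xi]\|^2_{L^2(\Delta_{k+1}(S,T))}=\|\xi\|^2_{L^2_\bF(S,T)}$ uniformly in $t_0$. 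Combining, $\sum^\infty_{n=0}\|\fF_n\|^2_{L^2(\Delta_{n+1}(S,T))}\le\|\Xi\|^2_{L^2_{\bF,\ast}(\Delta_2(S,T))}\|\xi\|^2_{L^2_\bF(S,T)}<\infty$, so $\sum^\infty_{n=0}\fW_n[\fF_n]$ converges in $L^2_\bF(S,T;\bR^{d_1\times d_3})$; this bound also justifies all the interchanges of summation, integration and conditional expectation in the first paragraph, identifying the limit with $\Xi\bast\xi$ (which is $\bF$-adapted by construction). As an alternative route to $L^2$-membership, one can use the duality identity $\langle\Xi\ast\varphi,\psi\rangle_{L^2_\bF(S,T)}=\langle\varphi,\Xi^\top\bast\psi\rangle_{L^2_\bF(S,T)}$, proved by Fubini together with pulling $\bE_s$ through $\langle\varphi(s),\cdot\rangle$, since $\varphi\mapsto\Xi\ast\varphi$ is bounded on $L^2_\bF(S,T)$ with norm $\le\|\Xi\|_{L^2_{\bF,\ast}(\Delta_2(S,T))}$ by the estimate used in the proof of \cref{ast_lemm_chaos}.

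For the second assertion the computation is the same, with $\xi$ replaced by $\Xi_2$ and $\fW_m$ by $\bfW_m$: the second time-parameter of $\Xi_2$ is simply carried along, and $\cF^s_t$-measurability of $(\Xi_1\bast\Xi_2)(t,s)$ is inherited from that of $\Xi_1$ and $\Xi_2$ (together with the independence of $\cF^S_T$ from $\cF_S$), so $\Xi_1\bast\Xi_2\in L^2_{\bF,\ast}(\Delta_2(S,T);\bR^{d_1\times d_3})$. I expect the main obstacle to be the orthogonality/conditional-expectation bookkeeping for products of iterated Wiener integrals over the nested windows $(S,t)\subset(S,s)$, together with the (routine but fiddly) index matching among the simplices $\Delta_{n+1}$, $\Delta_{m+1}$ and $\Delta_{k+1}$ and the disjointness argument that yields the uniform-in-$t_0$ bound.
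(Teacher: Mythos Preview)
Your proposal is correct and, for the core chaos computation, follows the same route as the paper: expand $\Xi$ and $\xi$ in their Wiener--It\^o chaos, compute $\bE_t[\bfW_m[\Xi](s,t)\,\fW_n[\xi](s)]$ term by term (the paper does this by writing out the iterated integrals explicitly and truncating at $t$, which is precisely your ``split the integration times at $t$'' argument), apply the stochastic Fubini theorem, and re-index.

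The one genuine difference is how $L^2$-membership is established. The paper does this \emph{first} and directly from the definition: by conditional H\"older and the independence of $\int^T_t|\Xi(s,t)|^2\rd s$ from $\cF_t$ one gets $\|\Xi\bast\xi\|_{L^2_\bF(S,T)}\le\|\Xi\|_{L^2_{\bF,\ast}(\Delta_2(S,T))}\|\xi\|_{L^2_\bF(S,T)}$ in a few lines, and the resulting continuity of $\Xi\mapsto\Xi\bast\xi$ and $\xi\mapsto\Xi\bast\xi$ is what justifies passing the double chaos sum through the conditional expectation and the $\rd s$-integral. Your route---estimating $\sum_n\|\fF_n\|^2$ via Cauchy--Schwarz and the disjointness (for fixed $t_0$ and $k=n+m$) of the regions $\Delta_{m+1}(t_0,T)\times\Delta_n(S,t_0)$ inside $\Delta_{k+1}(S,T)$---is more work but also correct, and has the merit of bounding the chaos coefficients directly while yielding the same product bound. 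Either path lands at the same formula; the paper's is shorter, yours is more self-contained.
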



\begin{proof}
We prove the first assertion. The second can be proved similarly. By using the conditional H\"{o}lder's inequality and noting that $\Xi(s,t)$ is independent of $\cF_t$, we have
\begin{align*}
	\bE\Bigl[\int^T_S\Bigl|\bE_t\Bigl[\int^T_t\Xi(s,t)\xi(s)\rd s\Bigr]\Bigr|^2\rd t\Bigr]&\leq\bE\Bigl[\int^T_S\bE_t\Bigl[\int^T_t|\Xi(s,t)||\xi(s)|\rd s\Bigr]^2\rd t\Bigr]\\
	&\leq\bE\Bigl[\int^T_S\bE_t\Bigl[\int^T_t|\Xi(s,t)|^2\rd s\Bigr]\bE_t\Bigl[\int^T_t|\xi(s)|^2\rd s\Bigr]\rd t\Bigr]\\
	&=\int^T_S\bE\Bigl[\int^T_t|\Xi(s,t)|^2\rd s\Bigr]\bE\Bigl[\int^T_t|\xi(s)|^2\rd s\Bigr]\rd t\\
	&\leq\bE\Bigl[\int^T_S\int^T_t|\Xi(s,t)|^2\rd s\rd t\Bigr]\bE\Bigl[\int^T_S|\xi(s)|^2\rd s\Bigr]\\
	&<\infty.
\end{align*}
This implies that $\Xi\bast\xi\in L^2_\bF(S,T;\bR^{d_1\times d_3})$, and that the operations $\Xi\mapsto\Xi\bast\xi$ and $\xi\mapsto\Xi\bast\xi$ are continuous. Observe that
\begin{equation*}
	\Xi\bast\xi=\Bigl(\sum^\infty_{m=0}\bfW_m[\Xi]\Bigr)\bast\Bigl(\sum^\infty_{n=0}\fW_n[\xi]\Bigr)=\sum^\infty_{m,n=0}\bfW_m[\Xi]\bast\fW_n[\xi].
\end{equation*}
For each $m,n\in\bN_0$ and $t\in(S,T)$, we have
\begin{align*}
	(\bfW_m[\Xi]\bast\fW_n[\xi])(t)&=\bE_t\Bigl[\int^T_t\bfW_m[\Xi](s,t)\fW_n[\xi](s)\rd s\Bigr]\\
	&=\int^T_t\bE_t\Bigl[\Bigl(\int^s_t\int^{t_1}_t\mathalpha{\cdots}\int^{t_{m-1}}_t\bfF_m[\Xi](s,t_1,\mathalpha{\dots},t_m,t)\rd W(t_m)\mathalpha{\cdots}\rd W(t_1)\Bigr)\\
	&\hspace{1.5cm}\times\Bigl(\int^s_S\int^{t_1}_S\mathalpha{\cdots}\int^{t_{n-1}}_S\fF_n[\xi](s,t_1,\mathalpha{\dots},t_n)\rd W(t_n)\mathalpha{\cdots}\rd W(t_1)\Bigr)\Bigr]\rd s.
\end{align*}
If $m>n$, we get
\begin{align*}
	&(\bfW_m[\Xi]\bast\fW_n[\xi])(t)\\
	&=\int^T_t\int^s_t\int^{t_1}_t\mathalpha{\cdots}\int^{t_{n-1}}_t\bE_t\Bigl[\int^{t_n}_t\int^{t_{n+1}}_t\mathalpha{\cdots}\int^{t_{m-1}}_t\bfF_m[\Xi](s,t_1,\mathalpha{\dots},t_n,t_{n+1},\mathalpha{\dots},t_m,t)\rd W(t_m)\mathalpha{\cdots}\rd W(t_{n+1})\Bigr]\\
	&\hspace{4cm}\times\fF_n[\xi](s,t_1,\mathalpha{\dots},t_n)\rd t_n\mathalpha{\cdots}\rd t_1\rd s\\
	&=0.
\end{align*}
On the other hand, if $m\leq n$, we have
\begin{align*}
	&(\bfW_m[\Xi]\bast\fW_n[\xi])(t)\\
	&=\int^T_t\int^s_t\int^{t_1}_t\mathalpha{\cdots}\int^{t_{m-1}}_t\bfF_m[\Xi](s,t_1,\mathalpha{\dots},t_m,t)\\
	&\hspace{1cm}\times\bE_t\Bigl[\int^{t_m}_S\int^{t_{m+1}}_S\mathalpha{\cdots}\int^{t_{n-1}}_S\fF_n[\xi](s,t_1,\mathalpha{\dots},t_m,t_{m+1},\mathalpha{\dots},t_n)\rd W(t_n)\mathalpha{\cdots}\rd W(t_{m+1})\Bigr]\rd t_m\mathalpha{\cdots}\rd t_1\rd s\\
	&=\int^T_t\int^s_t\int^{t_1}_t\mathalpha{\cdots}\int^{t_{m-1}}_t\bfF_m[\Xi](s,t_1,\mathalpha{\dots},t_m,t)\\
	&\hspace{1cm}\times\int^t_S\int^{t_{m+1}}_S\mathalpha{\cdots}\int^{t_{n-1}}_S\fF_n[\xi](s,t_1,\mathalpha{\dots},t_m,t_{m+1},\mathalpha{\dots},t_n)\rd W(t_n)\mathalpha{\cdots}\rd W(t_{m+1})\rd t_m\mathalpha{\cdots}\rd t_1\rd s\\
	&=\int^t_S\int^{t_1}_S\mathalpha{\cdots}\int^{t_{n-m-1}}_S\int_{\Delta_{m+1}(t,T)}\bfF_m[\Xi](s,t)\fF_n[\xi](s,t_1,\mathalpha{\dots},t_{n-m})\rd s\rd W(t_{n-m})\mathalpha{\cdots}\rd W(t_1),
\end{align*}
where we used the stochastic Fubini theorem in the last equality. Consequently, we obtain
\begin{align*}
	&(\Xi\bast\xi)(t)\\
	&=\sum_{m\leq n}\int^t_S\int^{t_1}_S\mathalpha{\cdots}\int^{t_{n-m-1}}_S\int_{\Delta_{m+1}(t,T)}\bfF_m[\Xi](s,t)\fF_n[\xi](s,t_1,\mathalpha{\dots},t_{n-m})\rd s\rd W(t_{n-m})\mathalpha{\cdots}\rd W(t_1)\\
	&=\sum^\infty_{n=0}\int^t_S\int^{t_1}_S\mathalpha{\cdots}\int^{t_{n-1}}_S\Bigl\{\sum^\infty_{k=n}\int_{\Delta_{k-n+1}(t,T)}\bfF_{k-n}[\Xi](s,t)\fF_k[\xi](s,t_1,\mathalpha{\dots},t_n)\rd s\Bigr\}\rd W(t_n)\mathalpha{\cdots}\rd W(t_1).
\end{align*}
This implies that
\begin{equation*}
	\fF_n[\Xi\bast\xi](t_0,t_1,\mathalpha{\dots},t_n)=\sum^\infty_{k=n}\int_{\Delta_{k-n+1}(t_0,T)}\bfF_{k-n}[\Xi](s,t_0)\fF_k[\xi](s,t_1,\mathalpha{\dots},t_n)\rd s
\end{equation*}
for each $n\in\bN_0$ and $(t_0,t_1,\mathalpha{\dots},t_n)\in\Delta_{n+1}(S,T)$. This completes the proof.
\end{proof}


\begin{prop}\label{bast_prop_alg}
For each $J,J_1,J_2,J_3\in\cJ_\bF(S,T;\bR^{d\times d})$, $\xi,\xi_1,\xi_2\in L^2_\bF(S,T;\bR^{d\times d_1})$ with $d_1\in\bN$, $K\in\cK_\bF(S,T;\bR^{d\times d})$ and $\alpha\in\bR$, the following hold:
\begin{gather}
	J_1\bast J_2\in\cJ_\bF(S,T;\bR^{d\times d})\ \text{and}\ \knorm J_1\bast J_2\knorm_{\cJ_\bF(S,T)}\leq\knorm J_1\knorm_{\cJ_\bF(S,T)}\knorm J_2\knorm_{\cJ_\bF(S,T)},\label{bast_eq_JJ}\\
	J\bast\xi\in L^2_\bF(S,T;\bR^{d\times d_1})\ \text{and}\ \|J\bast\xi\|_{L^2_\bF(S,T)}\leq\knorm J\knorm_{\cJ_\bF(S,T)}\|\xi\|_{L^2_\bF(S,T)},\label{bast_eq_Jxi}\\
	J\bast K\in \cK_\bF(S,T;\bR^{d\times d})\ \text{and}\ \knorm J\bast K\knorm_{\cK_\bF(S,T)}\leq\knorm J\knorm_{\cJ_\bF(S,T)}\knorm K\knorm_{\cK_\bF(S,T)},\label{bast_eq_JK}
\end{gather}
and
\begin{gather}
	J_1\bast(J_2+J_3)=J_1\bast J_2+J_1\bast J_3,\nonumber\\
	(J_1+J_2)\bast J_3=J_1\bast J_3+J_2\bast J_3,\nonumber\\
	\alpha(J_1\bast J_2)=(\alpha J_1)\bast J_2=J_1\bast(\alpha J_2),\nonumber\\
	J\bast(\xi_1+\xi_2)=J\bast\xi_1+J\bast\xi_2,\nonumber\\
	(J_1+J_2)\bast\xi=J_1\bast\xi+J_2\bast\xi,\nonumber\\
	\alpha(J\bast\xi)=(\alpha J)\bast\xi=J\bast(\alpha\xi),\nonumber\\
	J_1\bast(J_2\bast J_3)=(J^\top_2\ast J^\top_1)^\top\bast J_3,\label{bast_eq_JJJ}\\
	J_1\bast(J_2\bast\xi)=(J^\top_2\ast J^\top_1)^\top\bast\xi,\label{bast_eq_JJxi}\\
	J\bast(K\bstar\xi)=(K^\top\star J^\top)^\top\bast\xi,\label{bast_eq_JKxi}\\
	K\bstar(J\bast\xi)=(J^\top\ast K^\top)^\top\bstar\xi.\label{bast_eq_KJxi}
\end{gather}
\end{prop}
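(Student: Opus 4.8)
The plan is to treat \cref{bast_prop_alg} in three groups, reusing the arguments already developed for the forward products. The linearity and scalar-homogeneity identities in the second block are immediate from \cref{bast_defi_bastprod} and the linearity of conditional expectation, so the work is concentrated in the three operator-norm bounds \eqref{bast_eq_JJ}, \eqref{bast_eq_Jxi}, \eqref{bast_eq_JK} and the four ``reverse-associativity'' identities \eqref{bast_eq_JJJ}--\eqref{bast_eq_KJxi}.

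For the norm bounds I would start from the Wiener--It\^o chaos representations of the backward $\ast$-product in \cref{bast_lemm_chaos}. Since $\fF_n[J\bast\xi](t_0,t)=\sum_{k\geq n}\int_{\Delta_{k-n+1}(t_0,T)}\bfF_{k-n}[J](s,t_0)\fF_k[\xi](s,t)\rd s$ is again a ``convolution in the chaos index'', the same chain of estimates as in \cref{bstar_lemm_bstarwelldef} applies: Minkowski's inequality pulls the sum over $k$ outside the $L^2(\Delta_{n+1}(S,T))$-norm, H\"older's inequality splits the inner $\rd s$-integral, and bounding $|\bfF_{k-n}[J]|$ by $|\bfF_{k-n}[J]|_\op$ yields $\|\fF_n[J\bast\xi]\|_{L^2(\Delta_{n+1}(S,T))}\leq\sum_{k\geq n}\|\bfF_{k-n}[J]\|_{L^2(\Delta_{k-n+2}(S,T)),\op}\,\|\fF_k[\xi]\|_{L^2(\Delta_{k+1}(S,T))}$. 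Squaring, summing over $n$, and applying Young's convolution inequality together with the isometry of \cref{chaos_prop_chaos} gives \eqref{bast_eq_Jxi}; the identical computation with $\|\fF_k[\xi]\|$ replaced by $\knorm\fF_k[K]\knorm_{\cV_{k+1}(S,T)}$ gives \eqref{bast_eq_JK}, and (using the second formula of \cref{bast_lemm_chaos}) with $\|\bfF_k[J_2]\|_{L^2,\op}$ in its place gives \eqref{bast_eq_JJ}. The membership statements ($J_1\bast J_2\in\cJ_\bF$, $J\bast\xi\in L^2_\bF$, $J\bast K\in\cK_\bF$) were already recorded while proving \cref{bast_lemm_chaos} and \cref{bstar_lemm_bstarwelldef}.

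For \eqref{bast_eq_JJxi} (and \eqref{bast_eq_JJJ}, which is the same computation with an extra frozen parameter) I would argue directly. Unfolding the definition, $(J_1\bast(J_2\bast\xi))(t)=\bE_t[\int_t^TJ_1(r,t)\bE_r[\int_r^TJ_2(u,r)\xi(u)\rd u]\rd r]$; since $J_1(r,t)$ is $\cF^t_r$-, hence $\cF_r$-measurable, it moves inside $\bE_r$, and the tower property with Fubini for conditional expectations collapses the nested $\bE_t\bE_r$ to $\bE_t[\int_t^TJ_1(r,t)\int_r^TJ_2(u,r)\xi(u)\rd u\rd r]$. A Fubini exchange in $(r,u)$ over the simplex $t<r<u<T$ produces $\bE_t[\int_t^T(\int_t^uJ_1(r,t)J_2(u,r)\rd r)\xi(u)\rd u]$, and by \cref{ast_defi_astprod} the inner integral is $(J_2^\top\ast J_1^\top)^\top(u,t)$, so the right-hand side equals $((J_2^\top\ast J_1^\top)^\top\bast\xi)(t)$. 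The only delicate point is the conditional Fubini/tower step, which is routine since all integrands are in $L^1$.

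Finally, for the mixed identities \eqref{bast_eq_JKxi} and \eqref{bast_eq_KJxi} I would use a duality shortcut. First I would record the $\ast$-$\bast$ analogue of \cref{bstar_prop_duality}, namely $\langle J\ast\varphi,\psi\rangle_{L^2_\bF(S,T)}=\langle\varphi,J^\top\bast\psi\rangle_{L^2_\bF(S,T)}$, proved by Fubini and the $\cF_s$-measurability of $\varphi(s)$ exactly as there; equivalently $\langle\varphi,J\bast\psi\rangle=\langle J^\top\ast\varphi,\psi\rangle$, and from \cref{bstar_prop_duality}, $\langle\varphi,K\bstar\psi\rangle=\langle K^\top\star\varphi,\psi\rangle$. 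Pairing \eqref{bast_eq_JKxi} against an arbitrary $\varphi$ and using these twice, $\langle\varphi,J\bast(K\bstar\psi)\rangle=\langle J^\top\ast\varphi,K\bstar\psi\rangle=\langle K^\top\star(J^\top\ast\varphi),\psi\rangle$, while $\langle\varphi,(K^\top\star J^\top)^\top\bast\psi\rangle=\langle(K^\top\star J^\top)\ast\varphi,\psi\rangle=\langle K^\top\star(J^\top\ast\varphi),\psi\rangle$ by \eqref{ast_eq_KJxi} of \cref{ast_prop_aststar}; since $\varphi,\psi$ are arbitrary this yields \eqref{bast_eq_JKxi}, and \eqref{bast_eq_KJxi} follows symmetrically using \eqref{ast_eq_JKxi} instead. (Alternatively, \eqref{bast_eq_JKxi}--\eqref{bast_eq_KJxi} can be obtained by substituting the chaos formulas of \cref{bast_lemm_chaos} and \cref{bstar_defi_bstarprod}, interchanging the two summations, reindexing, and recognizing the resulting $\triangleright$-pattern as the chaos coefficients of $(K^\top\star J^\top)^\top$ resp.\ $(J^\top\ast K^\top)^\top$, exactly as in \cref{bstar_prop_bstaralg}.) The main obstacle, if one insists on the chaos route, is precisely this bookkeeping --- keeping transposes and the interplay of $\triangleright$ with matrix multiplication aligned through a double interchange of sums and integrals; the duality route sidesteps it at the cost of first proving the (easy) $\ast$-$\bast$ duality.
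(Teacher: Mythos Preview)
Your proposal is correct. For the norm bounds \eqref{bast_eq_JJ}--\eqref{bast_eq_JK} and for \eqref{bast_eq_JJxi} (hence \eqref{bast_eq_JJJ}) your approach coincides with the paper's: it too refers back to the argument of \cref{bstar_lemm_bstarwelldef} for the bounds, and for \eqref{bast_eq_JJxi} it carries out precisely the tower-property/Fubini computation you describe.

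The difference lies in \eqref{bast_eq_JKxi} and \eqref{bast_eq_KJxi}. The paper proves these by the ``chaos route'' you mention only as an alternative: it writes out $\fF_n[J\bast(K\bstar\xi)]$ and $\fF_n[K\bstar(J\bast\xi)]$ via \cref{bast_lemm_chaos} and \cref{bstar_defi_bstarprod}, interchanges the double sum, reindexes, and recognizes the resulting expressions as $\bfF_{\ell-n}[(K^\top\star J^\top)^\top]$ and $\fF_{\ell-n}[(J^\top\ast K^\top)^\top]$ respectively. Your duality shortcut instead first establishes the easy identity $\langle J\ast\varphi,\psi\rangle=\langle\varphi,J^\top\bast\psi\rangle$ (which the paper postpones to \cref{bast_prop_duality}, proved independently of \cref{bast_prop_alg}, so there is no circularity) and then reduces \eqref{bast_eq_JKxi}--\eqref{bast_eq_KJxi} to the forward identities \eqref{ast_eq_KJxi}, \eqref{ast_eq_JKxi} of \cref{ast_prop_aststar} by two applications of duality. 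This is a genuinely different, and cleaner, argument: it replaces the transpose/index bookkeeping by a short adjoint computation. The paper's route has the minor advantage of being self-contained at the level of chaos coefficients and not requiring the (trivial) extension of the duality to $\bR^{d\times d_1}$-valued processes.
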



\begin{proof}
Noting \cref{bast_lemm_chaos}, the assertions \eqref{bast_eq_JJ}, \eqref{bast_eq_Jxi} and \eqref{bast_eq_JK} can be proved by the same way as in \cref{bstar_lemm_bstarwelldef}, and thus we omit the proofs. We prove \eqref{bast_eq_JJxi}, \eqref{bast_eq_JKxi} and \eqref{bast_eq_KJxi}. The equality \eqref{bast_eq_JJJ} can be proved by the same way as \eqref{bast_eq_JJxi}, and the other assertions are clear from the definition.

First, we prove \eqref{bast_eq_JJxi}. Noting the measurability of $J_1$ and using Fubini's theorem, we see that, for each $t\in(S,T)$,
\begin{align*}
	(J_1\bast(J_2\bast\xi))(t)&=\bE_t\Bigl[\int^T_tJ_1(s,t)(J_2\bast\xi)(s)\rd s\Bigr]\\
	&=\bE_t\Bigl[\int^T_tJ_1(s,t)\bE_s\Bigl[\int^T_sJ_2(r,s)\xi(r)\rd r\Bigr]\rd s\Bigr]\\
	&=\bE_t\Bigl[\int^T_tJ_1(s,t)\int^T_sJ_2(r,s)\xi(r)\rd r\rd s\Bigr]\\
	&=\bE_t\Bigl[\int^T_t\Bigl(\int^r_tJ_2(r,s)^\top J_1(s,t)^\top\rd s\Bigr)^\top\xi(r)\rd r\Bigr]\\
	&=\bE_t\Bigl[\int^T_t(J^\top_2\ast J^\top_1)(r,t)^\top\xi(r)\rd r\Bigr]\\
	&=((J^\top_2\ast J^\top_1)^\top\bast\xi)(t).
\end{align*}
Thus, \eqref{bast_eq_JJxi} holds.

Next, we prove \eqref{bast_eq_JKxi}. By \cref{bast_lemm_chaos}, for each $n\in\bN_0$ and $(t_0,t)=(t_0,t_1,\mathalpha{\dots},t_n)\in\Delta_{n+1}(S,T)$,
\begin{align*}
	&\fF_n[J\bast(K\bstar\xi)](t_0,t)\\
	&=\sum^\infty_{k=n}\int_{\Delta_{k-n+1}(t_0,T)}\bfF_{k-n}[J](s,t_0)\fF_k[K\bstar\xi](s,t)\rd s\\
	&=\sum^\infty_{k=n}\int_{\Delta_{k-n+1}(t_0,T)}\bfF_{k-n}[J](s,t_0)\Bigl\{\sum^\infty_{\ell=k}\int_{\Delta_{\ell-k}(\max s,T)}\fF_{\ell-k}[K](r,\max s)\fF_\ell[\xi](r,s,t)\rd r\Bigr\}\rd s\\
	&=\sum^\infty_{k=n}\sum^\infty_{\ell=k}\int_{\Delta_{\ell-n+1}(t_0,T)}(\fF_{\ell-k}[K^\top]\triangleright\bfF_{k-n}[J^\top])(s,t_0)^\top\fF_\ell[\xi](s,t)\rd s\\
	&=\sum^\infty_{\ell=n}\int_{\Delta_{\ell-n+1}(t_0,T)}\Bigl\{\sum^{\ell-n}_{k=0}(\fF_{\ell-n-k}[K^\top]\triangleright\bfF_k[J^\top])(s,t_0)\Bigr\}^\top\fF_\ell[\xi](s,t)\rd s\\
	&=\sum^\infty_{\ell=n}\int_{\Delta_{\ell-n+1}(t_0,T)}\bfF_{\ell-n}[(K^\top\star J^\top)^\top](s,t_0)\fF_\ell[\xi](s,t)\rd s\\
	&=\fF_n[(K^\top\star J^\top)^\top\bast\xi](t_0,t).
\end{align*}
Thus, \eqref{bast_eq_JKxi} holds.

Lastly, we prove \eqref{bast_eq_KJxi}. By \cref{bast_lemm_chaos} and \cref{ast_lemm_chaos}, for each $n\in\bN_0$ and $t\in\Delta_{n+1}(S,T)$,
\begin{align*}
	&\fF_n[K\bstar(J\bast\xi)](t)\\
	&=\sum^\infty_{k=n}\int_{\Delta_{k-n}(\max t,T)}\fF_{k-n}[K](s,\max t)\fF_k[J\bast\xi](s,t)\rd s\\
	&=\sum^\infty_{k=n}\int_{\Delta_{k-n}(\max t,T)}\fF_{k-n}[K](s_0,s,\max t)\Bigl\{\sum^\infty_{\ell=k}\int_{\Delta_{\ell-k+1}(s_0,T)}\bfF_{\ell-k}[J](r,s_0)\fF_\ell[\xi](r,s,t)\rd r\Bigr\}\rd (s_0,s)\\
	&=\sum^\infty_{\ell=n}\int_{\Delta_{\ell-n}(\max t,T)}\Bigl\{\sum^\ell_{k=n}\int^{\min r}_{\max s}\bfF_{\ell-k}[J](r,s_0)^\top\fF_{k-n}[K](s_0,s,\max t)^\top\rd s_0\Bigr\}^\top\fF_\ell[\xi](r,s,t)\rd (r,s)\\
	&=\sum^\infty_{\ell=n}\int_{\Delta_{\ell-n}(\max t,T)}\Bigl\{\sum^\ell_{k=n}(\bfF_{\ell-k}[J^\top]\ast\fF_{k-n}[K^\top])(s,\max t)\Bigr\}^\top\fF_\ell[\xi](s,t)\rd s\\
	&=\sum^\infty_{\ell=n}\int_{\Delta_{\ell-n}(\max t,T)}\Bigl\{\sum^{\ell-n}_{k=0}(\bfF_{\ell-n-k}[J^\top]\ast\fF_k[K^\top])(s,\max t)\Bigr\}^\top\fF_\ell[\xi](s,t)\rd s\\
	&=\sum^\infty_{\ell=n}\int_{\Delta_{\ell-n}(\max t,T)}\fF_{\ell-n}[(J^\top\ast K^\top)^\top](s,\max t)\fF_\ell[\xi](s,t)\rd s\\
	&=\fF_n[(J^\top\ast K^\top)^\top\bstar\xi](t).
\end{align*}
Thus, \eqref{bast_eq_KJxi} holds, and we finish the proof.
\end{proof}

The following is a \emph{duality principle} with respect to the $\ast$-product and the backward $\ast$-product.


\begin{prop}\label{bast_prop_duality}
For each $\ast$-Volterra kernel $J\in\cJ_\bF(S,T;\bR^{d\times d})$ and $\varphi,\psi\in L^2_\bF(S,T;\bR^d)$, it holds that
\begin{equation*}
	\langle J\ast\varphi,\psi\rangle_{L^2_\bF(S,T)}=\langle\varphi,J^\top\bast\psi\rangle_{L^2_\bF(S,T)}.
\end{equation*}
\end{prop}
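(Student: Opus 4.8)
The plan is to prove the identity directly, using Fubini's theorem and the tower property of conditional expectations, in the spirit of (but simpler than) the proof of \cref{bstar_prop_duality}; no Wiener--It\^{o} chaos expansion is needed here, since the $\ast$-product $J\ast\varphi$ is defined through a pathwise Lebesgue integral.

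First I would unwind the definitions. Recall from \cref{ast_defi_astprod} that $(J\ast\varphi)(t)=\int^t_SJ(t,s)\varphi(s)\rd s$ and from \cref{bast_defi_bastprod} that $(J^\top\bast\psi)(t)=\bE_t[\int^T_tJ(s,t)^\top\psi(s)\rd s]$. Hence
\[
	\langle J\ast\varphi,\psi\rangle_{L^2_\bF(S,T)}=\bE\Bigl[\int^T_S\Bigl\langle\int^t_SJ(t,s)\varphi(s)\rd s,\psi(t)\Bigr\rangle\rd t\Bigr].
\]
The map $(\omega,t,s)\mapsto|J(t,s)|\,|\varphi(s)|\,|\psi(t)|$ is integrable on $\Omega\times\Delta_2(S,T)$: the estimate carried out in the proof of \cref{ast_lemm_chaos} (case $n=0$) shows that $t\mapsto\int^t_S|J(t,s)|\,|\varphi(s)|\rd s$ belongs to $L^2_\cF(S,T;\bR)$, and pairing it with $\psi\in L^2_\bF(S,T;\bR^d)$ via the Cauchy--Schwarz inequality gives a finite quantity. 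Thus Fubini's theorem applies; exchanging the order of the $\rd s$- and $\rd t$-integrations over $\{S<s<t<T\}$ and using $\langle Av,w\rangle=\langle v,A^\top w\rangle$ for $A\in\bR^{d\times d}$ and $v,w\in\bR^d$, I obtain
\[
	\langle J\ast\varphi,\psi\rangle_{L^2_\bF(S,T)}=\bE\Bigl[\int^T_S\Bigl\langle\varphi(s),\int^T_sJ(t,s)^\top\psi(t)\rd t\Bigr\rangle\rd s\Bigr].
\]

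Next I would bring in the conditional expectation: since $\varphi(s)$ is $\cF_s$-measurable, the tower property gives, for a.e.\ $s$, $\bE[\langle\varphi(s),\int^T_sJ(t,s)^\top\psi(t)\rd t\rangle]=\bE[\langle\varphi(s),\bE_s[\int^T_sJ(t,s)^\top\psi(t)\rd t]\rangle]$, and the inner conditional expectation is precisely $(J^\top\bast\psi)(s)$ by \cref{bast_defi_bastprod}. Integrating in $s$ then yields $\langle J\ast\varphi,\psi\rangle_{L^2_\bF(S,T)}=\langle\varphi,J^\top\bast\psi\rangle_{L^2_\bF(S,T)}$, which is the claim. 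I do not anticipate a genuine obstacle; the only point needing a little care is the justification of Fubini's theorem, i.e.\ the absolute integrability mentioned above, which is supplied by reusing the estimate already established for \cref{ast_lemm_chaos}. (Alternatively one could imitate the chaos-expansion computation of \cref{bstar_prop_duality} via \cref{ast_lemm_chaos}, \cref{bast_lemm_chaos} and Fubini on the deterministic simplices, but the route above is shorter.)
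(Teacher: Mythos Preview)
Your proposal is correct and follows essentially the same route as the paper: unwind the definitions, apply Fubini's theorem to swap the $\rd s$- and $\rd t$-integrations, use the $\cF_s$-measurability of $\varphi(s)$ together with the tower property to insert $\bE_s$, and recognise the resulting expression as $(J^\top\bast\psi)(s)$. The only difference is that you make the integrability justification for Fubini explicit by pointing to the estimate in \cref{ast_lemm_chaos}, whereas the paper leaves this implicit.
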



\begin{proof}
By using Fubini's theorem, we have
\begin{align*}
	\langle J\ast\varphi,\psi\rangle_{L^2_\bF(S,T)}&=\bE\Bigl[\int^T_S\langle(J\ast\varphi)(t),\psi(t)\rangle\rd t\Bigr]\\
	&=\bE\Bigl[\int^T_S\Bigl\langle\int^t_SJ(t,s)\varphi(s)\rd s,\psi(t)\Bigr\rangle\rd t\Bigr]\\
	&=\bE\Bigl[\int^T_S\Bigl\langle\varphi(s),\int^T_sJ(t,s)^\top\psi(t)\rd t\Bigr\rangle\rd s\Bigr]\\
	&=\bE\Bigl[\int^T_S\Bigl\langle\varphi(s),\bE_s\Bigl[\int^T_sJ(t,s)^\top\psi(t)\rd t\Bigr]\Bigr\rangle\rd s\Bigr]\\
	&=\bE\Bigl[\int^T_S\langle\varphi(s),(J^\top\bast\psi)(s)\rangle\rd s\Bigr]\\
	&=\langle\varphi,J^\top\bast\psi\rangle_{L^2_\bF(S,T)}.
\end{align*}
Thus, we get the assertion.
\end{proof}


\subsection{A variation of constants formula for generalized BSVIEs}\label{subsection_bVCF}

Now we show the variation of constants formula for generalized BSVIEs.


\begin{theo}\label{bVCF_theo_VCF}
Let a $\ast$-Volterra kernel $J\in\cJ_\bF(S,T;\bR^{d\times d})$ and a $\star$-Volterra kernel $K\in\cK_\bF(S,T;\bR^{d\times d})$ be fixed, and suppose that $(J,K)$ has a $(\ast,\star)$-resolvent $(Q,R)\in\cJ_\bF(S,T;\bR^{d\times d})\times\cK_\bF(S,T;\bR^{d\times d})$. Then for any free term $\psi\in L^2_\bF(S,T;\bR^d)$, the generalized BSVIE
\begin{equation*}
	Y=\psi+J^\top\bast Y+K^\top\bstar Y
\end{equation*}
has a unique solution $Y\in L^2_\bF(S,T;\bR^d)$. This solution is given by the variation of constants formula:
\begin{equation*}
	Y=\psi+Q^\top\bast\psi+R^\top\bstar\psi.
\end{equation*}
\end{theo}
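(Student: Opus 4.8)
The plan is to follow the proof of the forward variation of constants formula \cref{fVCF_theo_VCF} almost verbatim, with the $\ast$- and $\star$-products replaced by the backward products $\bast$ and $\bstar$. First I would set $Y:=\psi+Q^\top\bast\psi+R^\top\bstar\psi$, which lies in $L^2_\bF(S,T;\bR^d)$ thanks to the boundedness estimate \eqref{bast_eq_Jxi} in \cref{bast_prop_alg} and the one in \cref{bstar_lemm_bstarwelldef}. Then I would check directly that this $Y$ satisfies $Y=\psi+J^\top\bast Y+K^\top\bstar Y$, and conversely that every solution of that equation coincides with this explicit expression; the two statements together give existence and uniqueness.

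For the verification step I would expand $J^\top\bast Y+K^\top\bstar Y$ by linearity and simplify the four resulting compositions using the ``transpose-reversal'' associativity identities of \cref{bast_prop_alg} and \cref{bstar_prop_bstaralg}: by \eqref{bast_eq_JJxi}, \eqref{bast_eq_JKxi}, \eqref{bast_eq_KJxi} and the identity $K_1\bstar(K_2\bstar\xi)=(K_2^\top\star K_1^\top)^\top\bstar\xi$ one gets $J^\top\bast(Q^\top\bast\psi)=(Q\ast J)^\top\bast\psi$, $J^\top\bast(R^\top\bstar\psi)=(R\star J)^\top\bast\psi$, $K^\top\bstar(Q^\top\bast\psi)=(Q\ast K)^\top\bstar\psi$ and $K^\top\bstar(R^\top\bstar\psi)=(R\star K)^\top\bstar\psi$. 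Collecting terms and transposing, $J^\top\bast Y=(J+Q\ast J+R\star J)^\top\bast\psi$ and $K^\top\bstar Y=(K+Q\ast K+R\star K)^\top\bstar\psi$, and the second forms of the $(\ast,\star)$-resolvent equations in \cref{res_defi_aststarres}, namely $Q=J+Q\ast J+R\star J$ and $R=K+Q\ast K+R\star K$, turn the right-hand side into $\psi+Q^\top\bast\psi+R^\top\bstar\psi=Y$.

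For the converse I would substitute $\psi=Y-J^\top\bast Y-K^\top\bstar Y$ into $Q^\top\bast\psi+R^\top\bstar\psi$ and apply the same identities (with the two kernels now occurring in the opposite order), obtaining $Q^\top\bast\psi=(Q-J\ast Q-K\star Q)^\top\bast Y$ and $R^\top\bstar\psi=(R-J\ast R-K\star R)^\top\bstar Y$; the \emph{first} forms of the resolvent equations $Q=J+J\ast Q+K\star Q$ and $R=K+J\ast R+K\star R$ then collapse these to $J^\top\bast Y$ and $K^\top\bstar Y$, so that $\psi+Q^\top\bast\psi+R^\top\bstar\psi=Y$, as required. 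I expect the only delicate point to be the bookkeeping---keeping track of how transposition reverses the order of the $\ast$- and $\star$-products, and choosing at each stage the appropriate one of the two equivalent forms of each resolvent equation---since no analytic input beyond the estimates and identities already established enters. A more conceptual alternative would bypass this algebra: test the claimed formula against an arbitrary $\varphi\in L^2_\bF(S,T;\bR^d)$ and combine the duality principles \cref{bast_prop_duality} and \cref{bstar_prop_duality} with the forward formula \cref{fVCF_theo_VCF} applied to the free terms $\varphi$, $J\ast\varphi$ and $K\star\varphi$, whose solutions are $\varphi+Q\ast\varphi+R\star\varphi$, $Q\ast\varphi$ and $R\star\varphi$ respectively by the resolvent equations; uniqueness then follows by pairing a homogeneous solution $D$ with $X_\varphi$ and using $X_\varphi=\varphi+J\ast X_\varphi+K\star X_\varphi$.
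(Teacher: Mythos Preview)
Your main argument is correct and matches the paper's proof essentially line for line: you define $Y$ explicitly, verify the equation using the transpose-reversal identities \eqref{bast_eq_JJxi}, \eqref{bast_eq_JKxi}, \eqref{bast_eq_KJxi} and \cref{bstar_prop_bstaralg} together with the second form of the resolvent equations, and then prove uniqueness via the first form, exactly as the paper does. Your alternative duality sketch at the end is not pursued in the paper, but your primary approach is the same as the published one.
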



\begin{proof}
Define $Y\in L^2_\bF(S,T;\bR^d)$ by $Y=\psi+Q^\top\bast\psi+R^\top\bstar\psi$. By \cref{bstar_prop_bstaralg}, \cref{bast_prop_alg} and the resolvent equations, we have
\begin{align*}
	J^\top\bast Y&=J^\top\bast(\psi+Q^\top\bast\psi+R^\top\bstar\psi)\\
	&=J^\top\bast\psi+(Q\ast J)^\top\bast\psi+(R\star J)^\top\bast\psi\\
	&=(J+Q\ast J+R\star J)^\top\bast\psi\\
	&=Q^\top\bast\psi
\end{align*}
and
\begin{align*}
	K^\top\bstar Y&=K^\top\bstar(\psi+Q^\top\bast\psi+R^\top\bstar\psi)\\
	&=K^\top\bstar\psi+(Q\ast K)^\top\bstar\psi+(R\star K)^\top\bstar\psi\\
	&=(K+Q\ast K+R\star K)^\top\bstar\psi\\
	&=R^\top\bstar\psi.
\end{align*}
Thus, we get $Y=\psi+J^\top\bast Y+K^\top\bstar Y$.

Conversely, if $Y\in L^2_\bF(S,T;\bR^d)$ satisfies $Y=\psi+J^\top\bast Y+K^\top\bstar Y$, then we have
\begin{align*}
	Q^\top\bast\psi&=Q^\top\bast(Y-J^\top\bast Y-K^\top\bstar Y)\\
	&=Q^\top\bast Y-(J\ast Q)^\top\bast Y-(K\star Q)^\top\bast Y\\
	&=(Q-J\ast Q-K\star Q)^\top\bast Y\\
	&=J^\top\bast Y
\end{align*}
and
\begin{align*}
	R^\top\bstar\psi&=R^\top\bstar(Y-J^\top\bast Y-K^\top\bstar Y)\\
	&=R^\top\bstar Y-(J\ast R)^\top\bstar Y-(K\star R)^\top\bstar Y\\
	&=(R-J\ast R-K\star R)^\top\bstar Y\\
	&=K^\top\bstar Y.
\end{align*}
Thus, we get $Y=\psi+Q^\top\bast\psi+R^\top\bstar\psi$. This completes the proof.
\end{proof}

From the above result, together with the stochastic integral representation of the backward $\star$-product (see \cref{bstar_prop_integral}), we immediately get the following corollary.


\begin{cor}\label{bVCF_cor_VCF1}
Let $J\in \cJ_\bF(S,T;\bR^{d\times d})$ and $K=\sum^\infty_{n=1}\fW_n[k_n]\in\cK_\bF(S,T;\bR^{d\times d})$ with $k_n\in\cV_{n+1}(S,T;\bR^{d\times d})$, $n\in\bN$, be fixed, and suppose that $(J,K)$ has a $(\ast,\star)$-resolvent $(Q,R)\in\cJ_\bF(S,T;\bR^{d\times d})\times\cK_\bF(S,T;\bR^{d\times d})$. Then for any free term $\psi\in L^2_\bF(S,T;\bR^d)$, the generalized BSVIE
\begin{equation*}
	Y(t)=\psi(t)+\bE_t\Bigl[\int^T_tJ(s,t)^\top Y(s)\rd s\Bigr]+\sum^\infty_{n=1}\int_{\Delta_n(t,T)}k_n(s,t)^\top\cM_n[Y](s,t)\rd s,\ t\in(S,T),
\end{equation*}
has a unique solution $Y\in L^2_\bF(S,T;\bR^d)$. This solution is given by the variation of constants formula:
\begin{equation*}
	Y(t)=\psi(t)+\bE_t\Bigl[\int^T_tQ(s,t)^\top \psi(s)\rd s\Bigr]+\sum^\infty_{n=1}\int_{\Delta_n(t,T)}r_n(s,t)^\top\cM_n[\psi](s,t)\rd s,\ t\in(S,T),
\end{equation*}
where $r_n:=\fF_n[R]\in\cV_{n+1}(S,T;\bR^{d\times d})$ for $n\in\bN$. Here, the infinite sum in the right-hand side converges in $L^2_\bF(S,T;\bR^d)$.
\end{cor}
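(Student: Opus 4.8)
The plan is to read off \cref{bVCF_cor_VCF1} directly from \cref{bVCF_theo_VCF}, using the concrete integral representations of the two products: \cref{bast_defi_bastprod} for the backward $\ast$-product and \cref{bstar_prop_integral} for the backward $\star$-product. The only genuine content beyond bookkeeping is to verify that the zeroth Wiener--It\^o chaos component of the $\star$-resolvent part $R$ vanishes, so that the formula starts at $n=1$ as claimed.

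\emph{Step 1: identify the BSVIE with the algebraic equation.} I would first show that the generalized BSVIE in the statement is exactly $Y=\psi+J^\top\bast Y+K^\top\bstar Y$ on $L^2_\bF(S,T;\bR^d)$. Since transposition commutes with the Wiener--It\^o chaos expansion and preserves operator norms, $J\in\cJ_\bF(S,T;\bR^{d\times d})$ gives $J^\top\in\cJ_\bF(S,T;\bR^{d\times d})$, and by \cref{bast_defi_bastprod} the term $(J^\top\bast Y)(t)$ equals $\bE_t[\int^T_tJ(s,t)^\top Y(s)\rd s]$. Likewise $K=\sum^\infty_{n=1}\fW_n[k_n]$ gives $\fF_0[K]=0$ and $\fF_n[K]=k_n$ for $n\geq1$, hence $\fF_0[K^\top]=0$ and $\fF_n[K^\top]=\fF_n[K]^\top=k_n^\top$; feeding this into \cref{bstar_prop_integral} yields $(K^\top\bstar Y)(t)=\sum^\infty_{n=1}\int_{\Delta_n(t,T)}k_n(s,t)^\top\cM_n[Y](s,t)\rd s$, the series converging in $L^2_\bF(S,T;\bR^d)$. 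Thus the two equations coincide.

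\emph{Step 2: apply the abstract formula and translate back.} By \cref{bVCF_theo_VCF}, since $(J,K)$ admits a $(\ast,\star)$-resolvent $(Q,R)$, the algebraic equation has a unique solution $Y=\psi+Q^\top\bast\psi+R^\top\bstar\psi\in L^2_\bF(S,T;\bR^d)$. Applying \cref{bast_defi_bastprod} to $Q^\top$ and \cref{bstar_prop_integral} to $R^\top$, together with $\fF_n[R^\top]=\fF_n[R]^\top=r_n^\top$ where $r_n:=\fF_n[R]\in\cV_{n+1}(S,T;\bR^{d\times d})$ (membership holding because $R\in\cK_\bF$), this becomes
\[
Y(t)=\psi(t)+\bE_t\Bigl[\int^T_tQ(s,t)^\top\psi(s)\rd s\Bigr]+\fF_0[R](t)^\top\psi(t)+\sum^\infty_{n=1}\int_{\Delta_n(t,T)}r_n(s,t)^\top\cM_n[\psi](s,t)\rd s,
\]
with the infinite sum converging in $L^2_\bF(S,T;\bR^d)$ by \cref{bstar_prop_integral}. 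To match the stated formula it remains to check that the term $\fF_0[R](t)^\top\psi(t)$ is absent.

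\emph{Step 3: vanishing of $\fF_0[R]$ (the main, albeit mild, obstacle).} Projecting the resolvent equation $R=K+J\ast R+K\star R$ onto the zeroth chaos and using \cref{ast_lemm_chaos} and \cref{star_defi_starprod}, one gets $\fF_0[R]=\fF_0[K]+\bfF_0[J]\ast\fF_0[R]+\fF_0[K]\fF_0[R]$; since $\fF_0[K]=0$ this reduces to the homogeneous deterministic linear Volterra equation $\fF_0[R](t)=\int^t_S\bfF_0[J](t,s)\fF_0[R](s)\rd s$ with $L^2$ kernel $\bfF_0[J]$ (viewed as an element of $\cJ_\bF(S,T;\bR^{d\times d})$ since $\|\bfF_0[J]\|_{L^2(\Delta_2(S,T)),\op}\leq\knorm J\knorm_{\cJ_\bF(S,T)}<\infty$). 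Convolving with the $\ast$-resolvent of $\bfF_0[J]$, which exists by \cref{res_prop_astresexist}, and cancelling as in the proof of \cref{res_prop_unique}, forces $\fF_0[R]=0$. This yields the claimed variation of constants formula, completing the proof.
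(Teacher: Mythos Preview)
Your proof is correct and follows exactly the route the paper intends: the paper simply states that the corollary follows from \cref{bVCF_theo_VCF} together with the integral representation \cref{bstar_prop_integral}, without writing out any details. Your Steps 1 and 2 are precisely this translation. Your Step 3, verifying $\fF_0[R]=0$ from the resolvent equation $R=K+J\ast R+K\star R$ by projecting onto the zeroth chaos and solving the resulting homogeneous Volterra equation, is a genuine detail the paper leaves implicit; it is needed for the sum over $n$ to start at $1$, and your argument for it is sound.
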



\begin{cor}\label{bVCF_cor_VCF2}
Let deterministic kernels $j\in L^2(\Delta_2(S,T);\bR^{d\times d})$ and $k\in\cV_2(S,T;\bR^{d\times d})$ be fixed. Let $q\in L^2(\Delta_2(S,T);\bR^{d\times d})$ be the $\ast$-resolvent of $j$, and assume that $\fW_1[k+q\ast k]$ has a $\star$-resolvent. Then for any free term $\psi\in\ L^2_\bF(S,T;\bR^d)$, the Type-II BSVIE
\begin{equation*}
	Y(t)=\psi(t)+\bE_t\Bigl[\int^T_tj(s,t)^\top Y(s)\rd s\Bigr]+\int^T_tk(s,t)^\top\cM_1[Y](s,t)\rd s,\ t\in(S,T),
\end{equation*}
has a unique solution $Y\in L^2_\bF(S,T;\bR^d)$. This solution is given by the variation of constants formula:
\begin{align*}
	Y(t)&=\psi(t)+\sum^\infty_{n=1}\int_{\Delta_n(t,T)}(k+q\ast k)^{\triangleright n}(r,t)^\top\cM_n[\psi](r,t)\rd r\\
	&\hspace{0.3cm}+\int^T_tq(s,t)^\top\bE_t\Bigl[\psi(s)+\sum^\infty_{n=1}\int_{\Delta_n(s,T)}(k+q\ast k)^{\triangleright n}(r,s)^\top\cM_n[\psi](r,s)\rd r\Bigr]\rd s,\ t\in(S,T).
\end{align*}
\end{cor}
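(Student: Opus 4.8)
The plan is to mirror the proof of \cref{fVCF_cor_VCF2}, transporting it to the backward setting by means of the mixed-associativity identities in \cref{bast_prop_alg}. First I would identify the relevant $(\ast,\star)$-resolvent. Since $q$ is deterministic we have $\bfF_0[q]=q$ and $\bfF_n[q]=0$ for $n\geq1$, so a short computation with the chaos expansion (using \cref{ast_lemm_chaos} together with $\fF_1[\fW_1[k]]=k$ and $\fF_n[\fW_1[k]]=0$ otherwise) gives $q\ast\fW_1[k]=\fW_1[q\ast k]$, hence $\fW_1[k]+q\ast\fW_1[k]=\fW_1[k+q\ast k]$. By hypothesis this $\star$-Volterra kernel has a $\star$-resolvent, which by \cref{res_lemm_starresexplicit} is explicitly $R=\sum^\infty_{n=1}\fW_n[(k+q\ast k)^{\triangleright n}]$. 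Applying \cref{ast_prop_aststarres}~(i) with $Q_1=q$ (the $\ast$-resolvent of $j$) and $R_1=R$ then shows that $(Q,R)$, where $Q:=q+R\star q$, is the $(\ast,\star)$-resolvent of $(j,\fW_1[k])$.

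Next I would appeal to \cref{bVCF_theo_VCF}. The given Type-II BSVIE is precisely the algebraic equation $Y=\psi+j^\top\bast Y+\fW_1[k]^\top\bstar Y$: indeed, $\bE_t[\int^T_tj(s,t)^\top Y(s)\rd s]=(j^\top\bast Y)(t)$ by \cref{bast_defi_bastprod}, while $\int^T_tk(s,t)^\top\cM_1[Y](s,t)\rd s=(\fW_1[k]^\top\bstar Y)(t)$ by \cref{bstar_prop_integral}, since $\fF_0[\fW_1[k]^\top]=0$ and $\fF_1[\fW_1[k]^\top]=k^\top$. Hence the equation has the unique solution $Y=\psi+Q^\top\bast\psi+R^\top\bstar\psi$. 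It remains to unwind this expression. Writing $Q^\top=q^\top+(R\star q)^\top$ and invoking \eqref{bast_eq_JKxi} with $J:=q^\top$ and $K:=R^\top$ (so that $(K^\top\star J^\top)^\top=(R\star q)^\top$) yields $(R\star q)^\top\bast\psi=q^\top\bast(R^\top\bstar\psi)$, and therefore, by linearity of $\bast$, $Y=\psi+R^\top\bstar\psi+q^\top\bast(\psi+R^\top\bstar\psi)$.

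Finally I would make the two products explicit. By \cref{bstar_prop_integral}, together with $\fF_n[R^\top]=\fF_n[R]^\top=\bigl((k+q\ast k)^{\triangleright n}\bigr)^\top$ and $\fF_0[R]=0$, we obtain $(R^\top\bstar\psi)(t)=\sum^\infty_{n=1}\int_{\Delta_n(t,T)}(k+q\ast k)^{\triangleright n}(r,t)^\top\cM_n[\psi](r,t)\rd r$; and by \cref{bast_defi_bastprod}, $(q^\top\bast\eta)(t)=\bE_t[\int^T_tq(s,t)^\top\eta(s)\rd s]$ for any $\eta\in L^2_\bF(S,T;\bR^d)$. Substituting $\eta=\psi+R^\top\bstar\psi$ into the last display reproduces exactly the claimed formula. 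I do not expect a genuine obstacle here; the only delicate points are the bookkeeping of transposes (repeated use of $\fF_n[K^\top]=\fF_n[K]^\top$ and the correct instances of the identities in \cref{bast_prop_alg}) and confirming that the abstract equation coincides with the stated integral equation.
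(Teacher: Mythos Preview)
Your proposal is correct and follows essentially the same approach as the paper: identify the $(\ast,\star)$-resolvent $(Q,R)=(q+R\star q,\,\sum_{n\ge1}\fW_n[(k+q\ast k)^{\triangleright n}])$ via \cref{ast_prop_aststarres}~(i) and \cref{res_lemm_starresexplicit}, apply \cref{bVCF_theo_VCF}, and then use \eqref{bast_eq_JKxi} from \cref{bast_prop_alg} to rewrite $(R\star q)^\top\bast\psi=q^\top\bast(R^\top\bstar\psi)$ before making the products explicit through \cref{bstar_prop_integral} and the definition of $\bast$. The only cosmetic step you leave implicit is that, since $q$ is deterministic, $\bE_t\bigl[\int^T_tq(s,t)^\top\eta(s)\rd s\bigr]=\int^T_tq(s,t)^\top\bE_t[\eta(s)]\rd s$, which is needed to match the exact form stated in the corollary.
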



\begin{proof}
By the arguments in the proof of \cref{fVCF_cor_VCF2}, we see that $(Q,R)$ defined by $R=\sum^\infty_{n=1}\fW_n[(k+q\ast k)^{\triangleright n}]\in\cK_\bF(S,T;\bR^{d\times d})$ and $Q=q+R\star q\in\cJ_\bF(S,T;\bR^{d\times d})$ is the $(\ast,\star)$-resolvent of $(j,\fW_1[k])$. Thus, by \cref{bVCF_theo_VCF}, the BSVIE $Y=\psi+j^\top\bast Y+\fW_1[k]^\top\bstar Y$ has a unique solution $Y\in L^2_\bF(S,T;\bR^d)$, and the solution is given by
\begin{align*}
	Y&=\psi+Q^\top\bast\psi+R^\top\bstar\psi\\
	&=\psi+(q+R\star q)^\top\bast\psi+R^\top\bstar\psi\\
	&=\psi+q^\top\bast\psi+q^\top\bast(R^\top\bstar\psi)+R^\top\bstar\psi\\
	&=\psi+R^\top\bstar\psi+q^\top\bast(\psi+R^\top\bstar\psi).
\end{align*}
Here, we used \cref{bast_prop_alg}. Noting \cref{bstar_prop_integral}, we get the assertion.
\end{proof}

By the variation of constants formulae, together with the duality principles for the $\star$- and the $\ast$-products (see \cref{bstar_prop_duality} and \cref{bast_prop_duality}), we get the following duality principle between generalized SVIEs and generalized BSVIEs.


\begin{theo}\label{bVCF_theo_duality}
Let a $\ast$-Volterra kernel $J\in\cJ_\bF(S,T;\bR^{d\times d})$ and a $\star$-Volterra kernel $K\in\cK_\bF(S,T;\bR^{d\times d})$ be fixed, and suppose that $(J,K)$ has a $(\ast,\star)$-resolvent $(Q,R)\in\cJ_\bF(S,T;\bR^{d\times d})\times\cK_\bF(S,T;\bR^{d\times d})$. For each $\varphi,\psi\in L^2_\bF(S,T;\bR^d)$, let $X,Y\in L^2_\bF(S,T;\bR^d)$ be the solutions of the generalized SVIE
\begin{equation*}
	X=\varphi+J\ast X+K\star X
\end{equation*}
and the generalized BSVIE
\begin{equation*}
	Y=\psi+J^\top\bast Y+K^\top\bstar Y,
\end{equation*}
respectively. Then it holds that
\begin{equation*}
	\langle X,\psi\rangle_{L^2_\bF(S,T)}=\langle\varphi,Y\rangle_{L^2_\bF(S,T)}.
\end{equation*}
\end{theo}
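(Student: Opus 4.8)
The plan is to use the two variation of constants formulae together with the two duality principles already established. By \cref{fVCF_theo_VCF}, the solution of the forward equation is $X=\varphi+Q\ast\varphi+R\star\varphi$, and by \cref{bVCF_theo_VCF}, the solution of the backward equation is $Y=\psi+Q^\top\bast\psi+R^\top\bstar\psi$. So the identity to be proved becomes
\begin{equation*}
	\langle\varphi+Q\ast\varphi+R\star\varphi,\psi\rangle_{L^2_\bF(S,T)}=\langle\varphi,\psi+Q^\top\bast\psi+R^\top\bstar\psi\rangle_{L^2_\bF(S,T)}.
\end{equation*}

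First I would expand both sides by bilinearity of the inner product. The common term $\langle\varphi,\psi\rangle_{L^2_\bF(S,T)}$ cancels, so it suffices to match the remaining two terms on each side. For the $\ast$-part, \cref{bast_prop_duality} applied with the $\ast$-Volterra kernel $Q$ gives $\langle Q\ast\varphi,\psi\rangle_{L^2_\bF(S,T)}=\langle\varphi,Q^\top\bast\psi\rangle_{L^2_\bF(S,T)}$. For the $\star$-part, \cref{bstar_prop_duality} applied with the $\star$-Volterra kernel $R$ gives $\langle R\star\varphi,\psi\rangle_{L^2_\bF(S,T)}=\langle\varphi,R^\top\bstar\psi\rangle_{L^2_\bF(S,T)}$. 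Adding these two identities and adding back $\langle\varphi,\psi\rangle_{L^2_\bF(S,T)}$ to both sides yields exactly the claimed equality $\langle X,\psi\rangle_{L^2_\bF(S,T)}=\langle\varphi,Y\rangle_{L^2_\bF(S,T)}$.

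There is essentially no obstacle here: the substantive work has already been carried out in proving the two variation of constants formulae (which rely on the Banach algebra structure and the mixed associativity relations \cref{ast_prop_aststar}) and the two duality principles (which are Fubini/isometry computations). The only minor point to check is that $Q\in\cJ_\bF(S,T;\bR^{d\times d})$ and $R\in\cK_\bF(S,T;\bR^{d\times d})$, so that \cref{bast_prop_duality} and \cref{bstar_prop_duality} are indeed applicable to them — but this is guaranteed by the hypothesis that $(Q,R)$ is a $(\ast,\star)$-resolvent of $(J,K)$, hence lies in $\cJ_\bF(S,T;\bR^{d\times d})\times\cK_\bF(S,T;\bR^{d\times d})$ by definition. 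Thus the proof is a short three-line chain of equalities once the ingredients are assembled.
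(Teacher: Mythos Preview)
Your proposal is correct and follows exactly the same approach as the paper: substitute the variation of constants formulae from \cref{fVCF_theo_VCF} and \cref{bVCF_theo_VCF}, then apply the duality principles \cref{bast_prop_duality} and \cref{bstar_prop_duality} to $Q$ and $R$ respectively. The paper's proof is indeed the three-line chain of equalities you describe.
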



\begin{proof}
By \cref{fVCF_theo_VCF} and \cref{bVCF_theo_VCF}, $X$ and $Y$ are given by $X=\varphi+Q\ast\varphi+R\star\varphi$ and $Y=\psi+Q^\top\bast\psi+R^\top\bstar\psi$, respectively. Therefore, by \cref{bstar_prop_duality} and \cref{bast_prop_duality}, we get
\begin{align*}
	\langle X,\psi\rangle_{L^2_\bF(S,T)}=\langle\varphi+Q\ast\varphi+R\star\varphi,\psi\rangle_{L^2_\bF(S,T)}=\langle\varphi,\psi+Q^\top\bast\psi+R^\top\bstar\psi\rangle_{L^2_\bF(S,T)}=\langle\varphi,Y\rangle_{L^2_\bF(S,T)}.
\end{align*}
Thus, the assertion holds.
\end{proof}


\section{Existence of the $\star$-resolvent}\label{section_exist}

As we have seen in the above sections, the $(\ast,\star)$-resolvent plays crucial roles for solving SVIEs and BSVIEs. Recall that every $\ast$-Volterra kernel has a unique $\ast$-resolvent (see \cref{res_prop_astresexist}). Also, the $(\ast,\star)$-resolvent can be constructed by the $\ast$-resolvent and the $\star$-resolvent (see \cref{ast_prop_aststarres}). In this section, we focus on the existence of the $\star$-resolvent. We borrow some ideas from Section 9.3 of the textbook \cite{GrLoSt90}, where the general theory on deterministic Volterra kernels is discussed.

First, we show the following lemma.


\begin{lemm}\label{exist_lemm_<1}
Every $\star$-Volterra kernel $K\in\cK_\bF(S,T;\bR^{d\times d})$ with $\knorm K\knorm_{\cK_\bF(S,T)}<1$ has a $\star$-resolvent $R\in\cK_\bF(S,T;\bR^{d\times d})$. Furthermore, it holds that $R=\sum^\infty_{n=1}K^{\star n}$, where $K^{\star n}$ denotes the $(n-1)$-fold $\star$-product of $K$ by itself, and the infinite sum converges in $\cK_\bF(S,T;\bR^{d\times d})$.
\end{lemm}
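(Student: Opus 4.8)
The plan is to exploit the Banach algebra structure of $(\cK_\bF(S,T;\bR^{d\times d}),\knorm\cdot\knorm_{\cK_\bF(S,T)},\star)$ established in \cref{star_prop_Banachalg}, together with the standard Neumann-series argument for resolvents in a unital Banach algebra. Since $\knorm K\knorm_{\cK_\bF(S,T)}<1$, the submultiplicativity $\knorm K_1\star K_2\knorm_{\cK_\bF(S,T)}\leq\knorm K_1\knorm_{\cK_\bF(S,T)}\knorm K_2\knorm_{\cK_\bF(S,T)}$ from \cref{star_prop_Banachalg} gives $\knorm K^{\star n}\knorm_{\cK_\bF(S,T)}\leq\knorm K\knorm_{\cK_\bF(S,T)}^n$ for every $n\in\bN$ by induction, where $K^{\star n}$ denotes the $(n-1)$-fold $\star$-product of $K$ with itself (so $K^{\star 1}=K$). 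Hence $\sum^\infty_{n=1}\knorm K^{\star n}\knorm_{\cK_\bF(S,T)}\leq\sum^\infty_{n=1}\knorm K\knorm_{\cK_\bF(S,T)}^n=\frac{\knorm K\knorm_{\cK_\bF(S,T)}}{1-\knorm K\knorm_{\cK_\bF(S,T)}}<\infty$, and since $\cK_\bF(S,T;\bR^{d\times d})$ is a Banach space, the series $R:=\sum^\infty_{n=1}K^{\star n}$ converges absolutely in $\cK_\bF(S,T;\bR^{d\times d})$.

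Next I would verify that $R$ satisfies the resolvent equation. Using that $\star$ is continuous in each argument (bilinearity plus the norm bound) and associative, I compute
\begin{equation*}
	K+K\star R=K+K\star\Bigl(\sum^\infty_{n=1}K^{\star n}\Bigr)=K+\sum^\infty_{n=1}K\star K^{\star n}=K+\sum^\infty_{n=1}K^{\star(n+1)}=\sum^\infty_{n=1}K^{\star n}=R,
\end{equation*}
where moving $K\star(\cdot)$ inside the sum is justified by continuity of the left-multiplication operator $K_1\mapsto K\star K_1$ on $\cK_\bF(S,T;\bR^{d\times d})$. The identity $K+R\star K=R$ follows in exactly the same way using continuity of right-multiplication. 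Therefore $R$ is a $\star$-resolvent of $K$ in the sense of \cref{res_defi_res}(i), and it lies in $\cK_\bF(S,T;\bR^{d\times d})$ as required. Uniqueness, if one wishes to record it, is immediate from \cref{res_prop_unique}(i).

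There is essentially no main obstacle here: the statement is the $\star$-product analogue of the first (easy) case in the proof of \cref{res_prop_astresexist}, and the only things being used are the completeness of $\cK_\bF(S,T;\bR^{d\times d})$, the submultiplicativity and bilinearity/associativity of $\star$ (all from \cref{star_prop_Banachalg}), and the geometric series bound. The one point requiring a line of care is the interchange of the infinite sum with the $\star$-multiplication, which is handled by noting that $K_1\mapsto K\star K_1$ and $K_1\mapsto K_1\star K$ are bounded linear operators on the Banach space $\cK_\bF(S,T;\bR^{d\times d})$, hence continuous, so they commute with the convergent series. Everything else is routine bookkeeping with the index shift $K\star K^{\star n}=K^{\star(n+1)}$.
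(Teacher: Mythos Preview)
Your proof is correct and follows essentially the same approach as the paper: both use the submultiplicativity from \cref{star_prop_Banachalg} to get $\knorm K^{\star n}\knorm_{\cK_\bF(S,T)}\leq\knorm K\knorm_{\cK_\bF(S,T)}^n$, define $R$ as the convergent Neumann series $\sum^\infty_{n=1}K^{\star n}$, and verify the resolvent equations by pulling the $\star$-product through the sum. Your write-up is slightly more explicit about why the interchange of $\star$ with the infinite sum is justified, but otherwise the arguments coincide.
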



\begin{proof}
By \cref{star_prop_Banachalg}, we have $\knorm K^{\star n}\knorm_{\cK_\bF(S,T)}\leq\knorm K\knorm^n_{\cK_\bF(S,T)}$ for any $n\in\bN$. Thus, by the assumption, the infinite sum $R:=\sum^\infty_{n=1}K^{\star n}$ converges in $\cK_\bF(S,T;\bR^{d\times d})$. Observe that
\begin{equation*}
	K+K\star R=K+K\star\Bigl(\sum^\infty_{n=1}K^{\star n}\Bigr)=K+\sum^\infty_{n=1}K\star K^{\star n}=\sum^\infty_{n=1}K^{\star n}=R.
\end{equation*}
Similarly, we have $K+R\star K=R$. Thus, $R$ is the $\star$-resolvent of $K$.
\end{proof}

The above lemma is not so useful by its own right for the existence of the $\star$-resolvent on the whole interval $(S,T)$ with arbitrary length. In order to ensure the existence of the $\star$-resolvent in more general settings, we introduce the notion of the restriction and the concatenation of $\star$-Volterra kernels with respect to subintervals of $(S,T)$.


\begin{defi}\label{exist_defi_restrict}
For each $\star$-Volterra kernel $K\in\cK_\bF(S,T;\bR^{d\times d})$ and a subinterval $(U,V)$ of $(S,T)$, we define the \emph{restricted $\star$-Volterra kernel} $K_{(U,V)}\in\cK_\bF(U,V;\bR^{d\times d})$ of $K$ on $(U,V)$ by the Wiener--It\^{o} chaos expansion
\begin{equation*}
	\fF_n[K_{(U,V)}]:=\fF_n[K]|_{\Delta_{n+1}(U,V)},\ n\in\bN_0,
\end{equation*}
where, for each function $f:\Delta_{n+1}(S,T)\to\bR^{d\times d}$ with $n\in\bN_0$, the function $f|_{\Delta_{n+1}(U,V)}:\Delta_{n+1}(U,V)\to\bR^{d\times d}$ is the restriction of $f$ on $\Delta_{n+1}(U,V)$.
\end{defi}


\begin{rem}
\begin{itemize}
\item[(i)]
Note that it does not hold $K_{(U,V)}(t)=K(t)$ for $t\in(U,V)$ in general.
\item[(ii)]
Let $0\leq S\leq U<V\leq T<\infty$. Suppose that a $\star$-Volterra kernel $K\in\cK_\bF(S,T;\bR^{d\times d})$ has a $\star$-resolvent $R\in\cK_\bF(S,T;\bR^{d\times d})$ on $(S,T)$. By the definition, it is easy to see that the restricted $\star$-Volterra kernel $K_{(U,V)}\in\cK_\bF(U,V;\bR^{d\times d})$ has a $\star$-resolvent on $(U,V)$, which is given by the restricted $\star$-Volterra kernel $R_{(U,V)}\in\cK_\bF(U,V;\bR^{d\times d})$ of $R$.
\end{itemize}
\end{rem}

We prove the opposite direction of (ii) in the above remark. In other words, we provide a (non-trivial) concatenation procedure for the $\star$-resolvent. The following is the main result of this section.


\begin{theo}\label{exist_theo_concate}
Let $K\in\cK_\bF(S,T;\bR^{d\times d})$ be a $\star$-Volterra kernel on $(S,T)$ with $0\leq S<T<\infty$. Suppose that there exists $\{U_k\}^m_{k=0}$ with $m\in\bN$ such that $S=U_0<U_1<\mathalpha{\cdots}<U_m=T$ and that, for every $k\in\{0,1,\mathalpha{\dots},m-1\}$, the restricted $\star$-Volterra kernel $K_{(U_k,U_{k+1})}\in\cK_\bF(U_k,U_{k+1};\bR^{d\times d})$ has a $\star$-resolvent on $(U_k,U_{k+1})$. Then $K$ has a $\star$-resolvent on $(S,T)$.
\end{theo}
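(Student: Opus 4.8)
The plan is to reduce to the two‑interval case and then concatenate $\star$-resolvents explicitly. First I would record that restriction is transitive: for subintervals $(U,V)\subseteq(U',V')\subseteq(S,T)$ one has $(K_{(U',V')})_{(U,V)}=K_{(U,V)}$, directly from \cref{exist_defi_restrict}. So, arguing by induction on $m$, it suffices to prove the case $m=2$: if $0\le S<U<T<\infty$, $K\in\cK_\bF(S,T;\bR^{d\times d})$, $R_1$ is a $\star$-resolvent of $K_{(S,U)}$ on $(S,U)$ and $R_2$ is a $\star$-resolvent of $K_{(U,T)}$ on $(U,T)$, then $K$ has a $\star$-resolvent on $(S,T)$. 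For general $m$ one applies the inductive hypothesis to the partition $\{U_0<\dots<U_{m-1}\}$ (each $K_{(U_k,U_{k+1})}$, $k\le m-2$, still has a $\star$-resolvent by transitivity) to obtain a $\star$-resolvent of $K_{(S,U_{m-1})}$, and then invokes the case $m=2$ with split point $U_{m-1}$.

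For $m=2$, the key device is to split $K$ into three ``corner'' pieces via two idempotents. I would introduce $E^{<},E^{>}\in\cK_\bF(S,T;\bR^{d\times d})$ with $\fF_0[E^{<}]:=\1_{(S,U)}I_d$, $\fF_0[E^{>}]:=\1_{(U,T)}I_d$, and $\fF_n[E^{<}]=\fF_n[E^{>}]=0$ for $n\ge1$ (these lie in $\cK_\bF(S,T;\bR^{d\times d})$ with norm $1$). From \cref{star_defi_starprod} one checks $E^{<}+E^{>}=I_d$, $E^{<}\star E^{<}=E^{<}$, $E^{>}\star E^{>}=E^{>}$, $E^{<}\star E^{>}=E^{>}\star E^{<}=0$, and, for any $L\in\cK_\bF(S,T;\bR^{d\times d})$ and $n\in\bN_0$,
\begin{equation*}
	\fF_n[E^{<}\star L\star E^{<}]=\1_{\{t_0<U\}}\fF_n[L],\qquad \fF_n[E^{>}\star L\star E^{>}]=\1_{\{t_n>U\}}\fF_n[L],\qquad \fF_n[E^{>}\star L\star E^{<}]=\1_{\{t_0>U>t_n\}}\fF_n[L],
\end{equation*}
while $E^{<}\star L\star E^{>}=0$ since $t_0>t_n$ on $\Delta_{n+1}(S,T)$. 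Writing $L^{<}:=E^{<}\star L\star E^{<}$, $L^{\times}:=E^{>}\star L\star E^{<}$, $L^{>}:=E^{>}\star L\star E^{>}$ gives $L=L^{<}+L^{\times}+L^{>}$; applied to $L=K$ this yields $K=K^{<}+K^{\times}+K^{>}$, each piece fixed by the appropriate one‑sided $\star$-multiplications by $E^{<},E^{>}$ (``corner form'').

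Next I would assemble the resolvent. Let $R^{<}$ be the zero‑extension of $R_1$ (i.e. $\fF_n[R^{<}]:=\fF_n[R_1]$ on $\Delta_{n+1}(S,U)$ and $:=0$ otherwise); since $\|\fF_n[R^{<}]\|_{\cV_{n+1}(S,T)}\le\|\fF_n[R_1]\|_{\cV_{n+1}(S,U)}$ we get $R^{<}\in\cK_\bF(S,T;\bR^{d\times d})$, and because $\triangleright$ only reads off pointwise values at the joining time, a chaos‑level computation gives $R^{<}=K^{<}+K^{<}\star R^{<}=K^{<}+R^{<}\star K^{<}$, so $R^{<}$ is the $\star$-resolvent of $K^{<}$ in $\cK_\bF(S,T;\bR^{d\times d})$; similarly the zero‑extension $R^{>}$ of $R_2$ is the $\star$-resolvent of $K^{>}$, and both have corner form. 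Now set $\Phi:=K^{\times}+K^{\times}\star R^{<}$, $R^{\times}:=\Phi+R^{>}\star\Phi$, $R:=R^{<}+R^{\times}+R^{>}$, all in $\cK_\bF(S,T;\bR^{d\times d})$ (one checks $\Phi$, hence $R^{\times}$, has corner form $E^{>}\star(\cdot)\star E^{<}$). Using $E^{<}\star E^{>}=E^{>}\star E^{<}=0$ with the corner forms, among the nine products $K^{\alpha}\star R^{\beta}$ ($\alpha,\beta\in\{<,\times,>\}$) only $K^{<}\star R^{<}$, $K^{>}\star R^{>}$, $K^{>}\star R^{\times}$, $K^{\times}\star R^{<}$ can be nonzero, whence
\begin{equation*}
	K+K\star R=(K^{<}+K^{<}\star R^{<})+(K^{>}+K^{>}\star R^{>})+(\Phi+K^{>}\star R^{\times}).
\end{equation*}
Since $K^{>}\star R^{>}=R^{>}-K^{>}$, one has $K^{>}\star R^{\times}=K^{>}\star\Phi+(R^{>}-K^{>})\star\Phi=R^{>}\star\Phi$, so the three brackets equal $R^{<},R^{>},R^{\times}$ and $K+K\star R=R$. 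The companion identity $K+R\star K=R$ follows by the same bookkeeping, now using $R^{\times}\star K^{<}=\Phi\star K^{<}+R^{>}\star\Phi\star K^{<}$ together with $\Phi\star K^{<}=K^{\times}\star(K^{<}+R^{<}\star K^{<})=K^{\times}\star R^{<}$. Hence $R$ is a $\star$-resolvent of $K$ on $(S,T)$, which closes the induction.

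I expect the main obstacle to be the middle step: verifying that the zero‑extensions $R^{<},R^{>}$ really are $\star$-resolvents of $K^{<},K^{>}$ inside the larger algebra $\cK_\bF(S,T;\bR^{d\times d})$ (this rests on the locality of $\triangleright$ and the compatibility of restriction with $\star$), and the support bookkeeping that forces the closed forms for $\Phi$ and $R^{\times}$ to satisfy both resolvent equations. Everything else is formal manipulation in the associative Banach algebra $(\cK_\bF(S,T;\bR^{d\times d}),\star)$ from \cref{star_prop_Banachalg}.
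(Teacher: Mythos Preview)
Your proof is correct, and it ultimately constructs exactly the same object as the paper: unwinding your definition gives
\begin{equation*}
R=R^{<}+R^{>}+K^{\times}+K^{\times}\star R^{<}+R^{>}\star K^{\times}+R^{>}\star K^{\times}\star R^{<},
\end{equation*}
which, after using $R^{<}=K^{<}+K^{<}\star R^{<}$, $R^{>}=K^{>}+R^{>}\star K^{>}$, $K^{>}\star R^{<}=R^{>}\star K^{<}=0$, and $K=K^{<}+K^{\times}+K^{>}$, coincides with the paper's formula $R=K+K\star R_1+R_2\star K+R_2\star K\star R_1$ (your $R^{<},R^{>}$ are the paper's $R_1,R_2$). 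Both proofs reduce to $m=2$, zero-extend the local resolvents, check that the extensions are $\star$-resolvents of the ``diagonal blocks'' $K^{<},K^{>}$ in the big algebra, and then verify the two resolvent identities.

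The organizational device is different, though, and worth noting. The paper works directly at the chaos level: it proves the intermediate identities $R=K+K\star R_1+K_2\star R=K+R_2\star K+R\star K_1$, then computes $\fF_n[R]$ on $\Delta_{n+1}(S,U)$ and $\Delta_{n+1}(U,T)$ separately, and finally shows $K\star R_1+K_2\star R=K\star R$ by splitting indicator functions on $\Delta_{n+1}(S,T)$. You instead encode all of that support bookkeeping once and for all in the idempotents $E^{<},E^{>}$ and the ``block'' decomposition $L=L^{<}+L^{\times}+L^{>}$; after that, the verification of both resolvent equations is pure Banach-algebra manipulation with no further reference to chaos coefficients. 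Your route is shorter and more conceptual (it is essentially the $2\times 2$ block-triangular resolvent computation in the Peirce decomposition of an associative algebra), while the paper's route makes the chaos structure of $R$ on each subinterval explicit, which is closer in spirit to the deterministic Volterra argument it cites.
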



\begin{proof}
It suffices to show the theorem when $m=2$. Let $S<U<T$ be fixed, and assume that $K_{(S,U)}\in\cK_\bF(S,U;\bR^{d\times d})$ (resp.\ $K_{(U,T)}\in\cK_\bF(U,T;\bR^{d\times d})$) has a $\star$-resolvent $R_{(S,U)}\in\cK_\bF(S,U;\bR^{d\times d})$ on $(S,U)$ (resp.\ $R_{(U,T)}\in\cK_\bF(U,T;\bR^{d\times d})$ on $(U,T)$). Define $\star$-Volterra kernels $K_i,R_i\in\cK_\bF(S,T;\bR^{d\times d})$, $i=1,2$, on the whole interval $(S,T)$ by the Wiener--It\^{o} chaos expansions
\begin{align*}
	&\fF_n[K_1]:=\fF_n[K_{(S,U)}]\ \text{on}\ \Delta_{n+1}(S,U),\ \fF_n[K_1]:=0\ \text{on}\ \Delta_{n+1}(S,T)\setminus\Delta_{n+1}(S,U),\ n\in\bN_0,\\
	&\fF_n[K_2]:=\fF_n[K_{(U,T)}]\ \text{on}\ \Delta_{n+1}(U,T),\ \fF_n[K_2]:=0\ \text{on}\ \Delta_{n+1}(S,T)\setminus\Delta_{n+1}(U,T),\ n\in\bN_0,\\
	&\fF_n[R_1]:=\fF_n[R_{(S,U)}]\ \text{on}\ \Delta_{n+1}(S,U),\ \fF_n[R_1]:=0\ \text{on}\ \Delta_{n+1}(S,T)\setminus\Delta_{n+1}(S,U),\ n\in\bN_0,\\
	&\fF_n[R_2]:=\fF_n[R_{(U,T)}]\ \text{on}\ \Delta_{n+1}(U,T),\ \fF_n[R_2]:=0\ \text{on}\ \Delta_{n+1}(S,T)\setminus\Delta_{n+1}(U,T),\ n\in\bN_0.
\end{align*}
Define $R\in\cK_\bF(S,T;\bR^{d\times d})$ by
\begin{equation*}
	R:=K+K\star R_1+R_2\star K+R_2\star K\star R_1.
\end{equation*}
We will show that $R$ is the $\star$-resolvent of $K$ on $(S,T)$.

First, we show that $R_i$ is the $\star$-resolvent of $K_i$ on $(S,T)$ for $i=1,2$. Observe that, on $\Delta_{n+1}(S,T)$ with $n\in\bN_0$,
\begin{align*}
	\fF_n[K_1\star R_1]&=\sum^n_{k=0}\fF_{n-k}[K_1]\triangleright\fF_k[R_1]=\Bigl(\sum^n_{k=0}\fF_{n-k}[K_{(S,U)}]\triangleright\fF_k[R_{(S,U)}]\Bigr)\1_{\Delta_{n+1}(S,U)}\\
	&=\fF_n[K_{(S,U)}\star R_{(S,U)}]\1_{\Delta_{n+1}(S,U)}=\fF_n[R_{(S,U)}-K_{(S,U)}]\1_{\Delta_{n+1}(S,U)}\\
	&=\fF_n[R_1-K_1],
\end{align*}
and hence $R_1=K_1+K_1\star R_1$ on $(S,T)$. Similarly, we can show that $R_1=K_1+R_1\star K_1$ and $R_2=K_2+K_2\star R_2=K_2+R_2\star K_2$ on $(S,T)$. Thus, $R_i$ is the $\star$-resolvent of $K_i$ on $(S,T)$ for $i=1,2$.

Second, we show the following claim:
\begin{equation}\label{exist_eq_concateclaim1}
	R=K+K\star R_1+K_2\star R=K+R_2\star K+R\star K_1.
\end{equation}
Since $R_2$ is the $\star$-resolvent of $K_2$, we have
\begin{align*}
	K_2\star R&=K_2\star K+K_2\star K\star R_1+\underbrace{K_2\star R_2}_{=R_2-K_2}\star K+\underbrace{K_2\star R_2}_{=R_2-K_2}\star K\star R_1\\
	&=R_2\star K+R_2\star K\star R_1.
\end{align*}
By inserting this formula into the definition of $R$, we get the first equality in \eqref{exist_eq_concateclaim1}. On the other hand, since $R_1$ is the $\star$-resolvent of $K_1$, we have
\begin{align*}
	R\star K_1&=K\star K_1+K\star \underbrace{R_1\star K_1}_{=R_1-K_1}+R_2\star K\star K_1+R_2\star K\star \underbrace{R_1\star K_1}_{=R_1-K_1}\\
	&=K\star R_1+R_2\star K\star R_1.
\end{align*}
By inserting this formula into the definition of $R$, we get the second equality in \eqref{exist_eq_concateclaim1}.

Third, let us show the following claim:
\begin{equation}\label{exist_eq_concateclaim2}
\begin{split}
	&\fF_n[K]=\fF_n[K_1]\ \text{on}\ \Delta_{n+1}(S,U)\ \text{and}\ \fF_n[K]=\fF_n[K_2]\ \text{on}\ \Delta_{n+1}(U,T),\\
	&\fF_n[R]=\fF_n[R_1]\ \text{on}\ \Delta_{n+1}(S,U)\ \text{and}\ \fF_n[R]=\fF_n[R_2]\ \text{on}\ \Delta_{n+1}(U,T),\\
\end{split}
\end{equation}
for each $n\in\bN_0$. The equalities for $K$ are trivial from the definition. We show the equalities for $R$. Let $n\in\bN_0$. Observe that, on $\Delta_{n+1}(S,U)$,
\begin{align*}
	&\fF_n[K]=\fF_n[K_1],\\
	&\fF_n[K\star R_1]=\sum^n_{k=0}\fF_{n-k}[K]\triangleright\fF_k[R_1]=\sum^n_{k=0}\fF_{n-k}[K_1]\triangleright\fF_k[R_1]=\fF_n[K_1\star R_1],\\
	&\fF_n[R_2\star K]=\sum^n_{k=0}\fF_{n-k}[R_2]\triangleright\fF_k[K]=0,\\
	&\fF_n[R_2\star K\star R_1]=\sum^n_{k=0}\fF_{n-k}[R_2]\triangleright\fF_k[K\star R_1]=0,
\end{align*}
and hence
\begin{equation*}
	\fF_n[R]=\fF_n[K_1]+\fF_n[K_1\star R_1]=\fF_n[K_1+K_1\star R_1]=\fF_n[R_1].
\end{equation*}
On the other hand, on $\Delta_{n+1}(U,T)$,
\begin{align*}
	&\fF_n[K]=\fF_n[K_2],\\
	&\fF_n[K\star R_1]=\sum^n_{k=0}\fF_{n-k}[K]\triangleright\fF_k[R_1]=0,\\
	&\fF_n[R_2\star K]=\sum^n_{k=0}\fF_{n-k}[R_2]\triangleright\fF_k[K]=\sum^n_{k=0}\fF_{n-k}[R_2]\triangleright\fF_k[K_2]=\fF_n[R_2\star K_2],\\
	&\fF_n[R_2\star K\star R_1]=\sum^n_{k=0}\fF_{n-k}[R_2\star K]\triangleright\fF_k[R_1]=0,
\end{align*}
and hence
\begin{equation*}
	\fF_n[R]=\fF_n[K_2]+\fF_n[R_2\star K_2]=\fF_n[K_2+R_2\star K_2]=\fF_n[R_2].
\end{equation*}
Thus, the claim \eqref{exist_eq_concateclaim2} holds.

Lastly, we show the following claim:
\begin{equation}\label{exist_eq_concateclaim3}
	K\star R_1+K_2\star R=K\star R\ \text{and}\ R_2\star K+R\star K_1=R\star K.
\end{equation}
By using \eqref{exist_eq_concateclaim2}, for each $n\in\bN_0$, we have
\begin{align*}
	\fF_n[K\star R_1]&=\sum^n_{k=0}\fF_{n-k}[K]\triangleright\fF_k[R_1]=\sum^n_{k=0}\fF_{n-k}[K]\triangleright(\fF_k[R_1]\1_{\Delta_{k+1}(S,U)})\\
	&=\sum^n_{k=0}\fF_{n-k}[K]\triangleright(\fF_k[R]\1_{\Delta_{k+1}(S,U)})
\end{align*}
and, for a.e.\ on $\Delta_{n+1}(S,T)$,
\begin{align*}
	\fF_n[K_2\star R]&=\sum^n_{k=0}\fF_{n-k}[K_2]\triangleright\fF_k[R]=\sum^n_{k=0}(\fF_{n-k}[K_2]\1_{\Delta_{n-k+1}(U,T)})\triangleright\fF_k[R]\\
	&=\sum^n_{k=0}(\fF_{n-k}[K]\1_{\Delta_{n-k+1}(U,T)})\triangleright\fF_k[R]\\
	&=\sum^n_{k=0}\fF_{n-k}[K]\triangleright(\fF_k[R]\1_{\Delta_{k+1}(S,T)\setminus\Delta_{k+1}(S,U)}).
\end{align*}
Note that the last equality holds except on the set $\bigcup^n_{k=0}\{(t_0,t_1,\mathalpha{\dots},t_n)\in\Delta_{n+1}(S,T)\,|\,t_k=U\}$ whose Lebesgue measure is zero on $\Delta_{n+1}(S,T)$. Thus, we get
\begin{equation*}
	\fF_n[K\star R_1+K_2\star R]=\sum^n_{k=0}\fF_{n-k}[K]\triangleright\fF_k[R]=\fF_n[K\star R]\ \text{a.e.\ on $\Delta_{n+1}(S,T)$}
\end{equation*}
for each $n\in\bN_0$. Therefore, the first equality in \eqref{exist_eq_concateclaim3} holds. Similarly, for each $n\in\bN_0$, we have
\begin{align*}
	\fF_n[R_2\star K]&=\sum^n_{k=0}\fF_{n-k}[R_2]\triangleright\fF_k[K]=\sum^n_{k=0}(\fF_{n-k}[R_2]\1_{\Delta_{n-k+1}(U,T)})\triangleright\fF_k[K]\\
	&=\sum^n_{k=0}(\fF_{n-k}[R]\1_{\Delta_{n-k+1}(U,T)})\triangleright\fF_k[K]
\end{align*}
and, for a.e.\ on $\Delta_{n+1}(S,T)$,
\begin{align*}
	\fF_n[R\star K_1]&=\sum^n_{k=0}\fF_{n-k}[R]\triangleright\fF_k[K_1]=\sum^n_{k=0}\fF_{n-k}[R]\triangleright(\fF_k[K_1]\1_{\Delta_{k+1}(S,U)})\\
	&=\sum^n_{k=0}\fF_{n-k}[R]\triangleright(\fF_k[K]\1_{\Delta_{k+1}(S,U)})\\
	&=\sum^n_{k=0}(\fF_{n-k}[R]\1_{\Delta_{n-k+1}(S,T)\setminus\Delta_{n-k+1}(U,T)})\triangleright\fF_k[K].
\end{align*}
Thus, we get
\begin{equation*}
	\fF_n[R_2\star K+R\star K_1]=\sum^n_{k=0}\fF_{n-k}[R]\triangleright\fF_k[K]=\fF_n[R\star K]\ \text{a.e.\ on $\Delta_{n+1}(S,T)$}
\end{equation*}
for each $n\in\bN_0$. Therefore, the second equality in \eqref{exist_eq_concateclaim3} holds.

By \eqref{exist_eq_concateclaim1} and \eqref{exist_eq_concateclaim3}, we get $R=K+K\star R=K+R\star K$, which implies that $R$ is the $\star$-resolvent of $K$ on $(S,T)$. This completes the proof.
\end{proof}

By \cref{exist_lemm_<1} and \cref{exist_theo_concate}, we get the following important corollaries.


\begin{cor}\label{exist_cor_concate}
Let $K\in\cK_\bF(S,T;\bR^{d\times d})$ be a $\star$-Volterra kernel. Suppose that there exists $\{U_k\}^m_{k=0}$ with $m\in\bN$ such that $S=U_0<U_1<\mathalpha{\cdots}<U_m=T$ and that, for every $k\in\{0,1,\mathalpha{\dots},m-1\}$, the restricted $\star$-Volterra kernel $K_{(U_k,U_{k+1})}\in\cK_\bF(U_k,U_{k+1};\bR^{d\times d})$ satisfies $\knorm K_{(U_k,U_{k+1})}\knorm_{\cK_\bF(U_k,U_{k+1})}<1$. Then $K$ has a $\star$-resolvent $R\in\cK_\bF(S,T;\bR^{d\times d})$ on $(S,T)$.
\end{cor}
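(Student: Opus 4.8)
The plan is to combine the two immediately preceding results: \cref{exist_lemm_<1} gives the local existence of $\star$-resolvents under a smallness condition, and \cref{exist_theo_concate} upgrades local existence on the pieces of a partition to global existence. So the argument should be essentially a two-line deduction, with no new estimate required.

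First I would fix the partition $\{U_k\}^m_{k=0}$ with $S=U_0<U_1<\cdots<U_m=T$ provided by the hypothesis. For each $k\in\{0,1,\dots,m-1\}$, the restricted $\star$-Volterra kernel $K_{(U_k,U_{k+1})}\in\cK_\bF(U_k,U_{k+1};\bR^{d\times d})$ satisfies $\knorm K_{(U_k,U_{k+1})}\knorm_{\cK_\bF(U_k,U_{k+1})}<1$ by assumption. Hence \cref{exist_lemm_<1}, applied on the interval $(U_k,U_{k+1})$ in place of $(S,T)$, yields a $\star$-resolvent $R_{(U_k,U_{k+1})}\in\cK_\bF(U_k,U_{k+1};\bR^{d\times d})$ of $K_{(U_k,U_{k+1})}$ on $(U_k,U_{k+1})$, namely $R_{(U_k,U_{k+1})}=\sum^\infty_{n=1}K_{(U_k,U_{k+1})}^{\star n}$.

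Then I would invoke \cref{exist_theo_concate} directly with the same partition $\{U_k\}^m_{k=0}$: its hypothesis is precisely that each $K_{(U_k,U_{k+1})}$ has a $\star$-resolvent on $(U_k,U_{k+1})$, which we have just verified. The conclusion of \cref{exist_theo_concate} is that $K$ has a $\star$-resolvent $R\in\cK_\bF(S,T;\bR^{d\times d})$ on $(S,T)$, which is exactly the claim. This completes the proof.

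There is no real obstacle here; the only point worth stressing is that the hypotheses of \cref{exist_theo_concate} are matched \emph{verbatim} after the application of \cref{exist_lemm_<1}, so the corollary is a clean composition of the two statements. (If one wished, one could additionally record the explicit formula for $R$ obtained by iterating the two-interval concatenation formula $R=K+K\star R_1+R_2\star K+R_2\star K\star R_1$ from the proof of \cref{exist_theo_concate}, but this is not needed for the statement.)
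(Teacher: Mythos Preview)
Your proposal is correct and matches the paper's own approach exactly: the paper simply states that the corollary follows by combining \cref{exist_lemm_<1} and \cref{exist_theo_concate}, which is precisely the two-step argument you give. No additional work is needed.
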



\begin{cor}\label{exist_cor_concateestimate}
Let $K\in\cK_\bF(S,T;\bR^{d\times d})$ be a $\star$-Volterra kernel. Assume that
\begin{equation*}
	\underset{t\in(S,T)}{\mathrm{ess\,sup}}|\fF_0[K](t)|_\op<1
\end{equation*}
and that there exists a nonnegative function $a\in L^2(0,T-S;\bR)$ satisfying
\begin{equation*}
	|\fF_n[K](t_0,t_1,\mathalpha{\dots},t_n)|_\op\leq\prod^n_{i=1}a(t_{i-1}-t_i),\ (t_0,t_1,\mathalpha{\dots},t_n)\in\Delta_{n+1}(S,T),
\end{equation*}
for any $n\in\bN$. Then $K$ has a $\star$-resolvent $R\in\cK_\bF(S,T;\bR^{d\times d})$ on $(S,T)$.
\end{cor}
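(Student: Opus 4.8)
The plan is to deduce this from \cref{exist_cor_concate}, which requires only a finite partition $S=U_0<U_1<\cdots<U_m=T$ such that, on each subinterval, the restricted $\star$-Volterra kernel has $\cK_\bF$-norm strictly less than $1$. So the goal is to estimate $\knorm K_{(U,V)}\knorm_{\cK_\bF(U,V)}$ for an arbitrary short subinterval $(U,V)\subseteq(S,T)$, show it is $<1$ once $V-U$ is small, and then cover $(S,T)$ by finitely many such subintervals.

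To carry this out, I would first recall from \cref{exist_defi_restrict} that $\fF_n[K_{(U,V)}]$ is just $\fF_n[K]$ restricted to $\Delta_{n+1}(U,V)$, so the two hypotheses pass to the subinterval unchanged: $\underset{t\in(U,V)}{\mathrm{ess\,sup}}|\fF_0[K](t)|_\op\leq c_0:=\underset{t\in(S,T)}{\mathrm{ess\,sup}}|\fF_0[K](t)|_\op<1$, and for $n\geq1$ the bound $|\fF_n[K](t_0,t_1,\mathalpha{\dots},t_n)|_\op\leq\prod^n_{i=1}a(t_{i-1}-t_i)$ holds a.e.\ on $\Delta_{n+1}(U,V)$. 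The latter is exactly the hypothesis of \cref{detkernel_lemm_kernel}(ii) on the interval $(U,V)$, with dominating function $(s_1,\mathalpha{\dots},s_n)\mapsto\prod^n_{i=1}a(s_i)$, which lies in $L^2((0,V-U)^n;\bR)$ with norm $\bigl(\int^{V-U}_0 a(s)^2\rd s\bigr)^{n/2}$. Writing $g(\delta):=\int^\delta_0 a(s)^2\rd s$ and $\rho:=g(V-U)^{1/2}$, this gives $\knorm\fF_0[K_{(U,V)}]\knorm_{\cV_1(U,V)}\leq c_0$ and $\knorm\fF_n[K_{(U,V)}]\knorm_{\cV_{n+1}(U,V)}\leq\rho^n$ for $n\geq1$, hence, summing the definition of $\knorm\cdot\knorm_{\cK_\bF(U,V)}$ and using $\rho<1$, one obtains $\knorm K_{(U,V)}\knorm_{\cK_\bF(U,V)}\leq c_0+\rho/(1-\rho)$.

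The only delicate point — the ``main obstacle'', such as it is — is that shrinking $V-U$ makes $\rho$ small (since $a^2\in L^1(0,T-S)$, so $g$ is continuous and $g(0)=0$), but it does nothing to the $n=0$ term, which is controlled only by $c_0<1$ with no prescribed margin. The fix is to pick $\delta_0>0$ with $g(\delta_0)^{1/2}<\tfrac{1-c_0}{2-c_0}$; then on any subinterval of length $\leq\delta_0$ one has $\rho<\tfrac{1-c_0}{2-c_0}$, hence $1-\rho>\tfrac{1}{2-c_0}$ and $\rho/(1-\rho)<1-c_0$, so that $\knorm K_{(U,V)}\knorm_{\cK_\bF(U,V)}<c_0+(1-c_0)=1$. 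Finally I would take $m\in\bN$ with $m\geq(T-S)/\delta_0$ and $U_k:=S+k(T-S)/m$, so that each $U_{k+1}-U_k\leq\delta_0$ and thus $\knorm K_{(U_k,U_{k+1})}\knorm_{\cK_\bF(U_k,U_{k+1})}<1$; \cref{exist_cor_concate} then produces the desired $\star$-resolvent $R\in\cK_\bF(S,T;\bR^{d\times d})$ of $K$ on $(S,T)$.
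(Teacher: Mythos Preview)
Your proposal is correct and follows essentially the same route as the paper: both estimate $\knorm K_{(U,V)}\knorm_{\cK_\bF(U,V)}\leq c_0+\rho/(1-\rho)$ via \cref{detkernel_lemm_kernel}(ii) with the product dominating function, choose the subinterval length small enough that this quantity is $<1$, and then invoke \cref{exist_cor_concate}. The only cosmetic difference is that the paper fixes the threshold abstractly by picking $\ep\in(0,1)$ with $c_0+\ep/(1-\ep)<1$ and then partitions so that $\|a\|_{L^2(0,U_{k+1}-U_k)}\leq\ep$, whereas you compute the explicit bound $\rho<(1-c_0)/(2-c_0)$ and use a uniform partition; the content is identical.
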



\begin{proof}
We denote $\delta:=\underset{t\in(S,T)}{\mathrm{ess\,sup}}|\fF_0[K](t)|_\op<1$. Let $\ep\in(0,1)$ be a number such that $\delta+\frac{\ep}{1-\ep}<1$. Then there exists $\{U_k\}^m_{k=0}$ with $m\in\bN$ such that $S=U_0<U_1<\mathalpha{\cdots}<U_m=T$ and that $\|a\|_{L^2(0,U_{k+1}-U_k)}\leq\ep$ for every $k\in\{0,1,\mathalpha{\dots},m-1\}$. Fix an arbitrary $k\in\{0,1,\mathalpha{\dots},m-1\}$. Note that
\begin{equation*}
	\knorm \fF_0[K_{(U_k,U_{k+1})}]\knorm_{\cV_1(U_k,U_{k+1})}=\underset{t\in(U_k,U_{k+1})}{\mathrm{ess\,sup}}|\fF_0[K](t)|_\op\leq\delta.
\end{equation*}
Furthermore, noting \cref{detkernel_lemm_kernel}, we see that
\begin{equation*}
	\knorm \fF_n[K_{(U_k,U_{k+1})}]\knorm_{\cV_{n+1}(U_k,U_{k+1})}\leq\|a\|^n_{L^2(0,U_{k+1}-U_k)}\leq\ep^n
\end{equation*}
for any $n\in\bN$. Thus, we have
\begin{equation*}
	\knorm K_{(U_k,U_{k+1})}\knorm_{\cK_\bF(U_k,U_{k+1})}=\sum^\infty_{n=0}\knorm\fF_n[K_{(U_k,U_{k+1})}]\knorm_{\cV_{n+1}(U_k,U_{k+1})}\leq\delta+\frac{\ep}{1-\ep}<1.
\end{equation*}
By \cref{exist_cor_concate}, we get the assertion.
\end{proof}


\begin{cor}\label{exist_cor_concateestimate2}
Let $K\in\cK_\bF(S,T;\bR^{d\times d})$ be a $\star$-Volterra kernel. Assume that
\begin{equation*}
	\underset{t\in(S,T)}{\mathrm{ess\,sup}}|\fF_0[K](t)|_\op<1
\end{equation*}
and that
\begin{equation*}
	\sup_{n\in\bN}\Bigl\{(n!)^{-\frac{p-2}{2p}}\underset{t\in(S,T)}{\mathrm{ess\,sup}}\Bigl(\int_{\Delta_n(t,T)}|\fF_n[K](s,t)|^p_\op\rd s\Bigr)^{1/p}\Bigr\}^{1/n}<\infty
\end{equation*}
for some $p>2$. Then $K$ has a $\star$-resolvent $R\in\cK_\bF(S,T;\bR^{d\times d})$ on $(S,T)$.
\end{cor}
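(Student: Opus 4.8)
The plan is to reduce the statement to \cref{exist_cor_concate}, following the scheme of the proof of \cref{exist_cor_concateestimate}: it suffices to exhibit a partition $S=U_0<U_1<\mathalpha{\cdots}<U_m=T$ such that $\knorm K_{(U_k,U_{k+1})}\knorm_{\cK_\bF(U_k,U_{k+1})}<1$ for every $k$. I would first set $\delta:=\operatorname{ess\,sup}_{t\in(S,T)}|\fF_0[K](t)|_\op<1$ and let $C$ denote the finite supremum in the hypothesis, so that $\operatorname{ess\,sup}_{t\in(S,T)}\bigl(\int_{\Delta_n(t,T)}|\fF_n[K](s,t)|^p_\op\rd s\bigr)^{1/p}\le C^n(n!)^{\frac{p-2}{2p}}$ for all $n\in\bN$. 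If $C=0$, then $\fF_n[K]=0$ for all $n\ge1$ and $\knorm K\knorm_{\cK_\bF(S,T)}=\delta<1$, so \cref{exist_lemm_<1} applies immediately; thus I may assume $C>0$.

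The key step is an interpolation between $L^2$ and $L^p$ on the simplex $\Delta_n(t,V)$. For a subinterval $(U,V)\subset(S,T)$ and $n\ge1$, using $\Delta_n(t,V)\subset\Delta_n(t,T)$ and nonnegativity of the integrand, H\"{o}lder's inequality with exponents $\tfrac{p}{2}$ and $\tfrac{p}{p-2}$ gives, for a.e.\ $t\in(U,V)$,
\[
	\int_{\Delta_n(t,V)}|\fF_n[K](s,t)|^2_\op\rd s\le\Bigl(\int_{\Delta_n(t,T)}|\fF_n[K](s,t)|^p_\op\rd s\Bigr)^{2/p}\Bigl(\frac{(V-t)^n}{n!}\Bigr)^{\frac{p-2}{p}},
\]
since $(V-t)^n/n!$ is the Lebesgue measure of $\Delta_n(t,V)$. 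Bounding $(V-t)^n\le(V-U)^n$ and combining with the hypothesis, the right-hand side is at most $C^{2n}(n!)^{\frac{p-2}{p}}\cdot\bigl((V-U)^n/n!\bigr)^{\frac{p-2}{p}}=C^{2n}(V-U)^{n\frac{p-2}{p}}$; crucially the factorials cancel. Hence $\knorm\fF_n[K_{(U,V)}]\knorm_{\cV_{n+1}(U,V)}\le C^n(V-U)^{n\beta}$ with $\beta:=\frac{p-2}{2p}>0$, and since also $\knorm\fF_0[K_{(U,V)}]\knorm_{\cV_1(U,V)}\le\delta$, summing the series yields
\[
	\knorm K_{(U,V)}\knorm_{\cK_\bF(U,V)}\le\delta+\sum_{n=1}^\infty\bigl(C(V-U)^\beta\bigr)^n=\delta+\frac{C(V-U)^\beta}{1-C(V-U)^\beta}
\]
whenever $C(V-U)^\beta<1$.

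To conclude, I would choose $\ep\in(0,1)$ small enough that $\delta+\frac{\ep}{1-\ep}<1$ — possible because $\delta<1$ and $\ep/(1-\ep)\to0$ as $\ep\to0$ — and then take any partition $S=U_0<\mathalpha{\cdots}<U_m=T$ with $U_{k+1}-U_k\le(\ep/C)^{1/\beta}$ for all $k$ (a uniform partition of mesh at most $(\ep/C)^{1/\beta}$ does the job). For such a partition $C(U_{k+1}-U_k)^\beta\le\ep<1$, so the bound above gives $\knorm K_{(U_k,U_{k+1})}\knorm_{\cK_\bF(U_k,U_{k+1})}\le\delta+\frac{\ep}{1-\ep}<1$ for every $k$, and \cref{exist_cor_concate} then furnishes a $\star$-resolvent of $K$ on $(S,T)$. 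The only genuinely substantive point — and the one I would expect to be the main obstacle if anything — is the interpolation estimate, together with the observation that the super-factorial growth $(n!)^{\frac{p-2}{2p}}$ permitted in the hypothesis is precisely compensated by the factor $\bigl((V-U)^n/n!\bigr)^{\frac{p-2}{2p}}$ produced by H\"{o}lder's inequality, so that one is left with a convergent geometric series in $(V-U)^\beta$ that can be made $<1-\delta$ on short intervals; everything else is a routine repetition of the argument in \cref{exist_cor_concateestimate}.
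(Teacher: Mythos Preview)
Your proof is correct and follows essentially the same route as the paper's own argument: apply H\"{o}lder's inequality on $\Delta_n(t,V)$ to interpolate between $L^2$ and $L^p$, observe that the factor $(n!)^{\frac{p-2}{2p}}$ permitted by the hypothesis is exactly cancelled by the $(n!)^{-\frac{p-2}{2p}}$ coming from the simplex volume, and then sum the resulting geometric series on a sufficiently fine partition to invoke \cref{exist_cor_concate}. The only differences are cosmetic---you parametrize by first fixing the target ratio $\ep$ and then choosing the mesh $(\ep/C)^{1/\beta}$, whereas the paper fixes the mesh bound $\ep$ directly---and your explicit handling of the degenerate case $C=0$ is a nice touch that the paper omits.
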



\begin{proof}
We denote $\delta:=\underset{t\in(S,T)}{\mathrm{ess\,sup}}|\fF_0[K](t)|_\op<1$ and
\begin{equation*}
	C_p:=\sup_{n\in\bN}\Bigl\{(n!)^{-\frac{p-2}{2p}}\underset{t\in(S,T)}{\mathrm{ess\,sup}}\Bigl(\int_{\Delta_n(t,T)}|\fF_n[K](s,t)|^p_\op\rd s\Bigr)^{1/p}\Bigr\}^{1/n}<\infty.
\end{equation*}
Let $\ep>0$ be a number such that
\begin{equation*}
	C_p\ep^{\frac{(p-2)}{2p}}<1\ \text{and}\ \delta+\frac{C_p\ep^{\frac{(p-2)}{2p}}}{1-C_p\ep^{\frac{(p-2)}{2p}}}<1.
\end{equation*}
Take $\{U_k\}^m_{k=0}$ with $m\in\bN$ such that $S=U_0<U_1<\mathalpha{\cdots}<U_m=T$ and that $U_{k+1}-U_k<\ep$ for every $k\in\{0,\dots,m-1\}$. Let $k\in\{0,\dots,m-1\}$ be fixed. The restricted $\star$-Volterra kernel $K_{(U_k,U_{k+1})}\in\cK_\bF(U_k,U_{k+1};\bR^{d\times d})$ satisfies
\begin{equation*}
	\knorm \fF_0[K_{(U_k,U_{k+1})}]\knorm_{\cV_1(U_k,U_{k+1})}=\underset{t\in(U_k,U_{k+1})}{\mathrm{ess\,sup}}|\fF_0[K](t)|_\op\leq\delta
\end{equation*}
and, by H\"{o}lder's inequality,
\begin{align*}
	&\knorm\fF_n[K_{(U_k,U_{k+1})}]\knorm_{\cV_{n+1}(U_k,U_{k+1})}\\
	&=\underset{t\in(U_k,U_{k+1})}{\mathrm{ess\,sup}}\Bigl(\int_{\Delta_n(t,U_{k+1})}|\fF_n[K](s,t)|^2_\op\rd s\Bigr)^{1/2}\\
	&\leq\Bigl(\frac{(U_{k+1}-U_k)^n}{n!}\Bigr)^{\frac{(p-2)}{2p}}\underset{t\in(S,T)}{\mathrm{ess\,sup}}\Bigl(\int_{\Delta_n(t,T)}|\fF_n[K](s,t)|^p_\op\rd s\Bigr)^{1/p}\\
	&\leq (C_p\ep^{\frac{(p-2)}{2p}})^n.
\end{align*}
for each $n\in\bN$. Thus, we have
\begin{equation*}
	\knorm K_{(U_k,U_{k+1})}\knorm_{\cK_\bF(U_k,U_{k+1})}=\sum^\infty_{n=0}\knorm\fF_n[K_{(U_k,U_{k+1})}]\knorm_{\cV_{n+1}(U_k,U_{k+1})}\leq\delta+\frac{C_p\ep^{\frac{(p-2)}{2p}}}{1-C_p\ep^{\frac{(p-2)}{2p}}}<1.
\end{equation*}
By \cref{exist_cor_concate}, we get the assertion.
\end{proof}


\section*{Acknowledgments}
The author was supported by JSPS KAKENHI Grant Number 22K13958.


\end{document}